\documentclass[12pt]{amsart}

\usepackage{amsmath}
\usepackage{amssymb}
\usepackage{booktabs}
\usepackage{stmaryrd}
\usepackage{url}
\usepackage{longtable}
\usepackage[figuresright]{rotating}

\input xy
\xyoption{all}

\textwidth	480pt
\textheight	640pt
\oddsidemargin=0pt
\evensidemargin=0pt
\topmargin=0pt

\newcommand{\RR}{\mathbb{R}}
\newcommand{\NN}{\mathbb{N}}
\newcommand{\QQ}{\mathbb{Q}}

\newcommand{\IFF}{\Leftrightarrow}

\newcommand{\continuum}{\mathfrak{c}}

\newcommand{\mc}{\mathcal}
\newcommand{\mf}{\mathfrak}
\newcommand{\bez}{\backslash}

\newtheorem{mydef}{Definition}[section]
\newtheorem{myth}{Theorem}[section]
\newtheorem{myfact}{Proposition}[section]
\newtheorem{mylemma}{Lemma}[section]
\newtheorem{mycoro}{Corollary}[section]
\newtheorem{myremark}{Remark}[section]
\newtheorem{problem}{Problem}

\title[Luzin and Sierpi\'nski sets, ...]{Luzin and Sierpi\'nski sets, some nonmeasurable subsets of the plane}
\author{Marcin Michalski} 
\email[Marcin Michalski]{marcin.michalski@matpwr.info}
\author{Szymon \. Zeberski}
\email[Szymon \. Zeberski]{szymon.zeberski@pwr.edu.pl}
\address{Institute of Mathematics and Computer Science, Wroc\l aw University of Technology, Wybrze\. ze Wyspia\'nskiego 27, 50-370 Wroc\l aw, Poland}
\subjclass{Primary 03E50; 28A05; Secondary 03E17; 03E35; 03E75}
\keywords{Luzin set, Sierpi\'nski set, nonmeasurable set, completely nonmeasurable set, Bernstein set, Shoenfield absoluteness theorem} 
\date{}

\begin{document}
\begin{abstract}
In this paper we shall introduce some nonmeasurable and completely nonmeasurable subsets of the plane with various additional properties, e.g. being Hamel basis, intersecting each line in a super Luzin/Sierpi{\'n}ski set. Also some additive properties of Luzin and Sierpi{\'n}ski sets and their generalization, $\mc{I}$-Luzin sets, on the line are investigated. 
\end{abstract}
\maketitle
\footnotetext{\noindent The work of Sz. {\.Z}eberski has been partially financed by NCN means granted by decision DEC-2011/01/B/ST1/01439.}

\section{Introduction and preliminaries}

Although Luzin and Sierpi{\'n}ski sets were introduced in 1913 and 1924 respectively (see \cite{Lu}, \cite{Mahl}, \cite{Sier}) they are still a fruitful topic to study. They are interesting from many points of view, e.g. as small sets in some sense (see \cite{Filip}, \cite{Kha}) or beacuse they provide existence of other types of sets with certain properties (see \cite{Mi2}). Subsets of the plane in the view of Luzin and Sierpi{\'n}ski sets were also considered recently in \cite{Bien}.

Additive properties of reals are of some intrest too. Let us recall a classical example \cite{Erd} and some recent ones - \cite{Ky} and especially \cite{5P}.

Most of the set-theoretic notation is based on \cite{Je}. A useful reference concerning measure and category is \cite{Ox}.

We say that a family $\mc{I}$ is an ideal of sets, if it is closed under unions and taking subsets. We call $\mc{I}$ an $\sigma$-ideal if it is an ideal and it is closed under countable unions.

Let $\mc{I}$ be a $\sigma$-ideal of subsets of $\RR$ ($\RR^2$) and $\mc{B}$ a family of Borel sets. We say that $\mc{I}$:
\begin{itemize}
\item is translation invariant, if for each $x\in\RR$ and $I\in\mc{I}$ we have $x+I\in\mc{I}$,
\item is scale invariant, if for each $x\in\RR$ and $I\in\mc{I}$ we have $xI\in\mc{I}$,
\item has Borel base if $(\forall I\in\mc{I})(\exists B\in\mc{B}\cap\mc{I})(I\subset B)$ (see \cite{5P}),
\item has Steinhause property if $Int(A-B)\neq\emptyset$ for each $A, B\in\mc{B}\bez\mc{I}$ (see also \cite{Balc}).
\end{itemize}
Classical examples of $\sigma$-ideals on $\RR$ ($\RR^2$), meager sets $\mc{M}$ and null sets $\mc{N}$, have all of the above properties. Throughout this paper we assume that $\sigma$-ideal $\mc{I}$ is translation and scale invariant, has  Borel base and  Steinhaus property and is nontrivial i.e. $\RR (\RR^2 \text{ resp.}) \notin \mc{I}$.
\\
For every $A, B\subset\RR$ we define Minkowski sum of $A$ and $B$ as:
$$
A+B=\{a+b:\ a\in A, b\in B\}.
$$
For each $n\in\NN$ we shall denote:
$$
\underbrace{A+\cdots+A}_{n}=\bigoplus^{n}A.
$$
Considering the algebraic structure of $\RR$ and $\RR^{2}$ we treat them as a linear spaces over the rattionals $\QQ$. 

\begin{mydef}
Let $A\subset\RR$. We say that $A$ is:
\begin{itemize}
\item $\mc{I}$-residual if $A$ is a complement of some set $I\in\mc{I}$,
\item $\mc{I}$-nonmeasurable if $A$ doesn't belong to the $\sigma(\mc{B}\cup\mc{I})$,
\item  completely $\mc{I}$-nonmeasurable if $A$ intersects every $\mc{I}$-positive Borel set (that is Borel set that is not a member of $\mc{I}$) and doesn't contain any of them.
\end{itemize}
\end{mydef}

Let us also recall that we say that $A$ is:
\begin{itemize}
\item a Luzin set if $|L|=\continuum$ and every intersection of $L$ and a meager set is countable,
\item a strong Luzin set if A is a Luzin set and every intersection of $A$ and a $\mc{M}$-positive Borel set is uncountable,
\item a Sierpi\'nski set if $|S|=\continuum$ and every intersection of $S$ and a null set is countable,
\item a strong Sierpi\'nski set if A is a Sierpi\'nski set and every intersection of $A$ and a $\mc{N}$-positive Borel set is uncountable,
\item a Bernstein set if for each perfect set $P$ we have $B\cap P\neq \emptyset$ and $B^{c}\cap P\neq \emptyset$.
\end{itemize}
Let us note that a notion of $\mc{I}$-nonmeasurability and complete $\mc{I}$-nonmeasurability agrees with \cite{R}, \cite{RZ} and \cite{Z}.

Under the assumption of CH one may construct Luzin and Sierpi\'nski sets. On the other hand Martin's Axiom and failure of CH imply that every set of cardinality lesser than $\continuum$ is both meager and null, so Luzin and Sierpi\'nski sets don't exist (see \cite{MA}). Furthermore we have:
\begin{myth}\textbf{(Rothberger, 1938)}
CH $\IFF$ Luzin and Sierpi{\'n}ski sets exist.
\end{myth}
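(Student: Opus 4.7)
The plan is to prove both directions separately.

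For CH $\IMP$ existence, under CH one has $\continuum = \aleph_1$ and, since $\mc{M}$ has a Borel base, only $\aleph_1$ Borel meager sets. I would enumerate them as $\{M_\alpha : \alpha < \omega_1\}$ and recursively pick
\[
x_\alpha \in \RR \setminus \Bigl(\bigcup_{\beta \leq \alpha} M_\beta \cup \{x_\beta : \beta < \alpha\}\Bigr).
\]
Such $x_\alpha$ exists because $\mc{M}$ is a $\sigma$-ideal and $\RR \notin \mc{M}$. Setting $L = \{x_\alpha : \alpha < \omega_1\}$ yields a Luzin set: any meager set lies inside some $M_\gamma$ by the Borel base property, so $L \cap M \subseteq \{x_\beta : \beta \leq \gamma\}$ is countable. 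The Sierpi\'nski construction is symmetric, using $\mc{N}$ in place of $\mc{M}$.

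For existence $\IMP$ CH, suppose $L$ and $S$ exist with $|L| = |S| = \continuum$. Two observations come directly from the definitions. First, $\RR$ cannot be covered by fewer than $\continuum$ meager sets: any covering $\RR = \bigcup_{\alpha < \kappa} M_\alpha$ partitions $L$ into countable pieces $L \cap M_\alpha$, forcing $\continuum = |L| \leq \kappa$. Second, there is a non-null set of size $\aleph_1$, namely any $\aleph_1$-sized $S_1 \subseteq S$, because $S_1 \cap N$ is countable for every null $N$, so $S_1$ itself cannot lie in $\mc{N}$.

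The key step linking these observations is the classical Rothberger inequality. I would fix a decomposition $\RR = M \cup N$ with $M$ an $F_\sigma$ meager set and $N$ a $G_\delta$ null set, and argue that for any non-null $X \subseteq \RR$ one has $\RR = \bigcup_{x \in X}(x + M)$: if some $y$ lay outside every $x + M$, then $y - x \in N$ for all $x \in X$, giving $X \subseteq y - N$, which is null by translation invariance and contradicts non-nullity of $X$. Applying this to $X = S_1$ shows $\RR$ is the union of $\aleph_1$ meager sets, which combined with the first observation gives $\continuum \leq \aleph_1$, i.e., CH. The main obstacle is recognizing that the complementary $\RR = M \cup N$ decomposition together with translation invariance of both ideals is exactly what powers this link; once that is in place, the remaining computations are routine bookkeeping.
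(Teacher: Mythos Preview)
Your argument is correct and is precisely the classical Rothberger proof: the forward direction by the standard transfinite construction, the reverse by combining $\continuum\le\mathrm{cov}(\mc{M})$ (from the Luzin set) with $\mathrm{non}(\mc{N})\le\aleph_1$ (from the Sierpi\'nski set) via the inequality $\mathrm{cov}(\mc{M})\le\mathrm{non}(\mc{N})$, which you derive from the $\RR=M\cup N$ decomposition and translation invariance.

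There is nothing to compare against in the paper itself: the authors do not give a proof, they simply cite Miller's handbook article \cite{Mi}. Your sketch is exactly the argument one finds there (and in most expositions), so in effect you have supplied what the paper outsources. One small stylistic remark: in the first observation you should write $\continuum=|L|\le\kappa\cdot\aleph_0=\kappa$ rather than just $\kappa$, to be explicit that the countable pieces could in principle overlap in cardinality bookkeeping; but this is cosmetic and the logic is sound.
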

The proof can be found e.g. in \cite{Mi}.
 
Due to Marczewski partition of $\RR$ into a comeager null set and a meager set that has a full measure (see \cite{Ox}, Corollary 1.7) it's quite easy to check that every Luzin set is a null set and every Sierpi{\'n}ski set is meager. Futhermore we have:

\begin{myfact}
	Every Luzin set does not have  the Baire property and every Sierpi\'nski set is Lebesgue nonmeasurable.
\end{myfact}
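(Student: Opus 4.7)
The plan is to prove both statements by contradiction via a case split, in each case either using the Luzin/Sierpi\'nski condition with a trivial choice of small set, or locating a meager/null set of cardinality $\continuum$ inside the would-be regular set.

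For the Baire property statement, suppose $L$ is a Luzin set with the Baire property, so $L = U \triangle M$ for some open set $U$ and meager $M$. If $U = \emptyset$, then $L = M$ is itself meager, so the Luzin condition applied to $M$ gives that $L \cap M = L$ is countable, contradicting $|L| = \continuum$. Otherwise $U$ contains some nonempty open interval $V$; enlarging $M$ to a meager $F_\sigma$ set $F$, the set $V \setminus F$ is a $G_\delta$ comeager in $V$, hence an uncountable Polish space, and is contained in $L$. By Cantor--Bendixson and the standard embedding of $2^\omega$ into any nonempty perfect Polish space, $V \setminus F$ contains a topological copy $C$ of the Cantor space. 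As a compact totally disconnected subset of $\RR$, $C$ has empty interior, so it is nowhere dense and in particular meager; but $C \subseteq L$ and $|C| = \continuum$, contradicting the Luzin property applied to $C$.

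For the Sierpi\'nski statement the argument is dual with category replaced by Lebesgue measure. Let $S$ be a measurable Sierpi\'nski set. If $\mu(S) = 0$, the Sierpi\'nski condition applied to the null set $S$ itself yields that $S$ is countable, contradicting $|S| = \continuum$. If $\mu(S) > 0$, inner regularity provides a compact set $K \subseteq S$ with $\mu(K) > 0$; since $K$ is uncountable, a standard Cantor-scheme construction inside $K$, with diameters and measures shrinking geometrically at each level, produces a perfect set $C \subseteq K$ with $|C| = \continuum$ and $\mu(C) = 0$. The Sierpi\'nski property applied to the null set $C$ then forces $S \cap C$ to be countable, but $S \cap C = C$ has cardinality $\continuum$, a contradiction.

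The only nontrivial ingredient is the extraction of the ``small'' Cantor set. In the Baire case it is automatic once one has an uncountable $G_\delta$, because compact totally disconnected subsets of $\RR$ are automatically nowhere dense. In the measure case one must explicitly control the measures in the Cantor scheme so that the intersection is null; this is a classical construction and is the main, albeit routine, technical step.
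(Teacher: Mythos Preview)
The paper states this proposition without proof; it is treated as a classical fact, mentioned right after the remark (via the Marczewski decomposition) that every Luzin set is null and every Sierpi\'nski set is meager. So there is no ``paper's own proof'' to compare against.

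Your argument is correct and is the standard one. A couple of minor remarks: in the Baire case you should make explicit (as you implicitly use) that $V\setminus F\subseteq U\setminus M\subseteq L$, and that $V\setminus F$, being dense in $V$, has no isolated points, so that Cantor--Bendixson (or a direct Cantor scheme) indeed yields a copy of $2^\omega$ inside it. In the measure case your sketch of the null Cantor set inside $K$ is fine; the point is simply that in a perfect subset of $\RR$ one can run a Cantor scheme with interval lengths $\le 4^{-n}$ at level $n$, giving total measure $\le 2^{-n}\to 0$. Nothing is missing.
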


There is a nice characterization of strong Luzin sets. Since every Borel set is of the form $U\Delta M$, where $U$ is open and $M$ is meager, a Luzin set is a super Luzin set $\IFF$ it has uncountable intersection with every open interval that has rational endpoints.

\begin{myfact}
Every strong Luzin set is completely $\mc{M}$-nonmeasurable and every strong Sierpi{\'n}ski set is completely $\mc{N}$-nonmeasurable.
\end{myfact}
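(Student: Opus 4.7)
The plan is to verify the two defining clauses of complete nonmeasurability independently; I will describe the Luzin case in detail and indicate the symmetric modification for the Sierpi\'nski case. Complete $\mc{M}$-nonmeasurability of a strong Luzin set $L$ requires (a) that $L$ meets every $\mc{M}$-positive Borel set and (b) that $L$ contains none of them. Part (a) is immediate from the very definition of a \emph{strong} Luzin set, which asserts that $L\cap B$ is uncountable, hence nonempty, for every $\mc{M}$-positive Borel $B$.

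Part (b) is the substantive step. I would argue by contradiction: suppose $B\subseteq L$ is $\mc{M}$-positive Borel. Being non-meager, $B$ is uncountable, so the perfect set theorem for Borel sets yields a topological copy of the Cantor space $2^{\omega}$ inside $B$. Such a copy, realised inside $\RR$, is compact and totally disconnected, hence closed with empty interior, hence nowhere dense; it is therefore a perfect meager set $C$ with $|C|=\continuum$. But $C\subseteq B\subseteq L$ forces $L\cap C=C$ to be uncountable, contradicting the Luzin property $|L\cap M|\leq\aleph_{0}$ for every meager $M$.

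For a strong Sierpi\'nski set $S$ the same skeleton works with $\mc{M}$ replaced by $\mc{N}$. Part (a) is again immediate from the strong Sierpi\'nski definition. For part (b), given an $\mc{N}$-positive Borel $B\subseteq S$, I would first extract a perfect subset of $B$ and then refine it by a Cantor scheme carried out inside that perfect set: at stage $n$, pick two disjoint closed subintervals each meeting the perfect set, with total length below $2^{-n}$. The resulting intersection is a perfect $C\subseteq B$ of cardinality $\continuum$ and measure zero. Then $C\subseteq S$ lies in a null set, so $|S\cap C|=\continuum$ contradicts the Sierpi\'nski property.

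The only genuine obstacle in either half is ensuring that the perfect subset of $B$ lies in the relevant ideal. For Luzin this is automatic, since every embedded copy of $2^{\omega}$ in $\RR$ is nowhere dense; for Sierpi\'nski a one-paragraph Cantor-scheme refinement with shrinking total length achieves measure zero, which is the only computation that really needs to be written out.
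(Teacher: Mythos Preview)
Your argument is correct. Note, however, that the paper does not actually supply a proof of this proposition: it is stated as a known fact and left to the reader. Your write-up therefore fills a gap rather than paralleling an existing argument.

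One small streamlining you might consider: the same mechanism handles both (b)-clauses at once if you phrase it as ``no Luzin (resp.\ Sierpi\'nski) set contains a perfect set''. Indeed, any perfect $P\subseteq\RR$ contains a Cantor-type subset $C$ that is simultaneously nowhere dense and Lebesgue null, so $C\subseteq L$ (or $C\subseteq S$) already contradicts the defining smallness condition. Since every uncountable Borel set contains a perfect set, neither $L$ nor $S$ can contain any uncountable Borel set, and in particular none that is $\mc{M}$- or $\mc{N}$-positive. This avoids running the Cantor-scheme refinement separately in the Sierpi\'nski half, but your version is perfectly fine as written.
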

We shall consider some generalization of Luzin and Sierpi\'nski sets. 

\begin{mydef}
We say that $L$ is an $\mc{I}$-Luzin set if $|L|=\continuum$ and for every $I\in\mc{I}$ a set $L\cap I$ is countable. Analogously to previous definitions, we call $L$ a strong $\mc{I}$-Luzin set if $L$ is an $\mc{I}$-Luzin and its intersection with every Borel $\mc{I}$-positive set is uncountable.
\end{mydef}

Such notions were considered e.g. in \cite{RaZe}, \cite{Buk}.

We shall provide a way to achieve strong $\mc{I}$-Luzin set with minimal assumptions, but first we need the following proposition:
\begin{myfact}
\label{oi-rezydualnym}
Let $B$ be a Borel $\mc{I}$-positive set and let $D$ be a countable dense set. Then $B+D$ is an $\mc{I}$-residual set.
\end{myfact}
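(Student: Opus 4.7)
My plan is to prove this by contradiction using the Steinhaus property. First, I observe that $B+D = \bigcup_{d\in D}(B+d)$ is a countable union of Borel sets (translates of $B$), hence $B+D$ is Borel, and so $C := \RR\setminus(B+D)$ is Borel as well. Since $\mc{I}$ has a Borel base, to show $C \in \mc{I}$ it suffices to rule out the possibility that $C$ is $\mc{I}$-positive.

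Suppose, toward a contradiction, that $C \notin \mc{I}$. Then $C$ is a Borel $\mc{I}$-positive set, and by hypothesis $B$ is also Borel $\mc{I}$-positive. Applying the Steinhaus property to the pair $(C,B)$, the set $\mathrm{Int}(C-B)$ is a nonempty open subset of $\RR$.

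Since $D$ is dense, there exists $d \in D \cap \mathrm{Int}(C-B)$. By definition of $C-B$, we can write $d = c - b$ for some $c \in C$ and $b \in B$, which gives $c = b + d \in B+D$. But $c \in C = \RR \setminus (B+D)$, a contradiction. Hence $C \in \mc{I}$ and $B+D$ is $\mc{I}$-residual.

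I expect no real obstacle here; the main subtlety is getting the order of the Steinhaus difference right. If one naively applies Steinhaus to $(B,C)$ one gets an open interval inside $B-C$, and a dense point $d = b-c$ produces $c = b - d$, which only shows $c \in B + (-D)$; this is harmless when $D = -D$ but not in general. Using $\mathrm{Int}(C-B)$ instead makes the argument work for any countable dense $D$, and the translation/scale invariance of $\mc{I}$ is not actually needed for this particular proposition.
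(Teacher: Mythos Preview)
Your proof is correct and follows essentially the same route as the paper's: assume the complement is Borel and $\mc{I}$-positive, invoke the Steinhaus property, and use the density of $D$ to reach a contradiction. The only cosmetic difference is that the paper applies Steinhaus to $B-A$ and then uses $D+I=\RR$ to force $0\in (D+B)-A$, which unwinds to exactly the same contradiction $a=b+d\in B+D$ that you obtain directly from $d\in C-B$.
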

\begin{proof}
Suppose that a set $B+D$ is not $\mc{I}$-residual. Then A=$(B+D)^c$ is Borel and doesn't belong to $\mc{I}$. By the Steinhaus property there exists an interval $I\subset(B-A)$ and:
$$
\RR=D+I\subset D+(B-A)\subset (D+B)-A,
$$
what brings a contradiction, because $0\notin(D+B)-A$.

\end{proof}

\begin{mylemma}
\label{superiluzinq}
Let $L$ be a $\mc{I}$-Luzin set. Then $L+\QQ$ is a strong $\mc{I}$-Luzin set.
\end{mylemma}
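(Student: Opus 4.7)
The plan is to verify directly that $L+\QQ$ meets both requirements in the definition of a strong $\mc{I}$-Luzin set: that it is itself an $\mc{I}$-Luzin set, and that it meets every Borel $\mc{I}$-positive set in an uncountable set.

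For the first requirement, the cardinality is immediate since $L\subseteq L+\QQ$. Given any $I\in\mc{I}$, I would decompose
$$
(L+\QQ)\cap I=\bigcup_{q\in\QQ}\bigl((L+q)\cap I\bigr)=\bigcup_{q\in\QQ}\bigl((L\cap(I-q))+q\bigr),
$$
and invoke translation invariance of $\mc{I}$ to conclude $I-q\in\mc{I}$ for every $q\in\QQ$. Then each $L\cap(I-q)$ is countable by the $\mc{I}$-Luzin property of $L$, and a countable union of countable sets is countable.

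For the strong property, fix a Borel $\mc{I}$-positive set $B$. Applying Proposition~\ref{oi-rezydualnym} with the countable dense set $D=\QQ$ yields that $B+\QQ$ is $\mc{I}$-residual. Since $-\QQ=\QQ$, this means $B-\QQ$ has complement in $\mc{I}$. By the $\mc{I}$-Luzin property, $L\cap(B-\QQ)^c$ is countable, while $|L|=\cm$; hence $L\cap(B-\QQ)$ is of cardinality $\cm$, in particular uncountable.

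It remains to transport this uncountability into $(L+\QQ)\cap B$. For each $\ell\in L\cap(B-\QQ)$, choose a rational $q_\ell$ with $\ell+q_\ell\in B$ and define $\phi(\ell)=\ell+q_\ell$. The range of $\phi$ is contained in $(L+\QQ)\cap B$, while for each $x$ in the range the fibre $\phi^{-1}(x)$ is contained in $x-\QQ$, hence countable. An uncountable set cannot be covered by countably many countable fibres, so the range of $\phi$ is uncountable, and $(L+\QQ)\cap B$ is uncountable, as required. The only mild subtlety is spotting that Proposition~\ref{oi-rezydualnym} is exactly what converts the $\mc{I}$-Luzin hypothesis into a statement about Borel $\mc{I}$-positive sets; everything else is essentially bookkeeping using translation invariance and a countable-to-one map argument.
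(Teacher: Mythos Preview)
Your proof is correct and follows essentially the same approach as the paper: both invoke Proposition~\ref{oi-rezydualnym} to get $B+\QQ$ $\mc{I}$-residual, then use a countable-to-one correspondence (the paper via the equivalence relation $l\sim l'\IFF l-l'\in\QQ$, you via the explicit map $\phi$ with countable fibres) to pass from $|L\cap(B-\QQ)|=\cm$ to $|(L+\QQ)\cap B|$ uncountable. Your treatment of the first requirement is more explicit than the paper's one-line ``countable union of $\mc{I}$-Luzin sets'', but amounts to the same computation.
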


\begin{proof}
$L+\QQ$ is a $\mc{I}$-Luzin set as a countable union of $\mc{I}$-Luzin sets. Let $B$ be a Borel $\mc{I}$-positive set. By Proposition \ref{oi-rezydualnym}  $B+\QQ$ is an $\mc{I}$-residual set and we may assume w.l.g. that $L\subset (B+\QQ)$. Then for each $l\in L$ exist $b\in B$ and $q\in \QQ$ that $l=b+q$ and eventually $b=l-q$. Let us denote an equivalence relation $\sim$ over $L$:
$$
(\forall l, l'\in L)(l\sim l'\IFF l-l'\in\QQ)
$$
If for some $l, l'\in L$, $b\in B$ and $q, q'\in\QQ$ we have $b=l-q=l'-q'$ then $l-l'=q-q'$ and therefore $l\sim l'$. There are $\continuum$ many equivalence classes and therefore $|(L+\QQ)\cap B|=\continuum$.

\end{proof}

We shall end the preliminaries with the following theorem:
\begin{myth}[CH]
	\label{partycjailuzinych}
	 There exists a partition of $\RR$ into $\mf{c}$ many strong $\mc{I}$-Luzin sets.
\end{myth}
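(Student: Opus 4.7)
The plan is a transfinite recursion of length $\omega_1 = \mf{c}$, building the $\omega_1$ many pieces of the partition simultaneously. Under CH I would fix enumerations $\{B_\alpha : \alpha < \omega_1\}$ of the Borel sets in $\mc{I}$, arranged so that $\alpha \leq \beta$ implies $B_\alpha \subseteq B_\beta$; $\{P_\alpha : \alpha < \omega_1\}$ of the Borel $\mc{I}$-positive sets; and $\RR = \{r_\alpha : \alpha < \omega_1\}$. A preliminary observation I would use throughout: $\{0\} \in \mc{I}$, since otherwise $\{0\}$ would be a Borel set outside $\mc{I}$ and the Steinhaus property would force $Int(\{0\} - \{0\}) = Int\{0\} \neq \emptyset$. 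By translation invariance and the $\sigma$-ideal property, every countable subset of $\RR$ therefore belongs to $\mc{I}$.

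At stage $\alpha < \omega_1$, for every pair $(\xi, \beta)$ with $\xi, \beta \leq \alpha$ I would pick a fresh point
$$
y_{\xi,\beta,\alpha} \in P_\beta \setminus (B_\alpha \cup U_\alpha),
$$
where $U_\alpha$ consists of all points used at earlier stages together with the finitely many used so far at stage $\alpha$, and declare $y_{\xi,\beta,\alpha} \in L_\xi$. If $r_\alpha$ has not yet been placed into some $L_\xi$, I put it into $L_0$. The choice is possible: $U_\alpha$ is countable and $B_\alpha \in \mc{I}$, so $B_\alpha \cup U_\alpha \in \mc{I}$; enclosing this union in a Borel member of $\mc{I}$ (by the Borel base) shows that $P_\beta \setminus (B_\alpha \cup U_\alpha)$ contains a Borel $\mc{I}$-positive set, which by the perfect set theorem has cardinality $\mf{c}$.

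Setting $L_\xi$ equal to the set eventually collected at coordinate $\xi$, the family $\{L_\xi : \xi < \omega_1\}$ is pairwise disjoint by the freshness condition and covers $\RR$ because every $r_\alpha$ is placed by stage $\alpha$. Each $L_\xi$ is $\mc{I}$-Luzin: any $I \in \mc{I}$ lies inside some $B_{\alpha_0}$, and by monotonicity of the enumeration no $y_{\xi,\beta,\gamma}$ with $\gamma \geq \alpha_0$ belongs to $B_{\alpha_0}$, so $L_\xi \cap I$ reduces to the countably many points placed at stages $\gamma < \alpha_0$ plus the at most countably many $r_\gamma$'s dropped into $L_0$. Each $L_\xi$ is moreover strong $\mc{I}$-Luzin, because for any Borel $\mc{I}$-positive $P = P_{\beta_0}$ the family $\{y_{\xi,\beta_0,\gamma} : \gamma \geq \max(\xi, \beta_0)\}$ supplies $\aleph_1$ distinct elements of $L_\xi \cap P$.

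The only mildly delicate step is verifying the nonemptiness of the candidate pool $P_\beta \setminus (B_\alpha \cup U_\alpha)$ at every stage, which hinges on three ingredients already noted: countable sets belong to $\mc{I}$ (derived from Steinhaus), $\mc{I}$ is a $\sigma$-ideal, and every Borel $\mc{I}$-positive set has cardinality $\mf{c}$. Once these are secured, the rest is routine transfinite bookkeeping.
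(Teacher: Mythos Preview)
Your transfinite recursion and the verification for $\xi\ge 1$ are essentially the paper's argument, but there is a genuine gap in the treatment of $L_0$. You route every unplaced $r_\alpha$ into $L_0$ and then claim these contribute only ``at most countably many'' points. That is not justified, and in fact it is false. Observe that every chosen point $y_{\xi,\beta,\gamma}$ avoids $B_\gamma$; by monotonicity of your enumeration, for any fixed $\alpha_0$ only the countably many $y$-points with $\gamma<\alpha_0$ can lie in $B_{\alpha_0}$. Now the standing hypotheses force $\mc{I}$ to contain an uncountable Borel set $D$ (otherwise $\mc{I}$ would coincide with the ideal of countable sets, which fails the Steinhaus property: take a thin compact perfect $P$ with $P-P$ of Lebesgue measure zero, hence with empty interior). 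Pick $\alpha_0$ with $D\subseteq B_{\alpha_0}$; then all but countably many points of $D$ are never selected as some $y_{\xi,\beta,\gamma}$, so they are leftovers and all get dumped into $L_0$. Hence $|L_0\cap D|=\mf{c}$ and $L_0$ is not $\mc{I}$-Luzin.

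The fix is precisely what the paper does: rather than sending all leftovers to a single piece, distribute them so that each $L_\xi$ absorbs at most one extra point (for instance, after the recursion put $r_\xi$ into $L_\xi$ whenever $r_\xi$ is not already covered). Adjoining one point to a strong $\mc{I}$-Luzin set clearly preserves the property. A minor side remark: a literal ``increasing enumeration of $\mc{B}\cap\mc{I}$'' does not exist, but replacing an arbitrary listing $\{A_\alpha\}$ by the cumulative unions $B_\alpha=\bigcup_{\gamma\le\alpha}A_\gamma$ gives an increasing sequence cofinal in $\mc{B}\cap\mc{I}$, which is all your argument actually needs.
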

\begin{proof}
	Let $\{I_{\alpha}: \alpha<\mf{c}\}$ be an enumeration of $\mc{B}\cap\mc{I}$ and $\{B_{\alpha}: \alpha<\mf{c}\}$ be an enumeration of $\mc{I}$-positive Borel sets. We shall construct a required partition inductively. At a step $\xi<\continuum$ we choose a sequence $\langle x_{\xi,\zeta,\eta}: \zeta\leq\xi, \eta\leq\xi\rangle$ such that for all $\zeta,\eta\leq\xi$:
$$
x_{\xi,\zeta,\eta}\in B_{\eta}\bez(X_{\xi,\zeta,\eta}\cup\bigcup_{\alpha\leq\xi}I_{\alpha}),
$$
where:
$$
X_{\xi,\zeta,\eta}=\{ x_{\gamma,\beta,\alpha}: \gamma<\xi, \beta\leq\gamma, \alpha\leq\gamma\}\cup\{ x_{\xi,\beta,\alpha}: \beta<\zeta, \alpha\leq\xi\}\cup\{ x_{\xi,\zeta,\alpha}: \alpha<\eta\},
$$
so $X$ is a set of all pointes picked till now. Since:
$$
X_{\xi,\zeta,\eta}\cup\bigcup_{\alpha\leq\xi}I_{\alpha}\in\mc{I}
$$
the choice is always viable. This finishes the cosntruction and for each $\zeta<\continuum$ we set:
$$
L'_{\zeta}=\bigcup_{\xi\geq\zeta}\{x_{\xi,\zeta,\alpha}: \alpha\leq\xi\}.
$$
A family $\{L'_{\alpha}: \alpha<\mf{c}\}$ consists of pairwise disjoint strong $\mc{I}$-Luzin sets. To receive a partition let us enumerate $\RR=\{r_{\alpha}: \alpha<\continuum\}$ and define for each $\xi<\continuum$:
$$
L_{\xi} = \left\{ \begin{array}{ll}
 L'_{\xi} & \textnormal{, if $r_{\xi}\in\bigcup_{\alpha}L'_{\alpha}$.}\\
 L'_{\xi}\cup\{r_{\xi}\} & \textnormal{in the other case.}
 \end{array} \right.
$$
Then $\{L_{\alpha}: \alpha<\continuum\}$ is the partition. 

\end{proof}

\section{Results}

Results will be divided into two groups. First, we will construct subsets of the plane with some additional properties. Second, we will focus on subsets of the line with additional properties involving the algebraic structure of $\RR.$

\subsection{Subsets of the plane}
Let us start with the following observation: 

\begin{myth}[CH]
There exists a set $A\subset\RR^{2}$ such that each horizontal slice $A^{y}$ is a strong $\mc{I}$-Luzin set and each vertical slice $A_{x}$ is a cocountable set. Such a set is $\mc{M}$ and $\mc{N}$-nonmeasurable. Moreover, in the case $\mc{I}=\mc{M}$, $A$ is completely $\mc{M}$-nonmeasurable, and in the case $\mc{I}=\mc{N}$, $A$ is completely $\mc{N}$-nonmeasurable.
\end{myth}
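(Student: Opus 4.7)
The plan is to construct $A$ row by row under CH. Enumerate $\RR = \{r_\alpha : \alpha < \omega_1\}$ and at each stage $\alpha < \omega_1$ build a strong $\mc{I}$-Luzin set $L_\alpha \subseteq \RR$ containing the countable initial segment $\{r_\beta : \beta < \alpha\}$. This is a direct modification of the recursion used in Theorem~\ref{partycjailuzinych}: the Steinhaus property applied to $\{0\}$ forces $\{0\} \in \mc{I}$, so every countable set lies in $\mc{I}$, and at stage $\alpha$ one may first adjoin the countable set $\{r_\beta : \beta < \alpha\}$ and then continue the usual diagonal selection of points meeting every $\mc{I}$-positive Borel set uncountably while avoiding every previously enumerated Borel set in $\mc{I}$. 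Finally put
$$A := \bigcup_{\alpha < \omega_1} L_\alpha \times \{r_\alpha\}.$$

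The slice conditions are immediate from the construction: $A^{r_\alpha} = L_\alpha$ is strong $\mc{I}$-Luzin, and for $x = r_\beta$ we have $A_{r_\beta} = \{r_\alpha : r_\beta \in L_\alpha\} \supseteq \{r_\alpha : \beta < \alpha\}$, which is cocountable.

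For $\mc{N}$-nonmeasurability I would apply Fubini to $A \cap [0,1]^2$: the cocountable columns yield $\int_0^1 \mu(A_x \cap [0,1])\, dx = 1$, while the horizontal slices force a conflicting value. In the case $\mc{I} = \mc{M}$ each $L_\alpha$ is Luzin and hence null by the Marczewski partition remark preceding Proposition 1.1, giving the row integral $0$; in the case $\mc{I} = \mc{N}$ each $L_\alpha$ is a Sierpi\'nski set and hence Lebesgue non-measurable (Proposition 1.1), contradicting Fubini's conclusion that almost every horizontal section of a measurable set be measurable. The $\mc{M}$-nonmeasurability is the Kuratowski--Ulam mirror: if $A$ had the Baire property, the cocountable columns would force $A$ (and the associated open $U$ in $A = U \triangle M$, $M$ Borel meager) to be comeager, so for comeager $y$ the section $A^y$ contains $U^y \setminus M^y$, a nonempty open set minus a Borel meager set, which is an $\mc{M}$-positive Borel subset of $\RR$. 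This contradicts either the complete $\mc{M}$-nonmeasurability of $L_\alpha$ (Proposition 1.2, case $\mc{I} = \mc{M}$) or the meagerness of $L_\alpha$ (Sierpi\'nski case, $\mc{I} = \mc{N}$), since a meager set cannot contain an $\mc{M}$-positive Borel set.

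The moreover clauses strengthen the same slicing arguments to complete nonmeasurability. For $\mc{I} = \mc{M}$, given any $\mc{M}$-positive Borel set $C \subseteq \RR^2$, Kuratowski--Ulam supplies comeager-many $y$ with $C^y$ an $\mc{M}$-positive Borel subset of $\RR$; since $L_\alpha$ is completely $\mc{M}$-nonmeasurable (Proposition 1.2), both $L_\alpha \cap C^y$ and $(\RR \setminus L_\alpha) \cap C^y$ are nonempty, giving $A \cap C \neq \emptyset$ and $(\RR^2 \setminus A) \cap C \neq \emptyset$. The case $\mc{I} = \mc{N}$ is symmetric, using Fubini and the complete $\mc{N}$-nonmeasurability of strong Sierpi\'nski sets. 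The main delicacy will be establishing the $\mc{M}$- and $\mc{N}$-nonmeasurability uniformly for any $\mc{I}$ satisfying the standing hypotheses; this is the step where, beyond the two distinguished cases $\mc{I}=\mc{M},\mc{N}$, one typically enhances the recursion of $L_\alpha$ in Bernstein style to also guarantee complete $\mc{M}$- and $\mc{N}$-nonmeasurability, a strengthening compatible with the strong $\mc{I}$-Luzin condition via simultaneous diagonal enumeration of all three families of Borel positive sets.
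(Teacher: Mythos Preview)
Your construction is correct and rests on the same core idea as the paper's: make each horizontal slice a strong $\mc{I}$-Luzin set that contains the countable initial segment $\{r_\beta:\beta<\alpha\}$, so that the vertical slices are automatically cocountable. The execution, however, differs. You run a fresh CH recursion at every level $\alpha$ to manufacture a new $L_\alpha$; the paper avoids this entirely by fixing a single strong $\mc{I}$-Luzin set $L$ (from Lemma~\ref{superiluzinq}) and setting
\[
A=\underbrace{\bigcup_{\alpha<\continuum}\{r_\alpha\}\times\{r_\beta:\beta>\alpha\}}_{X}\ \cup\ (L\times\RR),
\]
so that $A^{r_\beta}=L\cup\{r_\alpha:\alpha<\beta\}$ and $A_{r_\alpha}\supseteq\{r_\beta:\beta>\alpha\}$. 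This is strictly simpler: one Luzin set, no $\omega_1$-length outer loop, and the ``initial segment'' trick is delivered by the staircase set $X$ rather than woven into each $L_\alpha$.

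On the nonmeasurability side, you give considerably more detail than the paper, which just invokes Fubini and Kuratowski--Ulam. Your case analysis for $\mc{I}\in\{\mc{M},\mc{N}\}$ is exactly right, and you correctly flag the delicate point for general $\mc{I}$: nothing in the standing hypotheses immediately tells you that a strong $\mc{I}$-Luzin set is Lebesgue-nonmeasurable or lacks the Baire property. Your proposed fix (diagonalising simultaneously over $\mc{I}$-, $\mc{M}$- and $\mc{N}$-positive Borel sets during the construction of $L_\alpha$) works and is the natural way to close that gap; the paper's terse proof does not address it explicitly.
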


\begin{proof}
Let $L$ be strong $\mc{I}$-Luzin set (obtained by Lemma \ref{superiluzinq}). Let us define:
\begin{eqnarray*}
X & = & \bigcup_{\alpha<\continuum}\{r_{\alpha}\}\times\{r_{\beta}: \beta>\alpha\}
\\
Y & = & L\times \RR.
\end{eqnarray*}
Then $A=X\cup Y$ is the set. Nonmeasurability follows Fubini and Kuratowski-Ulam theorems.

\end{proof}

\begin{myth}[CH]
There exists a set $A\subset\RR^{2}$ such that each vertical slice $A_{x}$ is cocountable and $A$ is completely $\mc{M}$, $\mc{N}$-nonmeasurable.  
\end{myth}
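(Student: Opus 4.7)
The plan is to write $A=\RR^{2}\setminus C$ where $C$ has at most one point in each vertical line, so that every slice $A_{x}$ is cocountable automatically. With this setup, the ``$A$ intersects every positive Borel set'' half of complete nonmeasurability comes for free: given an $\mc{M}$-positive (respectively $\mc{N}$-positive) Borel set $B\subset\RR^{2}$, the Kuratowski--Ulam theorem (respectively Fubini) yields that $\{x:B_{x}\notin\mc{M}(\RR)\}$ (respectively $\{x:B_{x}\notin\mc{N}(\RR)\}$) is non-meager (respectively of positive measure), hence uncountable; for any such $x$ the slice $B_{x}$ is uncountable while $A_{x}$ is cocountable, so $B\cap A\neq\emptyset$. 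The remaining task is to arrange that $C$ itself meets every $\mc{M}$-positive and every $\mc{N}$-positive Borel set in $\RR^{2}$; this is exactly what will guarantee that $A$ contains no such set.

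Under CH I would enumerate $\{B_{\alpha}:\alpha<\continuum\}$ as a joint list of all Borel subsets of $\RR^{2}$ that are either $\mc{M}$-positive or $\mc{N}$-positive, and construct $C=\{(x_{\alpha},y_{\alpha}):\alpha<\continuum\}$ by recursion. At stage $\alpha$, the previously chosen abscissae $\{x_{\beta}:\beta<\alpha\}$ form a countable set. Depending on whether $B_{\alpha}\notin\mc{M}$ or $B_{\alpha}\notin\mc{N}$, Kuratowski--Ulam or Fubini produces an uncountable set of $x$ with $(B_{\alpha})_{x}\neq\emptyset$ (indeed a non-meager or positive-measure one); pick a fresh $x_{\alpha}$ from this set and any $y_{\alpha}\in(B_{\alpha})_{x_{\alpha}}$. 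This closes the inductive step, and by construction $C$ meets each $B_{\alpha}$ while keeping every $C_{x}$ at most a singleton, so $A=\RR^{2}\setminus C$ has the required cocountable vertical slices.

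The main obstacle is the cardinality bookkeeping that makes the recursion go through: the set of already-used $x$-coordinates at stage $\alpha$ must remain strictly smaller than the set of $x$'s yielding a non-empty slice of $B_{\alpha}$. This is precisely where CH enters --- it keeps the used set countable at every stage $\alpha<\continuum$, while the set of usable $x$'s, being non-meager or of positive outer measure, has cardinality $\continuum$. Once the recursion is completed, the verification splits into the ``for free'' part sketched in the first paragraph and the ``$C$ meets $B_{\alpha}$'' property built into the construction, both of which are immediate.
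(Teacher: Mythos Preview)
Your argument is correct and follows essentially the same strategy as the paper: build $A$ with cocountable vertical slices so that the ``$A$ meets every positive Borel set'' direction is automatic via Fubini/Kuratowski--Ulam, and use a transfinite recursion under CH picking one point of each positive Borel set on a fresh vertical line so that $A$ contains no positive Borel set. The only difference is cosmetic --- the paper first passes to an auxiliary set $A'=\bigcup_{\alpha}\{r_{\alpha}\}\times\{r_{\beta}:\beta\ge\alpha\}$ and then removes the chosen points, whereas you start directly from $\RR^{2}$; your version is in fact slightly cleaner, yielding co-singleton rather than merely cocountable slices.
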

\begin{proof}
Let $\{r_{\alpha}: \alpha<\continuum\}$ be an enumeration of $\RR$, $\{B_{\alpha}: \alpha<\continuum\}$ an enumeration of planar Borel non-meager sets  and $\{L_{\alpha}: \alpha<\continuum\}$ an enumeration of Borel planar sets of positive measure. Let us define:
$$
A'=\bigcup_{\alpha<\continuum}(\{r_{\alpha}\}\times\{r_{\beta}:\beta\geq\alpha\}).
$$
Then let us choose sequences $\langle b_{\alpha}: \alpha<\continuum\rangle$ and $\langle l_{\alpha}: \alpha<\continuum\rangle$ such that for each $\xi<\continuum$:
\begin{eqnarray*}
l_{\xi}&\in&A'\cap L_{\xi}\bez(\bigcup_{\alpha<\xi}(\{l_{\alpha,x}\}\times\RR)),
\\
b_{\xi}&\in&A'\cap B_{\xi}\bez(\bigcup_{\alpha<\xi}(\{b_{\alpha,x}\}\times\RR)),
\end{eqnarray*}
where $l_{\alpha,x}$ and $b_{\alpha,x}$ states for X-coordinate of $l_{\alpha}$ and $b_{\alpha}$. It follows from Fubini and Kuratowski-Ulam theorems that there are uncountably many uncountable vertical slices of $L_{\xi}$ and $B_{\xi}$, so such a choice can be made. This finishes the construction of the sequences and let us denote:
$$
A=A'\bez(\{b_{\alpha}: \alpha<\continuum\}\cup\{l_{\alpha}: \alpha<\continuum\}).
$$
Since in this way we remove at most 2 points from each vertical slice, $A$ is the set.

\end{proof}

\begin{myth}[CH]
There exists a set $A\subset \RR^{2}$ such that each horizontal  slice $A^{y}$ is a strong Luzin set and each vertical slice $A_{x}$ is strong Sierpi\'nski set. Moreover, $A$ is completely $\mc{M}$- and $\mc{N}$-nonmeasurable.
\end{myth}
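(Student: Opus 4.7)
The plan is to carry out a transfinite induction of length $\continuum$, in the same spirit as the proof of Theorem~\ref{partycjailuzinych}. Under CH we enumerate the non-meager planar Borel sets as $\{B_\xi:\xi<\continuum\}$, the positive-measure planar Borel sets as $\{L_\xi:\xi<\continuum\}$; on the line we enumerate the $\mc{M}$-positive and $\mc{N}$-positive Borel sets as $\{U_\xi\}$ and $\{V_\xi\}$, and the meager and null Borel sets as $\{M_\xi\}$ and $\{N_\xi\}$; finally let $\{r_\xi:\xi<\continuum\}$ enumerate $\RR$. We will build disjoint countable sets $A^+_\xi, A^-_\xi\subset\RR^2$ growing through the stages; the final set is $A=\bigcup_\xi A^+_\xi$, while $\bigcup_\xi A^-_\xi\subset A^c$ will force complete nonmeasurability.

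At stage $\xi$ we perform four bundles of tasks. For each $\eta,\zeta\leq\xi$ a \emph{row task}: pick $x_{\xi,\eta,\zeta}\in U_\zeta$ avoiding $\bigcup_{\alpha\leq\xi}M_\alpha$, all $x$-coordinates used by earlier row tasks, and $\{x:(x,r_\eta)\in A^-_\xi\}$; then add $(x_{\xi,\eta,\zeta},r_\eta)$ to $A^+$. Dually, for each $\eta,\zeta\leq\xi$ a \emph{column task}: pick $y_{\xi,\eta,\zeta}\in V_\zeta$ avoiding $\bigcup_{\beta\leq\xi}N_\beta$, all $y$-coordinates used by earlier column tasks, and $\{y:(r_\eta,y)\in A^-_\xi\}$; then add $(r_\eta,y_{\xi,\eta,\zeta})$ to $A^+$. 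Finally, pick $b_\xi\in B_\xi\setminus A^+_\xi$ and $l_\xi\in L_\xi\setminus A^+_\xi$ and place both into $A^-$. At each step we subtract a meager or null set together with a countable set from an $\mc{M}$- or $\mc{N}$-positive Borel set, so the remainder is uncountable and every pick is feasible.

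For the verification, each row $A^{r_\eta}$ acquires $\continuum$ distinct points of $U_\zeta$ across the stages $\xi\geq\max(\eta,\zeta)$, while only stages $\xi<\alpha$ can contribute to $A^{r_\eta}\cap M_\alpha$, yielding a countable intersection --- so $A^{r_\eta}$ is strong Luzin; columns are handled symmetrically. For planar complete $\mc{M}$-nonmeasurability, Kuratowski--Ulam supplies uncountably many $y$ with $B_\xi^y$ a non-meager Borel subset of $\RR$, and each such section meets the strong Luzin row $A^y$ in uncountably many points, so $A\cap B_\xi$ is uncountable. Fubini gives the parallel statement for $L_\xi$ via the strong Sierpi\'nski columns, and the explicit witnesses $b_\xi,l_\xi$ place $A^c$ into every non-meager and positive-measure planar Borel set.

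The main obstacle will be the interaction between row and column tasks: inserting $(x,r_\eta)$ into $A$ via a row task simultaneously puts $r_\eta$ into the column $A_x$, and $r_\eta$ may lie in arbitrarily many null Borel sets, a priori threatening the strong Sierpi\'nski property of $A_x$. The resolution is the freshness clause --- each $x$-coordinate is used by at most one row task, and symmetrically for $y$-coordinates in column tasks --- so each column receives at most one stray point from the row construction (and each row at most one from the column construction), a single extra element that does not disturb the strong Sierpi\'nski or strong Luzin properties. This freshness plays the same role as the disjointness device $X_{\xi,\zeta,\eta}$ in the proof of Theorem~\ref{partycjailuzinych}.
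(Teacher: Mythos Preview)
Your construction works, but it takes a markedly different and more laborious route than the paper.  The paper's argument is a two-line application of Theorem~\ref{partycjailuzinych}: take partitions $\{L_\alpha:\alpha<\continuum\}$ and $\{S_\alpha:\alpha<\continuum\}$ of $\RR$ into strong Luzin and strong Sierpi\'nski sets respectively, and set $A=\bigcup_{\alpha<\continuum}(L_\alpha\times S_\alpha)$.  Every horizontal slice $A^y$ is then literally one of the $L_\alpha$, and every vertical slice $A_x$ is one of the $S_\alpha$; complete $\mc{M}$- and $\mc{N}$-nonmeasurability drops out of Kuratowski--Ulam and Fubini with no further bookkeeping.  Your direct transfinite induction re-proves the effect of those partitions from scratch inside the plane, which is correct and self-contained but substantially heavier.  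Note also that your $A^-$ device (the witnesses $b_\xi,l_\xi$) is redundant: once every $A^y$ is a strong Luzin set, it is completely $\mc{M}$-nonmeasurable on the line, so for any non-meager planar Borel $B$ and any $y$ with $B^y$ non-meager, both $A^y\cap B^y$ and $(A^y)^c\cap B^y$ are nonempty; the same goes for $\mc{N}$ via columns and Fubini.  Dropping $A^-$ would streamline your argument considerably.
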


\begin{proof}
Let $\{L_{\alpha}: \alpha<\continuum\}$ and $\{S_{\alpha}: \alpha<\continuum\}$ be a partition of $\RR$ into strong Luzin sets and strong Sierpi\'nski sets respectively (from Theorem \ref{partycjailuzinych}). Let us set:
$$
A=\bigcup_{\alpha<\continuum}(L_{\alpha}\times S_{\alpha}).
$$
Then $A$ is the set. Complete nonmeasurability follows from Fubini and Kuratowski-Ulam theorems.

\end{proof}

Now, we shall focus on subsets of the plane involving in their constructions Luzin sets. Let us start with some preparation.
\begin{myfact}
	Let $h: X\rightarrow Y$ be a homeomorphism. Then $L$ is a Luzin set in $X$ $\IFF$ $h(L)$ is a Luzin set in $Y$. 
\end{myfact}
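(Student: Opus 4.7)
The plan is to reduce the statement to the single fact that homeomorphisms preserve the class of meager sets, and then exploit the symmetry between $h$ and $h^{-1}$. Since $h$ is a bijection, the cardinality condition $|L| = \continuum$ clearly transfers to $|h(L)| = \continuum$, so only the countable-intersection-with-meager-sets condition requires any work.

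First I would verify that $M \subset Y$ is meager in $Y$ if and only if $h^{-1}(M) \subset X$ is meager in $X$. The core observation is that a set $N$ is nowhere dense iff its complement contains a dense open set, and a homeomorphism maps dense open sets to dense open sets (since both $h$ and $h^{-1}$ are continuous open bijections). Consequently $N$ is nowhere dense in $X$ iff $h(N)$ is nowhere dense in $Y$, and taking countable unions preserves this equivalence for meager sets. Applying it to $h^{-1}$ gives the version I actually need: preimages of meager sets under $h$ are meager.

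Now suppose $L$ is a Luzin set in $X$ and $M \subset Y$ is meager. Then $h^{-1}(M)$ is meager in $X$, so $L \cap h^{-1}(M)$ is countable, and since $h$ is a bijection,
\[
h(L) \cap M = h(L \cap h^{-1}(M))
\]
is countable as well. Combined with $|h(L)| = \continuum$, this shows $h(L)$ is a Luzin set in $Y$. The reverse implication follows immediately by applying the same argument to the homeomorphism $h^{-1} : Y \to X$ and the Luzin set $h(L)$.

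There is no real obstacle here; the only point that requires care is the preservation of meagerness under homeomorphisms, and that is a standard topological fact established in a single line from the definition of nowhere dense in terms of dense open complements.
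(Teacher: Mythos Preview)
Your proof is correct and complete. The paper actually states this proposition without proof, treating it as a standard topological fact; your argument --- reducing to the preservation of meagerness under homeomorphisms and then using the bijection to transfer the countable-intersection condition --- is exactly the routine verification one would supply if asked, so there is nothing to compare.
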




Now we may almost effortless prove that:
\begin{myth}
\label{prostesuperluzin}
Assume that a Luzin set exists. Then there exists a set $A\subset\RR^{2}$ such that for each straight line $l$ a set $A\cap l$ is a strong Luzin set.
\end{myth}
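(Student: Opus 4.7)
My plan is to exhibit a set $A \subset \RR^2$ as a union of concentric circles around the origin whose radii form a strong Luzin set. The geometric key is that for any line $l$ at distance $d \geq 0$ from the origin, the norm $\|\cdot\|$ restricted to $l$ factors through a $2$-to-$1$ surjection onto $[d, \infty)$ whose two branches are homeomorphisms onto their images; the preceding proposition on homeomorphism-invariance of Luzin sets (which extends to strong Luzin sets, as meagerness is topologically invariant) will therefore push a strong Luzin set of radii to a strong Luzin set on every line.

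Concretely, let $L$ be the given Luzin set. An affine change of coordinates on $\RR$ (a homeomorphism) lets me assume $L \subset (0,1)$ without losing Luzin-ness. Set $L^* := L + \QQ$, which is strong Luzin by Lemma \ref{superiluzinq}, and define
$$ A := \{\, p \in \RR^2 : \|p\| \in L^* \,\}. $$
Given a line $l$ at distance $d \geq 0$ from the origin with foot of perpendicular $p_0$ and unit direction $v$, parametrise $l$ by the homeomorphism $\phi(t) = p_0 + tv$. From $\|\phi(t)\|^2 = d^2 + t^2$ one computes
$$ \phi^{-1}(A) \;=\; h_d\bigl(L^* \cap [d, \infty)\bigr) \,\cup\, \bigl(-h_d\bigl(L^* \cap [d, \infty)\bigr)\bigr), $$
where $h_d : [d, \infty) \to [0, \infty)$, $h_d(s) = \sqrt{s^2 - d^2}$, is a homeomorphism (the identity when $d = 0$).

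The one nontrivial verification is that $L^* \cap [d, \infty)$ is itself strong Luzin in $[d, \infty)$. Cardinality is where the normalisation pays off: for any rational $q > d$, $L + q \subset (q, q+1) \subset [d, \infty)$, so $|L^* \cap [d, \infty)| \geq |L+q| = \continuum$. The meagerness and non-meagerness conditions transfer from $L^*$ because $[d, \infty)$ is open in $\RR$, so a Borel set is (non-)meager in $[d, \infty)$ exactly when it is (non-)meager in $\RR$. Applying the homeomorphism proposition to $h_d$ makes $h_d(L^* \cap [d, \infty))$ a strong Luzin subset of $[0, \infty)$; its symmetric union $\phi^{-1}(A)$ is then strong Luzin in $\RR$, since any non-meager Borel $B \subset \RR$ splits as $B \cap [0,\infty) \cup B \cap (-\infty,0]$ with at least one piece non-meager in its half-line. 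A final application of the proposition to $\phi$ transports this to $l$, yielding $A \cap l$ strong Luzin.
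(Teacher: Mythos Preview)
Your proof is correct and takes a genuinely different route from the paper's. The paper enumerates all lines $\{l_\alpha : \alpha < \continuum\}$, places a homeomorphic copy $L_\alpha$ of a fixed strong Luzin set on each $l_\alpha$, and sets $A = \bigcup_\alpha \bigl(L_\alpha \setminus \bigcup_{\beta < \alpha} l_\beta\bigr)$; the disjointification works because distinct lines meet in at most one point, so fewer than $\continuum$ points are removed from each $L_\alpha$. Your construction is uniform and explicit: the single set $A = \{p : \|p\| \in L^*\}$ is defined without any transfinite recursion, and the elementary geometry of the circle--line intersection does all the work. This sidesteps the cardinality bookkeeping in the paper's argument and yields a rotationally invariant example; the paper's line-by-line approach, on the other hand, adapts more readily to broader families of curves, as it does later for images of arbitrary homeomorphisms $\RR\to\RR^2$.

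Two small inaccuracies worth fixing: an affine map cannot send $\RR$ onto $(0,1)$ --- you simply want a homeomorphism, as your parenthetical already indicates --- and $[d,\infty)$ is not open in $\RR$. The transfer of (non-)meagerness between $[d,\infty)$ and $\RR$ that you need holds instead because the boundary $\{d\}$ is a single nowhere dense point; equivalently, work in the open half-line $(d,\infty)$ throughout and observe that the point $d$ is negligible for every clause in the definition of a strong Luzin set.
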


\begin{proof}
Let $\{l_{\alpha}:\alpha<\continuum\}$ be an enumeration of all straight lines. Let $L$ be a strong Luzin set (obtained by Lemma \ref{superiluzinq}). Since each straight line is a homeomorphic image of $\RR$ we can take for each $\xi<\continuum$ a set $L_{\xi}$ - image of $L$ by some homeomorphism on its image $h:\RR\rightarrow \RR^{2}$ such that $h(\RR)=l_{\xi}$. Next let us set:
$$
A=\bigcup^{\cdotp}_{\alpha<\continuum}(L_{\alpha}\bez\bigcup_{\beta<\alpha}l_{\beta}).
$$
Each two different lines intersect at most at one point, so for each $\alpha<\continuum$ a set $L_{\alpha}\bez\bigcup_{\beta<\alpha}l_{\beta}$ remains a strong Luzin set in $l_{\alpha}$ and hence $A$ is the set.

\end{proof}

Under CH, we may obtain a little more.
\begin{myth}[CH]
\label{prosteluzinhamel}
 There exists a set $A\subset\RR^{2}$ such that for each straight line $l$ a set $A\cap l$ is a strong Luzin set and $A$ is a Hamel basis.
\end{myth}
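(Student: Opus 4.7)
The plan is to combine the construction of Theorem \ref{prostesuperluzin} with a parallel bookkeeping that simultaneously forces $A$ to be a Hamel basis of $\RR^2$ over $\QQ$. Under CH I would enumerate: all straight lines $\{l_\alpha : \alpha<\continuum\}$; a bookkeeping $\gamma\mapsto(\alpha_\gamma,U_\gamma)$, $\gamma<\continuum$, hitting each pair ``(line, basic rational open segment of that line)'' cofinally often; all Borel meager subsets-of-lines $\{M_\delta : \delta<\continuum\}$ (each $M_\delta$ recorded together with its ambient line); and all points $\{p_\gamma : \gamma<\continuum\}$ of $\RR^2$. Then I construct $A=\bigcup_{\gamma<\continuum}A_\gamma$ by induction, keeping every $A_\gamma$ countable and $\QQ$-linearly independent.

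At stage $\gamma$ the ``obstruction set''
$$
F_\gamma=\QQ\text{-span}(A_\gamma)\cup\bigcup_{\delta<\gamma}M_\delta
$$
is meager in $\RR^2$, and the key observation is that $F_\gamma\cap l$ is meager in every line $l$: the $M_\delta$ that live on $l$ are meager in $l$, while each $M_\delta$ on a different line contributes at most a single intersection point. I then perform two sub-steps. \emph{Luzin step:} pick $x_\gamma\in U_\gamma\setminus F_\gamma$, possible because $U_\gamma$ is $\mc{M}$-positive in $l_{\alpha_\gamma}$ while $F_\gamma\cap l_{\alpha_\gamma}$ is meager. \emph{Span step:} if $p_\gamma\notin V:=\QQ\text{-span}(A_\gamma\cup\{x_\gamma\})$, choose two spanners $y_\gamma^1,y_\gamma^2\in\RR^2$ with $y_\gamma^1+y_\gamma^2=p_\gamma$, picked so that both avoid $F_\gamma$ and so that $\QQ$-independence over $A_\gamma\cup\{x_\gamma\}$ is preserved; add them to $A$.

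The verification is then routine. For each line $l$: the cofinal bookkeeping gives $|A\cap l\cap U|=\continuum$ for every basic segment $U\subset l$, and for each Borel meager $M=M_{\delta_0}\subset l$, every point added at a stage $\gamma>\delta_0$ lies outside $F_\gamma\supset M$, so $A\cap l\cap M$ is contained in the countable collection of points chosen by stage $\delta_0$. Transporting the ``rational-interval'' characterization of strong Luzin sets recalled in the preliminaries from $\RR$ to $l$ via a homeomorphism, each $A\cap l$ is a strong Luzin set. The induction maintains $\QQ$-independence of $A$, and after stage $\gamma$ one has $p_\gamma\in\QQ\text{-span}(A)$, so $A$ is a Hamel basis.

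The main obstacle is the span step. The naive attempt to adjoin a single $y_\gamma$ with $p_\gamma\in V+\QQ y_\gamma$ restricts $y_\gamma$ to a countable affine set which might lie entirely inside $\bigcup_{\delta<\gamma}M_\delta$. Splitting the task into two spanners and parameterizing by $y_\gamma^1\in\RR^2$ (with $y_\gamma^2=p_\gamma-y_\gamma^1$) turns it into avoiding the meager set $F_\gamma\cup(p_\gamma-F_\gamma)\subset\RR^2$ together with a countable independence-bad set, whose complement is comeager and hence non-empty; this observation is the only new ingredient beyond the construction of Theorem \ref{prostesuperluzin}.
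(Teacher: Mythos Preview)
Your proposal is correct and follows essentially the same route as the paper. The paper does not give a separate proof of Theorem \ref{prosteluzinhamel}; it is subsumed by the proof of Theorem \ref{homeosuperhamel}, which carries out exactly the same two ingredients you isolate: a transfinite induction under CH picking ``Luzin points'' in prescribed basic open pieces while avoiding countably many nowhere dense sets, together with the two-spanner trick $a_\xi+b_\xi=r_\xi$ (your $y_\gamma^1+y_\gamma^2=p_\gamma$) to force the Hamel-basis property. The only organisational difference is that the paper uses a nested indexing $x_{\xi,\zeta,\eta}$ (handling all lines seen so far at each step), whereas you flatten this via a bookkeeping function hitting each pair (line, basic segment) cofinally often; both are standard and equivalent. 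Your observation that a meager set on one line meets any other line in at most one point is precisely the straight-line specialisation of Lemma \ref{homeointersekcje}.
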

Let us state now some other helpful result.
\begin{mylemma}
\label{homeointersekcje}
	Let $f, g: \RR\rightarrow \RR^{2}$ be homeomorphisms on their images and let $M$ be a closed nowhere dense set in $\RR$. Then $g(\RR)\cap f(M)$ is meager in $g(\RR)$. 
\end{mylemma}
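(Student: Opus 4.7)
The plan is to transfer the problem through the homeomorphism $g$: since $g: \RR \to g(\RR)$ is a homeomorphism, the set $g(\RR)\cap f(M)$ is meager in $g(\RR)$ if and only if $g^{-1}(f(M))\subset\RR$ is meager in $\RR$. So the task reduces to showing the latter.

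To do this, I will decompose $M$ into compact pieces. Writing $K_n = M\cap[-n,n]$, each $K_n$ is closed and bounded, hence compact; consequently $f(K_n)$ is compact in $\RR^2$ and therefore closed in the plane, so $g^{-1}(f(K_n))$ is closed in $\RR$. Since $g^{-1}(f(M))=\bigcup_{n\in\NN}g^{-1}(f(K_n))$, it is enough to verify that each $g^{-1}(f(K_n))$ has empty interior, i.e.\ is nowhere dense.

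Suppose for contradiction that some $g^{-1}(f(K_n))$ contains a nonempty open interval $(a,b)$. Then $g((a,b))\subset f(K_n)\subset f(M)\subset f(\RR)$, and applying the continuous map $f^{-1}:f(\RR)\to\RR$ produces a nonempty connected subset $A=f^{-1}(g((a,b)))$ of $M$. As a connected subset of $\RR$, $A$ must be an interval; but $M$ is nowhere dense, so $A$ can only be a single point. This would force $g$ to map the whole interval $(a,b)$ to one value in $\RR^2$, contradicting injectivity of $g$.

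The main point requiring care is that $f(\RR)$ need not be closed in $\RR^2$, so one cannot directly conclude that $f(M)$ is closed in the plane and thereby deduce closedness of $g^{-1}(f(M))$ in one stroke. Localising $M$ to the compact sets $K_n$ circumvents this obstacle, since compactness of $f(K_n)$ is automatic from compactness of $K_n$, independently of the ambient behaviour of $f(\RR)$ in $\RR^2$.
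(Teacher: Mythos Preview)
Your argument is correct and shares the paper's overall strategy: break the set into countably many closed pieces using $\sigma$-compactness, then derive a contradiction from an interval landing inside a nowhere dense set. The difference lies in which set gets decomposed. The paper covers the domain of $g$ by compacts $F_n$ and argues that each $f(M)\cap g(F_n)$ is compact and nowhere dense in $g(\RR)$; you instead cover $M$ by compacts $K_n=M\cap[-n,n]$, so that $f(K_n)$ is automatically compact in $\RR^2$ and $g^{-1}(f(K_n))$ closed in $\RR$. Your choice handles precisely the subtlety you flag at the end---that $f(\RR)$ need not be closed in the plane---while the paper's assertion that $f(M)\cap g(F_n)$ is compact tacitly requires a similar localisation to be fully justified. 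Your contradiction step via connectedness of $f^{-1}(g((a,b)))$ is the same idea as the paper's claim that the interval is ``also an interval in $f(\RR)$'', just spelled out more explicitly.
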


\begin{proof}
Let $\langle F_{n}: n\in\NN\rangle$ be a sequence of compact sets such that $\RR=\bigcup_{n\in\NN}F_{n}$. Let us fix some $n\in\NN$. $f(M)$ is closed and nowhere dense in $f(\RR)$ and $f(M)\cap g(F_{n})$ is compact. Suppose a contrario that $f(M)\cap g(F_{n})$ is not nowhere dense in $g(\RR)$, that is it has nonempty interior. Then it also contains some open interval $I$. $I$ is also an interval in $f(\RR)$, what brings us contradiction. Then $f(M)\cap g(F_{n})$ is closed and nowhere dense in $g(\RR)$ and so $f(M)\cap g(\RR)$ is meager in $g(\RR)$.

\end{proof}

Thanks to this lemma we may show that:
\begin{myth}[CH]
\label{homeosuperhamel}
There exists a set $A\subset\RR^{2}$ such that for each homeomorphism $h:\RR\rightarrow \RR^{2}$ on its image a set $h(\RR)\cap A$ is a strong Luzin set and $A$ is a Hamel basis.
\end{myth}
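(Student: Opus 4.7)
Using CH, I run a transfinite recursion of length $\continuum$. Enumerate all homeomorphisms $h\colon\RR\to\RR^2$ onto a subspace as $\{h_\alpha:\alpha<\continuum\}$ (there are only $\continuum$ many, since each is determined by its values on $\QQ$), all closed nowhere dense subsets of $\RR$ as $\{N_\alpha:\alpha<\continuum\}$, and all points of $\RR^2$ as $\{p_\alpha:\alpha<\continuum\}$; fix a countable base $\{B_n\}_{n\in\omega}$ of $\RR$ by open rational intervals. Interleave three kinds of bookkeeping tasks in a single list $\{T_\alpha:\alpha<\continuum\}$, arranging every (L)-task to appear cofinally often and every (M)- and (H)-task to appear at least once:
(L)\ for each $(\beta,n)$, ``add a new point of $A$ to $h_\beta(B_n)$'';
(M)\ for each $(\beta,\gamma)$, ``commit to avoid $h_\beta(N_\gamma)$ from now on'';
(H)\ for each $\beta$, ``force $p_\beta\in\mathrm{span}_{\QQ}A$''.

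At stage $\alpha$, let $A_\alpha$ be the countable set of points already chosen and $\mc{F}_\alpha$ the countable family of (M)-tasks already executed, and set
$$
F_\alpha=\mathrm{span}_{\QQ}(A_\alpha)\cup\bigcup_{(\beta,\gamma)\in\mc{F}_\alpha}h_\beta(N_\gamma).
$$
Each $h_\delta(\RR)$ is a countable union of Jordan arcs and hence nowhere dense in $\RR^2$ (by invariance of domain), so $F_\alpha$ is meager in $\RR^2$; moreover, by Lemma~\ref{homeointersekcje}, its trace on every $h_\beta(\RR)$ is meager in $h_\beta(\RR)$. If $T_\alpha$ is of type (L) at $(\beta,n)$, pick $a_\alpha\in h_\beta(B_n)\setminus F_\alpha$; this is possible because $h_\beta(B_n)$ is a nonempty open subset of $h_\beta(\RR)$, hence non-meager there. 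If $T_\alpha$ is of type (M), simply set $\mc{F}_{\alpha+1}=\mc{F}_\alpha\cup\{(\beta,\gamma)\}$. If $T_\alpha$ is of type (H) and $p_\beta\notin\mathrm{span}_\QQ A_\alpha$, pick $a\in\RR^2\setminus(F_\alpha\cup(p_\beta-F_\alpha)\cup E)$, where $E$ is the countable set of ``bad'' choices for which $\{a,p_\beta-a\}\cup A_\alpha$ fails to be $\QQ$-independent, and adjoin both $a$ and $p_\beta-a$ to $A$; since all three sets in the union are meager in $\RR^2$, such $a$ exists.

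Let $A=\bigcup_{\alpha<\continuum}A_\alpha$. By construction $A$ is $\QQ$-linearly independent, spans every $p_\beta$, and has cardinality $\continuum$, so it is a Hamel basis of $\RR^2$. Fix $\beta<\continuum$. As each (L)-task for $(\beta,n)$ fires cofinally often, $A\cap h_\beta(B_n)$ is uncountable for every $n\in\omega$. For any closed nowhere dense $N=N_\gamma\subset\RR$, no point of $A$ is added to $h_\beta(N_\gamma)$ after the (M)-task for $(\beta,\gamma)$ executes, so $A\cap h_\beta(N_\gamma)$ is countable; since every meager subset of $h_\beta(\RR)$ is covered by countably many sets of this form, $A\cap h_\beta(\RR)$ is a Luzin set in $h_\beta(\RR)$. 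Combined with the density established via the (L)-tasks, the characterization of strong Luzin sets recalled in Section~1 gives that $A\cap h_\beta(\RR)$ is a strong Luzin set.

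The essential obstacle is keeping the Luzin demands on the $\continuum$-many curves $h_\beta(\RR)$ simultaneously compatible: a point picked to serve an (L)-task for $h_{\beta_1}$ must not be trapped inside a previously-committed forbidden set $h_{\beta_2}(N)$ for some other homeomorphism. Lemma~\ref{homeointersekcje} is exactly what dissolves this obstacle, by promoting ``meager on one curve'' to ``meager on any other curve''; this keeps $F_\alpha$ meager in each $h_\beta(\RR)$ at every countable stage and thereby allows the recursion to proceed.
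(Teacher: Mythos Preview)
Your proof is correct and follows essentially the same strategy as the paper's: both run a transfinite recursion of length $\continuum=\omega_1$ that at each countable stage avoids a countable family of images $h_\beta(N_\gamma)$, using Lemma~\ref{homeointersekcje} to guarantee that the trace of this family on any target curve $h_\zeta(\RR)$ is meager there, while simultaneously maintaining $\QQ$-linear independence and forcing each $r_\xi$ into the span via a pair $a_\xi,b_\xi$ with $a_\xi+b_\xi=r_\xi$. The only difference is organizational: the paper uses a triple-indexed array $x_{\xi,\zeta,\eta}$ in which at step $\xi$ one treats all $\zeta\le\xi$, all rational intervals, and all $F_\beta$ with $\beta\le\xi$ simultaneously, whereas you interleave separate (L)/(M)/(H) bookkeeping tasks; both devices achieve the same thing.
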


\begin{proof}
Let us enumerate:
\begin{itemize}
\item $\{ h_{\alpha}: \alpha<\continuum\}$ - homeomorphisms from $\RR$ into $\RR^{2}$ on their image,
\item $\{I_{\alpha}: \alpha<\aleph_{0}\}$ - open intervals with rational endpoints,
\item $\{F_{\alpha}: \alpha<\continuum\}$ - closed nowhere dense subsets of $\RR$,
\item  $\{r_{\alpha}: \alpha<\continuum\}$ - $\RR^{2}$.
\end{itemize}
For each $\xi<\continuum$ let us choose a sequence $\langle x_{\xi,\zeta,\eta}: \zeta\leq\xi, \eta<\aleph_{0}\rangle$ and points $a_{\xi}$, $b_{\xi}$ such that for all $\zeta\leq\xi, \eta<\aleph_{0}$:
\begin{enumerate}
\item $x_{\xi,\zeta,\eta}\in h_{\zeta}(I_{\eta})\bez(span(X_{\xi,\zeta,\eta}\cup\{a_{\alpha}, b_{\alpha}: \alpha<\xi\})\cup\bigcup_{\alpha\leq\xi}h_{\alpha}(\bigcup_{\beta\leq\xi}F_{\beta}))$,
\item $a_{\xi}+b_{\xi}=r_{\xi}$,
\item $\{a_{\xi}, b_{\xi}\}\cup Y_{\xi}$ is linearly independent,
\item $a_{\xi}, b_{\xi}\notin\bigcup_{\alpha\leq\xi}h_{\alpha}(\bigcup_{\beta\leq\xi}F_{\beta})$,
\end{enumerate}
where:
\begin{eqnarray*}
X_{\xi,\zeta,\eta}&=&\{ x_{\gamma,\beta,\alpha}: \gamma<\xi, \beta\leq\gamma, \alpha<\aleph_{0}\}\cup\{ x_{\xi,\beta,\alpha}: \beta<\zeta, \alpha<\aleph_{0}\}\cup\{ x_{\xi,\zeta,\alpha}: \alpha<\eta\},
\\
Y_{\xi}&=&\{x_{\gamma,\beta,\alpha}: \gamma\leq\xi, \beta\leq\gamma, \alpha<\aleph_{0}\}\cup\{a_{\alpha}, b_{\alpha}: \alpha<\xi\},
\end{eqnarray*}
that is sets of all points picked till it comes to choose $x_{\xi,\zeta,\eta}$ and $a_{\xi}, b_{\xi}$ respectively.

A set $span(X_{\xi,\zeta,\eta}\cup\{a_{\alpha}, b_{\alpha}: \alpha<\xi\})$ is countable and by Lemma \ref{homeointersekcje} a set $\bigcup_{\alpha\leq\xi}h_{\alpha}(\bigcup_{\beta\leq\xi}F_{\beta})$ is meager in $h_{\zeta}(\RR)$ so $x_{\xi,\zeta,\eta}$ can be picked.

Now let us assume that $r_{\xi}\notin span(Y)$ (in the other case we would consider first point of the plane that doesn't belong to span(Y)). (1) and (2) implies that for all $p,q\in\QQ$ and for all $y\in span(Y)$:
\begin{eqnarray*}
pa_{\xi}+q(r_{\xi}-a_{\xi})+y & \neq & 0,
\\
a_{\xi}(p-q) & \neq & -y-qr_{\xi}.
\end{eqnarray*}
Since $r_{\xi}\notin span(Y)$ we may assume that $p-q\neq 0$. Then:
\begin{eqnarray*}
a_{\xi} & \neq & \frac{-y}{(p-q)}-\frac{q}{(p-q)}r_{\xi},
\end{eqnarray*}
what means that we should pick $a_{\xi}\notin span(Y)-r_{\xi}\QQ$. (3) follows that:
\begin{eqnarray*}
a_{\xi}&\notin&\bigcup_{\alpha\leq\xi}h_{\alpha}(\bigcup_{\beta\leq\xi}F_{\beta}),
\\
a_{\xi}&\notin&r_{\xi}-\bigcup_{\alpha\leq\xi}h_{\alpha}(\bigcup_{\beta\leq\xi}F_{\beta}).
\end{eqnarray*}
and eventually:
\begin{eqnarray*}
a_{\xi}&\notin&(\bigcup_{\alpha\leq\xi}h_{\alpha}(\bigcup_{\beta\leq\xi}F_{\beta})\cup (r_{\xi}-\bigcup_{\alpha\leq\xi}h_{\alpha}(\bigcup_{\beta\leq\xi}F_{\beta}))\cup (span(Y)-r_{\xi}\QQ),
\\
b_{\xi}&=& r_{\xi}-a_{\xi}.
\end{eqnarray*}
This finishes the construction and $A=\bigcup_{\xi<\continuum}(\{x_{\xi,\zeta,\eta}: \zeta\leq\xi, \eta<\aleph_{0}\}\cup\{a_{\xi}, b_{\xi}\})$ is the set.

\end{proof}


%


Now, let us focus on Sierpi\'nski sets. In this case situation is a little different. It is because there are homeomorphisms of the reals which does not preserve measure zero. However, there is a natural class of functions satisfying this condition. 

\begin{myfact}
Let $f: \RR\rightarrow \RR$ be an locally absolutely continuous function and let $\gamma: \RR\rightarrow\RR^{2}$ be such that $\gamma(\RR)=f$. Then:
$$
\mu(A)=\int_{\gamma^{-1}(A)}\sqrt{1+f'^{2}(x)}d\lambda(x)
$$
defines a measure on f. Moreover, $\mu(A)=0$ $\IFF$ $\gamma^{-1}(A)=0$.
\end{myfact}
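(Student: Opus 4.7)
The plan is first to resolve the notational ambiguity in the statement: the condition $\gamma(\RR)=f$ must identify $f$ with its graph $\{(x,f(x)):x\in\RR\}\subset\RR^2$, and $\gamma$ must be the natural parametrization $\gamma(x)=(x,f(x))$, since this is the only map from $\RR$ onto the graph which makes the integral formula well-defined without further specification. Under this reading $\gamma$ is a homeomorphism from $\RR$ onto the graph, with continuous inverse given by projection on the first coordinate. The right-hand side of the final equivalence should be parsed as $\lambda(\gamma^{-1}(A))=0$.

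Next I would check that the integrand makes sense and that $\mu$ is a well-defined measure. Local absolute continuity of $f$ gives $\lambda$-a.e.\ differentiability together with local integrability of $f'$, so $x\mapsto\sqrt{1+f'(x)^2}$ is a $\lambda$-a.e.\ defined, non-negative, measurable function on $\RR$. The natural $\sigma$-algebra on the graph is
$$
\Sigma=\{A\subset\gamma(\RR):\gamma^{-1}(A)\in\mc{L}(\RR)\},
$$
which is closed under complements and countable unions because $\gamma^{-1}$ is. That $\mu$ is a measure on $\Sigma$ then reduces to standard properties of the Lebesgue integral: $\mu(\emptyset)=0$ is immediate, non-negativity is clear, and for pairwise disjoint $(A_n)\subset\Sigma$ the injectivity of $\gamma$ makes the preimages $\gamma^{-1}(A_n)$ pairwise disjoint, whence monotone convergence applied to truncations gives $\mu(\bigcup_n A_n)=\sum_n\mu(A_n)$.

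For the equivalence, the direction $\lambda(\gamma^{-1}(A))=0\IMP\mu(A)=0$ is the trivial observation that a measurable function integrates to zero over any $\lambda$-null set. The converse rests on the pointwise lower bound $\sqrt{1+f'(x)^2}\geq 1$, which yields $\mu(A)\geq\lambda(\gamma^{-1}(A))$ and therefore forces $\lambda(\gamma^{-1}(A))=0$ as soon as $\mu(A)=0$. The main obstacle here is not really mathematical but notational: the statement juggles several roles of $f$ (as a function, as its graph, as the underlying set of the measure), and one must be careful that $\mu$ lives on the graph sitting inside $\RR^2$ while its definition is transferred from $\RR$ via the parametrization $\gamma$.
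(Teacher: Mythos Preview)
Your argument is correct: once $\gamma$ is read as the natural parametrization $x\mapsto(x,f(x))$ and the right-hand side of the equivalence as $\lambda(\gamma^{-1}(A))=0$, the proof reduces exactly as you say to (i) countable additivity of the Lebesgue integral over disjoint preimages, and (ii) the pointwise bound $\sqrt{1+f'^2}\ge 1$, which together with the trivial direction yields the equivalence. The paper itself states this proposition without proof, treating it as a standard fact about arc-length measure, so there is no alternative argument to compare yours against; the route you take is the expected one.
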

So now we have a characterization of null sets in given absolutely continuous function. We shall prove the following:
\begin{myth}
Assume that Luzin and Sierpiński sets exist. Then there exist a set $A\subset\RR^{2}$ such that for every increasing and continuous function $f$ $A\cap f$ is a strong Luzin set and for each decreasing locally absolutely continuous function $g$ $A\cap g$ is a strong Sierpiński set. 
\end{myth}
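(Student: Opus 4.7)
The existence of both a Luzin and a Sierpi\'nski set is equivalent to CH by Rothberger's theorem (Theorem~1.1), so the plan is to work under CH and construct $A$ by transfinite recursion of length $\continuum$. Enumerate all increasing continuous functions $\{f_\alpha:\alpha<\continuum\}$, all decreasing locally absolutely continuous functions $\{g_\alpha:\alpha<\continuum\}$, all Borel meager subsets $\{M_\alpha:\alpha<\continuum\}$, all Borel Lebesgue-null subsets $\{N_\alpha:\alpha<\continuum\}$, and all Borel Lebesgue-positive subsets $\{P_\alpha:\alpha<\continuum\}$ of $\RR$, together with all open intervals $\{I_n:n<\omega\}$ with rational endpoints. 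Through the parametrizations $x\mapsto(x,f_\alpha(x))$ and $x\mapsto(x,g_\alpha(x))$, meager Borel subsets of $\operatorname{graph}(f_\alpha)$ are exactly the sets $\{(x,f_\alpha(x)):x\in M_\beta\}$, and by the proposition preceding the theorem, $\mu_\alpha$-null (respectively $\mu_\alpha$-positive) Borel subsets of $\operatorname{graph}(g_\alpha)$ are exactly $\{(x,g_\alpha(x)):x\in N_\beta\}$ (respectively $\{(x,g_\alpha(x)):x\in P_\beta\}$). By the rational-interval characterization preceding Proposition~1.2 it then suffices to arrange that $A\cap\operatorname{graph}(f_\alpha)$ meets each $\{(x,f_\alpha(x)):x\in I_n\}$ in $\continuum$ many points and meets every meager strip in at most countably many, with the analogous conditions on $\operatorname{graph}(g_\alpha)$.

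At stage $\xi<\continuum$ I would select, for every $\alpha\le\xi$ and $n\le\xi$, a point $a_{\xi,\alpha,n}\in\operatorname{graph}(f_\alpha)$ with first coordinate in $I_n$; and for every $\alpha,\beta\le\xi$ for which $\{x:g_\alpha(x)\in P_\beta\}$ has positive Lebesgue measure, a point $b_{\xi,\alpha,\beta}\in\operatorname{graph}(g_\alpha)$ with first coordinate in that set. Each selection must avoid all previously chosen points and all strips $\{(x,f_\gamma(x)):x\in M_\delta\}$ and $\{(x,g_\gamma(x)):x\in N_\delta\}$ for $\gamma,\delta\le\xi$. The selection is always possible because on $\operatorname{graph}(f_\alpha)$ this forbidden set is meager-plus-countable, and on $\operatorname{graph}(g_\alpha)$ it is null-plus-countable. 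Indeed, a meager strip on $\operatorname{graph}(f_\gamma)$ restricted to $\operatorname{graph}(f_\alpha)$ pulls back to $M_\delta\cap\{x:f_\alpha(x)=f_\gamma(x)\}\subseteq M_\delta$, still meager; a null strip on $\operatorname{graph}(g_\gamma)$ restricted to $\operatorname{graph}(g_\alpha)$ pulls back to a subset of $N_\delta$, still Lebesgue-null and hence $\mu_\alpha$-null by the preceding proposition; and since an increasing graph meets a decreasing graph in at most one point, each cross-type strip (null on a $g$-graph restricted to an $f$-graph, or meager on an $f$-graph restricted to a $g$-graph) contributes at most one point.

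Setting $A$ equal to the union of all selected $a_{\xi,\alpha,n}$ and $b_{\xi,\alpha,\beta}$ and doing the standard bookkeeping as in Theorem~1.2, one checks that $A\cap\operatorname{graph}(f_\alpha)$ is a strong Luzin set on $\operatorname{graph}(f_\alpha)$ and $A\cap\operatorname{graph}(g_\alpha)$ is a strong Sierpi\'nski set on $\operatorname{graph}(g_\alpha)$ for every $\alpha<\continuum$. The hard part is controlling the cross-interaction between the two sides: $\continuum$ many increasing-side selections could \emph{a priori} accumulate inside a single null subset of some decreasing graph and destroy the Sierpi\'nski property there, and symmetrically. The crucial feature that rescues the recursion is the one-point intersection between monotone graphs of opposite type, which bounds each stage's cross-contamination by a countable set and thus keeps every forbidden set small enough to allow the required selection.
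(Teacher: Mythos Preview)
Your argument is correct, but the paper takes a much shorter route. Rather than building $A$ point by point by transfinite recursion, the paper simply fixes a strong Luzin set $L$ and a strong Sierpi\'nski set $S$ on $\RR$, sets $F_\alpha=f_\alpha\upharpoonright L$ and $G_\alpha=g_\alpha\upharpoonright S$, and defines
\[
A=\bigcup_{\alpha<\continuum}\Bigl((F_\alpha\setminus\textstyle\bigcup_{\beta\le\alpha}g_\beta)\cup(G_\alpha\setminus\textstyle\bigcup_{\beta\le\alpha}f_\beta)\Bigr).
\]
The verification that $A\cap f_\alpha$ and $A\cap g_\alpha$ have the required form reduces immediately to the same one-point intersection observation you isolate: each $F_\gamma\cap f_\alpha$ is already a subset of $F_\alpha$, each $G_\gamma$ with $\gamma\ge\alpha$ has had $f_\alpha$ removed, and each $G_\gamma$ with $\gamma<\alpha$ contributes at most one point to $f_\alpha$; so $A\cap f_\alpha$ differs from $F_\alpha$ by a countable set, and dually for $g_\alpha$. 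What your more elaborate recursion buys is flexibility: it is essentially the template the paper uses for the \emph{next} theorem, where one also forces $A$ to be a Hamel basis and the simple gluing no longer suffices. For the present statement, though, the paper's argument is one line once $L$ and $S$ are in hand. (A minor slip: your condition ``first coordinate in $\{x:g_\alpha(x)\in P_\beta\}$'' should just read ``first coordinate in $P_\beta$'', matching the parametrization you set up in the previous paragraph.)
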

\begin{proof}
Let $\langle f_{\alpha}<\continuum\rangle$ be an enumeration of increasing continuous functions, $\langle g_{\alpha}<\continuum\rangle$ be an enumeration of decreasing locally absolutely continuous functions and $L$ and $S$ be strong Luzin set and strong Sierpi\'nski set respectively. Let us denote for each $\alpha<\continuum$ $F_{\alpha}=f_{\alpha}\upharpoonright {L}$ and $G_{\alpha}=g_{\alpha}\upharpoonright {S}$. Then:
$$
A=\bigcup_{\alpha<\continuum}(F_{\alpha}\bez\bigcup_{\beta\leq\alpha}g_{\beta}\cup G_{\alpha}\bez\bigcup_{\beta\leq\alpha}f_{\beta})
$$
is the set.

\end{proof}

Since existence of Luzin and Sierpi\'nski sets is equivalent to CH, we may achieve stronger assertion. But first we shall state the following lemma:
\begin{mylemma}
\label{wykresyAC}
Let $f, g: \RR\rightarrow \RR$ be locally absolutely continuous functions. Then if $A\subset f$ is a null set in $f$ then $A\cap g$ is a null set in $g$.
\end{mylemma}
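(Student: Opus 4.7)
The plan is to reduce everything to Lebesgue measure on the $x$-axis via the parametrizations $\gamma_f(x)=(x,f(x))$ and $\gamma_g(x)=(x,g(x))$ and then exploit the fact that these $x$-projections have the inclusion $\gamma_g^{-1}(A\cap g)\subset\gamma_f^{-1}(A)$.

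More precisely, first I would invoke the preceding proposition applied to $f$: the hypothesis that $A\subset f$ is $\mu_f$-null is equivalent to $\gamma_f^{-1}(A)$ being Lebesgue null in $\RR$. Since $A\subset f$, we have $\gamma_f^{-1}(A)=\{x\in\RR:(x,f(x))\in A\}$, which is exactly the projection $\pi_x(A)$ onto the first coordinate.

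Next I would compute the corresponding projection for $A\cap g$. A point $(x,y)$ lies in $A\cap g$ iff $(x,y)\in A$ and $y=g(x)$; since $A\subset f$, it must also satisfy $y=f(x)$, so in fact $f(x)=g(x)$. Consequently
$$
\gamma_g^{-1}(A\cap g)=\{x\in\RR:(x,f(x))\in A\text{ and }f(x)=g(x)\}\subset\gamma_f^{-1}(A).
$$
This inclusion is the heart of the argument.

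Finally, since $\gamma_f^{-1}(A)$ is Lebesgue null and Lebesgue measure (in its completed form) is monotone with respect to subsets, the projection $\gamma_g^{-1}(A\cap g)$ is Lebesgue null as well. Applying the proposition in the other direction, this time to $g$, we conclude that $A\cap g$ is $\mu_g$-null in $g$. The only potential snag is the (harmless) measurability issue: $A\cap g$ need not be a Borel or even a $\mu_g$-measurable subset of $g$, but the notion of null set is taken with respect to the completion of $\mu_g$, so being a subset of a measure-zero set suffices.
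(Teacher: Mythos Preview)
The paper states this lemma without proof, so there is nothing to compare against; your argument must stand on its own, and it does. The key inclusion
\[
\gamma_g^{-1}(A\cap g)\subset\gamma_f^{-1}(A)
\]
is justified exactly as you say: since $A\subset f$ and both $f$ and $g$ are graphs of functions of $x$, any point of $A\cap g$ has the form $(x,f(x))=(x,g(x))$, so its $x$-coordinate already lies in $\gamma_f^{-1}(A)$. Combined with the equivalence $\mu_h(B)=0\iff\lambda(\gamma_h^{-1}(B))=0$ from the preceding proposition (applied once to $f$ and once to $g$), this yields the conclusion immediately. Your remark about completion is apt: the preceding proposition tacitly uses the completed arc-length measure, so being contained in a null set suffices.
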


Now we may proceed with theorem:
\begin{myth}[CH]
There exist a set $A\subset\RR^{2}$ such that for every increasing continuous function $f$ $A\cap f$ is a strong Luzin set and for each decreasing locally absolutely continuous function $g$ $A\cap g$ is a strong Sierpiński set and $A$ is a Hamel basis.
\end{myth}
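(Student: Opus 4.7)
The plan is to run a transfinite recursion of length $\continuum=\aleph_1$ merging the inductive scheme of Theorem \ref{homeosuperhamel} with the Luzin/Sierpi\'nski bookkeeping of the previous theorem: at every stage $\xi$ add designated strong Luzin witnesses on each listed $f_\alpha$, designated strong Sierpi\'nski witnesses on each listed $g_\alpha$, and a Hamel-basis pair $a_\xi,b_\xi$ with $a_\xi+b_\xi=r_\xi$. First, under CH, enumerate the increasing continuous $\{f_\alpha:\alpha<\continuum\}$, the decreasing locally absolutely continuous $\{g_\alpha:\alpha<\continuum\}$, the rational-endpoint open intervals $\{I_n:n<\omega\}$, the $\mc{N}$-positive Borel subsets $\{B_\alpha:\alpha<\continuum\}$ of the various $g_\zeta$ (with bookkeeping $\zeta(\alpha)$ recording which graph carries $B_\alpha$), the meager $F_\sigma$ subsets $\{M^\zeta_\alpha:\alpha<\continuum\}$ of each $f_\zeta$, the null $G_\delta$ subsets $\{N^\zeta_\alpha:\alpha<\continuum\}$ of each $g_\zeta$, and $\RR^2=\{r_\alpha:\alpha<\continuum\}$.

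At step $\xi$ write $Y_\xi$ for everything picked so far and
$$
E_\xi=\bigcup_{\zeta,\alpha\leq\xi}(M^\zeta_\alpha\cup N^\zeta_\alpha),
$$
a countable union of subsets of continuous graphs, hence null (and meager) in $\RR^2$. I would then choose: (i) for each $\zeta\leq\xi$ and $n<\omega$, a point $x_{\xi,\zeta,n}\in f_\zeta\cap(I_n\times\RR)$ lying outside $\mathrm{span}_\QQ(Y_\xi)\cup E_\xi$; (ii) for each $\alpha\leq\xi$, a point $y_{\xi,\alpha}\in B_\alpha$ lying outside the updated $\QQ$-span and $E_\xi$; (iii) a pair $a_\xi,b_\xi$ with $a_\xi+b_\xi=r_\xi$, $\QQ$-linearly independent from the rest, and lying in $\RR^2\setminus(E_\xi\cup(r_\xi-E_\xi))$, with the modification of Theorem \ref{homeosuperhamel} when $r_\xi\in\mathrm{span}_\QQ(Y_\xi)$. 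Viability of (i) reduces to $E_\xi\cap f_\zeta$ being meager in $f_\zeta$: the topological observation that two increasing continuous graphs share their meager structure on any open arc of coincidence handles foreign $M^\eta_\alpha$ terms, while the monotonicity observation that an increasing and a decreasing continuous function meet in at most one point handles foreign $N^\eta_\alpha$ terms. Viability of (ii) is the exact dual, with Lemma \ref{wykresyAC} transferring nullness between distinct locally absolutely continuous graphs. Viability of (iii) is immediate since $E_\xi\cup(r_\xi-E_\xi)$ is null in $\RR^2$ and the independence requirement removes only countably many cosets.

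Finally, setting $A$ equal to the union of all picks, (iii) delivers the Hamel basis property; for each $f_\zeta$, the family $\{x_{\xi,\zeta,n}:\xi\geq\zeta,\,n<\omega\}\subset A\cap f_\zeta$ meets every basic open arc $I_n\times\RR$ uncountably, while every meager $F_\sigma$ subset $M^\zeta_\alpha$ meets $A$ only inside the countable initial segment $Y_{\max(\zeta,\alpha)+1}$, because all later picks avoided $M^\zeta_\alpha$; hence $A\cap f_\zeta$ is a strong Luzin set on $f_\zeta$. The dual argument shows $A\cap g_\zeta$ is strong Sierpi\'nski on $g_\zeta$. The principal obstacle is the bookkeeping for ``cross-pollination'' of picks landing on foreign graphs, and it is resolved precisely by the two observations above: opposite-monotonicity intersections are singletons, and same-monotonicity graphs share a common local notion of meagerness (respectively nullness, via Lemma \ref{wykresyAC}) on any overlapping arc, so any contamination is already captured by the $M^\eta_\alpha$'s (respectively $N^\eta_\alpha$'s) we are avoiding.
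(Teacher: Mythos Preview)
Your argument is correct and follows essentially the same transfinite-recursion scheme as the paper: at stage $\xi$ you add Luzin witnesses on each $f_\zeta$, Sierpi\'nski witnesses on each $g_\zeta$, and a Hamel pair $a_\xi,b_\xi$, avoiding spans and the accumulated forbidden sets. The only organisational difference is that you enumerate meager/null subsets of each graph separately (the $M^\zeta_\alpha,N^\zeta_\alpha$), whereas the paper enumerates closed nowhere dense and Borel null subsets of $\RR$ once and pulls them back to every graph via the parametrisations; both bookkeepings are equivalent under CH.

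One phrasing should be tightened. When you write that ``two increasing continuous graphs share their meager structure on any open arc of coincidence'', this is true but not the whole story: two distinct increasing graphs may meet in a set with no interior (e.g.\ a Cantor set in the domain), and then there is no arc of coincidence at all. What you actually need---and what the paper invokes explicitly---is Lemma~\ref{homeointersekcje}: for any two homeomorphic copies of $\RR$ in the plane, a closed nowhere dense subset of one meets the other in a meager set. That covers the non-overlap case as well, and with it your claim that $E_\xi\cap f_\zeta$ is meager in $f_\zeta$ goes through. The dual null case is exactly Lemma~\ref{wykresyAC}, which you do cite, and the opposite-monotonicity observation (at most one intersection point) is used identically in both proofs.
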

\begin{proof}
Let us enumerate:
\begin{itemize}
\item $\{ f_{\alpha}: \alpha<\continuum\}$ - increasing continuous functions,
\item $\{U_{\alpha}: \alpha<\continuum\}$ - open subsets of $\RR$,
\item $\{F_{\alpha}: \alpha<\continuum\}$ - closed nowhere dense subsets of $\RR$,
\item $\{ g_{\alpha}: \alpha<\continuum\}$ - deacreasing locally absolutely continuous functions,
\item $\{B_{\alpha}: \alpha<\continuum\}$ - Borel subsets of $\RR$ of positive measure,
\item $\{N_{\alpha}: \alpha<\continuum\}$ - Borel null subsets of $\RR$,
\item  $\{r_{\alpha}: \alpha<\continuum\}$ - $\RR^2$.
\end{itemize}
For each $\xi<\mf{c}$ let us choose sequences $\langle l_{\xi,\zeta,\eta}: \zeta\leq\xi, \eta\leq\xi\rangle$, $\langle s_{\xi,\zeta,\eta}: \zeta\leq\xi, \eta\leq\xi\rangle$ and points $a_{\xi}$, $b_{\xi}$ such that for all $\zeta, \eta\leq\xi$:
\begin{enumerate}
\item $l_{\xi,\zeta,\eta}\in f_{\zeta}\upharpoonright{U_{\eta}}\bez(span(X_{\xi,\zeta,\eta})\cup\bigcup_{\alpha\leq\xi}f_{\alpha}\upharpoonright{\bigcup_{\beta\leq\xi}F_{\beta}}\cup\bigcup_{\alpha\leq\xi}g_{\alpha})$,
\item $s_{\xi,\zeta,\eta}\in g_{\zeta}\upharpoonright{B_{\eta}}\bez(span(Y_{\xi,\zeta,\eta})\cup\bigcup_{\alpha\leq\xi}g_{\alpha}\upharpoonright{\bigcup_{\beta\leq\xi}N_{\beta}}\cup\bigcup_{\alpha\leq\xi}f_{\alpha})$,
\item $a_{\xi}+b_{\xi}=r_{\xi}$,
\item $\{a_{\xi}, b_{\xi}\}\cup Z_{\xi}$ is linearly independent,
\item $a_{\xi}, b_{\xi}\notin(\bigcup_{\alpha\leq\xi}f_{\alpha}\upharpoonright{\bigcup_{\beta\leq\xi}F_{\beta}}\cup\bigcup_{\alpha\leq\xi}g_{\alpha}\upharpoonright{\bigcup_{\beta\leq\xi}N_{\beta}})$,
\end{enumerate}
where:
\begin{eqnarray*}
X_{\xi,\zeta,\eta}&=&\{ s_{\gamma,\beta,\alpha}: \gamma<\xi, \beta\leq\gamma, \alpha\leq\gamma\}\cup
\{ l_{\gamma,\beta,\alpha}: \gamma<\xi, \beta\leq\gamma, \alpha\leq\gamma\}\cup
\\
& &
\{ l_{\xi,\beta,\alpha}: \beta<\zeta, \alpha\leq\xi\}
\cup\{ l_{\xi,\zeta,\alpha}: \alpha<\eta\}\cup\{a_{\alpha}, b_{\alpha}: \alpha<\xi\},
\\
Y_{\xi,\zeta,\eta}&=&\{ l_{\gamma, \beta, \alpha}: \gamma\leq\xi, \beta\leq\gamma, \alpha\leq\gamma\}\cup
\{ s_{\gamma,\beta,\alpha}: \gamma<\xi, \beta\leq\gamma, \alpha\leq\gamma\}\cup
\\
& &
\{ s_{\xi,\beta,\alpha}: \beta<\zeta, \alpha\leq\xi\}
\cup\{ s_{\xi,\zeta,\alpha}: \alpha<\eta\}\cup\{a_{\alpha}, b_{\alpha}: \alpha<\xi\},
\\
Z_{\xi}&=&\{ l_{\gamma,\beta,\alpha}, s_{\gamma,\beta,\alpha}: \gamma\leq\xi, \beta\leq\gamma, \alpha\leq\gamma\}\cup\{a_{\alpha}, b_{\alpha}: \alpha<\xi\},
\end{eqnarray*}
A reasoning similar to the proof of Theorem \ref{homeosuperhamel} supported by Lemmas \ref{homeointersekcje} and \ref{wykresyAC} shows, that such choices can be made. Then $A=\bigcup_{\xi<\mf{c}}(\{l_{\xi,\beta,\alpha}, s_{\xi,\beta,\alpha}: \beta\leq\xi, \alpha\leq\xi\}\cup\{a_{\xi}, b_{\xi}\})$ is the set.

\end{proof}

We are also able to construct sets similar to these in Theorems \ref{prostesuperluzin} and \ref{prosteluzinhamel} due to fact that each straight line on the plane is an isometry of $\RR$.
\begin{myth}
Assume that a Sierpi\'nski set exists. Then there exists a set $A\subset\RR^{2}$ such that for each straight line $l$ a set $A\cap l$ is a strong Sierpi\'nski set.
\end{myth}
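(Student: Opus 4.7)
The approach is to repeat the proof of Theorem \ref{prostesuperluzin} with isometries in place of arbitrary homeomorphisms, exploiting the fact that a line in $\RR^{2}$ carries a natural arc-length parametrization which is an isometry of $\RR$ and hence preserves Lebesgue null sets and $\mc{N}$-positive Borel sets in both directions.

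First, I apply Lemma \ref{superiluzinq} with $\mc{I}=\mc{N}$ to a given Sierpi\'nski set $S_{0}\subset\RR$: the set $S:=S_{0}+\QQ$ is a strong Sierpi\'nski set, and the proof of that lemma in fact shows $|S\cap B|=\continuum$ for every $\mc{N}$-positive Borel $B\subset\RR$. Enumerate all straight lines as $\{l_{\alpha}:\alpha<\continuum\}$, and for each $\xi<\continuum$ fix an isometry $h_{\xi}:\RR\to l_{\xi}$. Setting $S_{\xi}:=h_{\xi}(S)$, the isometric invariance of one-dimensional Lebesgue measure makes $S_{\xi}$ a strong Sierpi\'nski subset of $l_{\xi}$ with $|S_{\xi}\cap B|=\continuum$ for every $\mc{N}$-positive Borel $B\subseteq l_{\xi}$.

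Following Theorem \ref{prostesuperluzin} verbatim, then put
$$
A=\bigcup^{\cdotp}_{\alpha<\continuum}\Bigl(S_{\alpha}\bez\bigcup_{\beta<\alpha}l_{\beta}\Bigr).
$$
Two distinct lines meet in at most one point, so each $S_{\alpha}\bez\bigcup_{\beta<\alpha}l_{\beta}$ is obtained from $S_{\alpha}$ by removing fewer than $\continuum$ points; since $|S_{\alpha}\cap B|=\continuum$ on every $\mc{N}$-positive Borel $B\subseteq l_{\alpha}$, this truncation still leaves a strong Sierpi\'nski set inside $l_{\alpha}$.

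The main thing to verify is the trace on a fixed line $l_{\gamma}$. By the same disjointness bookkeeping used in Theorem \ref{prostesuperluzin}, $A\cap l_{\gamma}$ equals $S_{\gamma}\bez\bigcup_{\beta<\gamma}l_{\beta}$ together with at most one extra point per index $\alpha<\gamma$ (coming from $l_{\alpha}\cap l_{\gamma}$), i.e.\ fewer than $\continuum$ additional points. The $\continuum$-sized intersections with $\mc{N}$-positive Borel subsets of $l_{\gamma}$ are untouched, so $A\cap l_{\gamma}$ remains a strong Sierpi\'nski set on $l_{\gamma}$. I expect this final cardinality bookkeeping---entirely parallel to the Luzin analogue---to be the principal and only subtlety of the argument.
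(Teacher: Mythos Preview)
Your construction is exactly the one in the paper: enumerate the lines, fix isometries $T_\alpha:\RR\to l_\alpha$, transport a fixed strong Sierpi\'nski set $S$ onto each line, and set $A=\bigcup_{\alpha}\bigl(T_\alpha(S)\setminus\bigcup_{\beta<\alpha}l_\beta\bigr)$. The paper's own proof is a terser version of yours and simply ends with ``Then $A$ is the set'' without ever discussing the extra points you analyse.

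There is, however, a step in your final bookkeeping that does not go through as stated (and the paper shares the same lacuna). You correctly observe that $A\cap l_\gamma$ consists of the core $S_\gamma\setminus\bigcup_{\beta<\gamma}l_\beta$ together with at most one extra point for each $\alpha<\gamma$, and you check that the \emph{strong} half of ``strong Sierpi\'nski'' survives: the $\continuum$-sized intersection with every $\mc{N}$-positive Borel subset of $l_\gamma$ is unaffected. But a strong Sierpi\'nski set must in particular be a Sierpi\'nski set, i.e.\ have only \emph{countable} intersection with every null set. The core inherits this from $S_\gamma$, yet nothing in the construction prevents the collection of extra points $\{l_\alpha\cap l_\gamma:\alpha<\gamma\}\cap A$ from being an uncountable subset of some null set on $l_\gamma$. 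Under the bare hypothesis ``a Sierpi\'nski set exists'' one may well have $\continuum>\omega_1$ (e.g.\ after adding $\omega_2$ random reals to a model of CH), and then $|\gamma|$ can be uncountable, so this possibility is real. The argument is complete under CH, since then every $\gamma<\continuum$ is countable and the extra points are harmless; but under the stated hypothesis the verification that $A\cap l_\gamma$ meets every null set only countably is missing.
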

\begin{proof}
	Let $\mathcal{P}=\{ l_{\alpha}: \alpha<\continuum\}$ be an enumeration of all straight lines on the plane and let $\{ T_{\alpha}: \alpha<\continuum\}$ be an enumeration of isometries such that $T_{\alpha}(\RR)=l_{\alpha}$ for $\alpha<\continuum$. By Lemma \ref{superiluzinq} we may assume that we have strong Sierpi\'nski set S. Let us define for each $\xi<\continuum$:
$$
A_{\xi}=T_{\xi}(S)\bez\bigcup_{\alpha<\xi}T_{\alpha}(\RR),
$$
and set:
$$
A=\bigcup_{\alpha<\continuum}A_{\alpha}.
$$
Then $A$ is the set.

\end{proof}

\begin{myth}[CH] 
There exists a set $A\subset\RR^{2}$ such that for each straight line $l$ on the plane a set $l\cap A$ is a strong Sierpi\'nski set and $A$ is a Hamel basis.
\end{myth}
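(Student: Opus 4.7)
My plan is to run a transfinite induction of length $\mf{c}$ combining the construction from the previous theorem (placing a strong Sierpi{\'n}ski set on every line, via isometries) with the Hamel-basis technique from Theorem~\ref{homeosuperhamel} (splitting each enumerated point of $\RR^2$ into a sum of two fresh basis elements). Assuming CH, I will first enumerate the straight lines $\{l_\alpha: \alpha<\mf{c}\}$ together with isometries $T_\alpha:\RR\to l_\alpha$, the Borel positive-measure subsets of $\RR$ $\{B_\alpha:\alpha<\mf{c}\}$, the Borel null subsets of $\RR$ $\{N_\alpha:\alpha<\mf{c}\}$, and $\RR^2=\{r_\alpha:\alpha<\mf{c}\}$.

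At stage $\xi$ I will first pick, for each $\zeta,\eta\leq\xi$, a point $s_{\xi,\zeta,\eta}\in T_\zeta(B_\eta)$ lying outside the $\QQ$-span of everything chosen so far and outside $\bigcup_{\alpha\leq\xi}T_\alpha\!\bigl(\bigcup_{\beta\leq\xi}N_\beta\bigr)$, and then pick $a_\xi,b_\xi$ with $a_\xi+b_\xi=r_\xi$ so that $\{a_\xi,b_\xi\}$ is linearly independent from everything previously chosen and both $a_\xi,b_\xi$ avoid the same forbidden union of null sets, replacing $r_\xi$ by the first point of $\RR^2$ not yet spanned if $r_\xi$ already lies in the span (as in Theorem~\ref{homeosuperhamel}). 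The set $A$ will be the union of all the points so chosen.

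Feasibility of the first choice will be easy: on the line $l_\zeta$ the set $T_\zeta(B_\eta)$ has positive linear measure, whereas the forbidden set is null in $l_\zeta$. Indeed, the part with $\alpha=\zeta$ is $T_\zeta\!\bigl(\bigcup_{\beta\leq\xi}N_\beta\bigr)$, a countable (by CH) union of null sets, and for $\alpha\neq\zeta$ one has $T_\alpha(N_\beta)\cap l_\zeta\subseteq l_\alpha\cap l_\zeta$, which is at most a single point. The remaining verification will be routine: every $r_\xi$ equals some $a_\xi+b_\xi$, so $A$ spans $\RR^2$ over $\QQ$; linear independence follows directly from the construction; and for a fixed $l_\beta$, the points $s_{\xi,\beta,\delta}$ with $\xi\geq\max(\beta,\delta)$ give $\mf{c}$ many distinct elements of $A\cap T_\beta(B_\delta)$, while only stages $\xi<\delta$ can contribute to $A\cap T_\beta(N_\delta)$, so $A\cap l_\beta$ will be a strong Sierpi{\'n}ski set.

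The main obstacle will be showing that the pair $a_\xi,b_\xi$ can always be picked, since it must simultaneously meet the Hamel-basis requirements and avoid null sets on every line enumerated so far. The forbidden locus for $a_\xi$ sits inside $\bigcup_{\alpha\leq\xi}T_\alpha(\bigcup_{\beta\leq\xi}N_\beta)$ together with its translate by $r_\xi$ and with the rational span of previously chosen points shifted by $\QQ r_\xi$. The first two pieces are countable unions of Lebesgue-null subsets of individual lines, hence each has planar Lebesgue measure zero, and the third is countable; so the complement is nonempty and a valid $a_\xi$ (with $b_\xi=r_\xi-a_\xi$) is available, exactly as in Theorem~\ref{homeosuperhamel}.
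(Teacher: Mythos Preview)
Your proposal is correct and follows essentially the same approach as the paper: the same enumerations, the same triple-indexed witnesses $x_{\xi,\zeta,\eta}\in T_\zeta(B_\eta)$ avoiding $\operatorname{span}$ of previously chosen points together with $\bigcup_{\alpha\le\xi}T_\alpha\bigl(\bigcup_{\beta\le\xi}N_\beta\bigr)$, and the same Hamel pair $a_\xi,b_\xi$ summing to $r_\xi$. If anything, you supply more of the feasibility argument (null-on-the-line for the $s$'s, planar-null for the $a$'s) than the paper does, which simply points back to earlier proofs; one small imprecision is that the stages that can contribute to $A\cap T_\beta(N_\delta)$ are those with $\xi<\max(\beta,\delta)$ rather than $\xi<\delta$, but under CH this is still countable, so the conclusion is unaffected.
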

\begin{proof}
Let us enumerate:
\begin{itemize}
\item $\{ l_{\alpha}: \alpha<\continuum\}$ - straight lines on the plane,
\item $\{ T_{\alpha}: \alpha<\continuum\}$ - isometries from $\RR$ into $\RR^{2}$ such that $(\forall \alpha<\continuum)(T_{\alpha}(\RR)=l_{\alpha})$,
\item $\{ N_{\alpha}: \alpha<\continuum\}$ - Borel null subsets of $\RR$,
\item $\{ B_{\alpha}: \alpha<\continuum\}$ - Borel subsets of $\RR$ of positive measure,
\item  $\{r_{\alpha}: \alpha<\continuum\}$ - $\RR^2$.
\end{itemize}
Analogously to previous proofs let us choose for each $\xi<\continuum$ sequence $\langle x_{\xi,\zeta,\eta}: \zeta\leq\xi, \eta\leq\xi \rangle$ and points $a_{\xi}, b_{\xi}$ such that for all $\zeta, \eta\leq\xi$:
\begin{enumerate}
\item $x_{\xi,\zeta,\eta}\in T_{\zeta}(B_{\eta})\bez(span(X_{\xi,\zeta,\eta})\cup\bigcup_{\alpha\leq\xi}T_{\alpha}(\bigcup_{\beta\leq\xi}N_{\beta}))$,
\item $a_{\xi}+b_{\xi}=r_{\xi}$,
\item $\{a_{\xi}, b_{\xi}\}\cup Y_{\xi}$ is linearly independent,
\item $a_{\xi}, b_{\xi}\notin(\bigcup_{\alpha\leq\xi}f_{\alpha}\upharpoonright{\bigcup_{\beta\leq\xi}F_{\beta}}\cup\bigcup_{\alpha\leq\xi}g_{\alpha}\upharpoonright{\bigcup_{\beta\leq\xi}N_{\beta}})$,
\end{enumerate}
where:
\begin{eqnarray*}
X_{\xi,\zeta,\eta}&=&\{ x_{\gamma,\beta,\alpha}: \gamma<\xi, \beta\leq\gamma, \alpha\leq\gamma\}\cup\{ x_{\xi,\beta,\alpha}: \beta<\zeta, \alpha\leq\xi\}\cup
\\
& &\{ x_{\xi,\zeta,\alpha}: \alpha<\eta\}\cup\{ a_{\alpha}, b_{\alpha}: \alpha<\xi\},
\\
Y_{\xi}&=&\{ x_{\gamma,\beta,\alpha}: \gamma\leq\xi, \beta\leq\gamma, \alpha\leq\gamma\}\cup\{ a_{\alpha}, b_{\alpha}: \alpha<\xi\}.
\end{eqnarray*}
Argumentation that at each step such choices can be made is almost identical to these in previous proofs. So, $A=\bigcup_{\xi<\continuum} (\{x_{\xi,\zeta,\eta}: \zeta\leq\xi, \eta\leq\xi\}\cup\{a_{\xi}, b_{\xi}\})$ is the set.

\end{proof}

\subsection{Additional properties involving algebraic structure}
Throughout this section we fix following enumerations:
\begin{itemize}
\item $\{ B_{\alpha}: \alpha<\mf{c}\}$ - Borel $\mc{I}$-positive subsets of $\RR$,
\item $\{ I_{\alpha}: \alpha<\mf{c}\}$ - Borel subsets of $\RR$ that belong to $\mc{I}$,
\item $\{ r_{\alpha}: \alpha<\mf{c}\}$ - $\RR$.
\end{itemize}
Let us begin with two helpful lemmas.
\begin{mylemma}
	Let $L$ be an $\mc{I}$-Luzin set. Then there exists a linearly independent $\mc{I}$-Luzin set.
\end{mylemma}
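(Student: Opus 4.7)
The natural plan is to build a linearly independent $\mc{I}$-Luzin set $L'$ as a subset of $L$ by transfinite recursion of length $\continuum$. Since any subset of $L$ automatically inherits the property that its intersection with each $I \in \mc{I}$ is countable, the only thing to arrange at each stage is (i) linear independence over $\QQ$, and (ii) that the construction runs through all $\continuum$ stages so that $|L'| = \continuum$.

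At stage $\alpha < \continuum$, suppose we have already chosen $\{l_\beta : \beta < \alpha\} \subset L$. Let $V_\alpha = \textrm{span}_\QQ\{l_\beta : \beta < \alpha\}$. Then $|V_\alpha| \leq \max(|\alpha|, \aleph_0) < \continuum$, so in particular $L \setminus V_\alpha \neq \emptyset$ (since $|L| = \continuum$); pick any $l_\alpha$ from this difference. By construction, for every $\alpha$ the element $l_\alpha$ is not a rational linear combination of previously chosen elements, which is exactly the condition that the whole family $\{l_\alpha : \alpha < \continuum\}$ be linearly independent over $\QQ$.

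Set $L' = \{l_\alpha : \alpha < \continuum\}$. Then $|L'| = \continuum$ and $L'$ is linearly independent by the previous paragraph. Finally, for any $I \in \mc{I}$ we have $L' \cap I \subseteq L \cap I$, and the latter is countable because $L$ is $\mc{I}$-Luzin, so $L'$ is $\mc{I}$-Luzin.

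There is essentially no obstacle here beyond the cardinality bookkeeping: the key point is that the $\QQ$-span of fewer than $\continuum$ reals has cardinality strictly less than $\continuum$, which guarantees that a fresh point of $L$ outside the span is always available. The hypotheses that $\mc{I}$ is translation/scale invariant, has a Borel base, or has the Steinhaus property are not needed for this lemma; only the definition of an $\mc{I}$-Luzin set is used.
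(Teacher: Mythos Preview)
Your argument is correct. The paper does not give a separate proof of this lemma; it is stated without proof and immediately followed by the stronger version (existence of a linearly independent \emph{strong} $\mc{I}$-Luzin set), whose proof is essentially the same transfinite recursion you wrote, with the extra bookkeeping that at each stage one picks the new point inside a prescribed Borel $\mc{I}$-positive set $B_\zeta$ rather than merely anywhere in $L$. Your simpler recursion---pick $l_\alpha\in L\setminus\mathrm{span}_\QQ\{l_\beta:\beta<\alpha\}$---is exactly the intended argument for the weaker statement, and your observations that $|\mathrm{span}_\QQ\{l_\beta:\beta<\alpha\}|<\continuum$ and that any $\continuum$-sized subset of $L$ is again $\mc{I}$-Luzin are the only ingredients needed.
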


\begin{mylemma}
	Let $L$ be an $\mc{I}$-Luzin set. Then there exists a linearly independent strong $\mc{I}$-Luzin set.
\end{mylemma}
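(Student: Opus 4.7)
The plan is to combine the preceding lemma with the $+\QQ$-trick of Lemma \ref{superiluzinq} and a transfinite selection in the spirit of Theorem \ref{partycjailuzinych}. By the previous lemma I may replace $L$ with a linearly independent $\mc{I}$-Luzin set, still denoted $L$; Lemma \ref{superiluzinq} then guarantees that $L+\QQ$ is strong $\mc{I}$-Luzin. The task therefore reduces to extracting from $L+\QQ$ a linearly independent subset that remains strong $\mc{I}$-Luzin.

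For the extraction, I would recursively pick, at stage $\xi<\mf{c}$, a family $\{x_{\xi,\zeta,\eta}:\zeta\leq\xi,\ \eta\leq\xi\}$ so that
$$
x_{\xi,\zeta,\eta}\in B_\eta\cap(L+\QQ)\setminus\Bigl(\bigcup_{\alpha\leq\xi}I_\alpha\cup\mathrm{span}_\QQ(X_{\xi,\zeta,\eta})\Bigr),
$$
where $X_{\xi,\zeta,\eta}$ is the collection of points selected prior to this one. Setting $L^{*}=\bigcup_{\xi<\mf{c}}\{x_{\xi,\zeta,\eta}:\zeta\leq\xi,\ \eta\leq\xi\}$, the bookkeeping done in the proof of Theorem \ref{partycjailuzinych} shows that $|L^{*}\cap B_\eta|=\mf{c}$ for every $\eta$ (each stage $\xi\geq\eta$ contributes a fresh point to $B_\eta$), and that for any $I\in\mc{I}$, contained in some $I_\alpha$ by the Borel base assumption, $L^{*}\cap I_\alpha$ is built from points chosen strictly before stage $\alpha$ and is therefore small. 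Linear independence is automatic, because each new point is chosen outside the $\QQ$-span of all earlier ones.

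The step I expect to be the main obstacle is verifying the viability of the inductive choice, i.e.~that the subtracted region does not exhaust $B_\eta\cap(L+\QQ)$. The union $\bigcup_{\alpha\leq\xi}I_\alpha$ lies in $\mc{I}$ by $\sigma$-additivity, exactly as in Theorem \ref{partycjailuzinych}; and $\mathrm{span}_\QQ(X_{\xi,\zeta,\eta})$ lies in $\mc{I}$ because $X_{\xi,\zeta,\eta}$ is countable (the Steinhaus property forces singletons into $\mc{I}$, and $\sigma$-additivity lifts this to all countable sets). Thus the subtracted region is an $\mc{I}$-set and meets the $\mc{I}$-Luzin set $L+\QQ$ in only countably many points. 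On the other hand $L+\QQ$ is strong $\mc{I}$-Luzin, so $B_\eta\cap(L+\QQ)$ is uncountable, and a valid $x_{\xi,\zeta,\eta}$ can always be found. This completes the construction and yields the desired linearly independent strong $\mc{I}$-Luzin set.
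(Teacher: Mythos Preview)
Your overall shape is right and close to the paper's, but two of your justifications fail because this lemma does \emph{not} assume CH. At stage $\xi<\mf{c}$ the ordinal $\xi$ need not be countable, so $\bigcup_{\alpha\le\xi}I_\alpha$ is not a countable union (``$\sigma$-additivity'' does not apply), and $X_{\xi,\zeta,\eta}$ is not a countable set, so your argument that $\mathrm{span}_\QQ(X_{\xi,\zeta,\eta})\in\mc{I}$ collapses. For the same reason, your verification that $L^{*}$ is $\mc{I}$-Luzin via ``points chosen before stage $\alpha$'' only gives $|L^{*}\cap I_\alpha|\le|\alpha|$, not countability.

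All of this is self-inflicted and easily repaired. The $\mc{I}$-Luzin property of $L^{*}$ is automatic because $L^{*}\subseteq L+\QQ$; there is no need to avoid the $I_\alpha$'s at all, and the third index $\zeta$ (borrowed from Theorem~\ref{partycjailuzinych}, where its purpose was to manufacture $\mf{c}$ many disjoint sets) is superfluous here. Viability of the inductive choice follows from pure cardinality rather than from membership in $\mc{I}$: the proof of Lemma~\ref{superiluzinq} gives $|B_\eta\cap(L+\QQ)|=\mf{c}$, whereas the $\QQ$-span of the previously chosen points has cardinality at most $\max(|\xi|,\aleph_0)<\mf{c}$. Stripped of the redundant ingredients, this is exactly the paper's argument: a two-index recursion picking $l_{\xi,\zeta}\in(L_0\cap B_\zeta)\setminus\mathrm{span}(\text{earlier points})$, with the $\mc{I}$-Luzin property inherited from the ambient strong $\mc{I}$-Luzin set $L_0$.
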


\begin{proof}
	By Lemma \ref{superiluzinq} we may assume that we have a strong $\mc{I}$-Luzin set $L_0$. For each $\xi<\continuum$ we choose a sequence $\langle l_{\xi,\zeta}: \zeta\leq\xi\rangle$ such that for every $\zeta\leq\xi$:
$$
l_{\xi,\zeta}\in (L_0\cap B_{\zeta})\bez span(\{l_{\beta,\alpha}: \beta<\xi, \alpha\leq\beta\}\cup\{l_{\xi,\alpha}: \alpha<\zeta\}).
$$
Then $L=\{l_{\xi,\zeta}: \xi<\mf{c}, \zeta\leq\xi\}$ is the set.

\end{proof}

In this context the following question can be posed.
\begin{problem}
Does the existence of an $\mc{I}$-Luzin set imply the existence of an $\mc{I}$-Luzin set which is a Hamel base?
\end{problem}

Next observation is known and can be found e.g. in \cite{Kucz}.

\begin{myfact}[CH]
There is an $\mc{I}$-Luzin set $L$ such that $L$ is a linear subspace of $\RR.$ 
\end{myfact}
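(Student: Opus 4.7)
The plan is to construct a $\QQ$-linearly independent transfinite sequence $\langle x_\alpha : \alpha < \continuum \rangle$ by recursion and then set $L=\mathrm{span}_{\QQ}\{x_\alpha : \alpha < \continuum\}$. Using CH and the Borel base of $\mc{I}$, first fix an enumeration $\{I_\alpha : \alpha < \continuum\}$ of the Borel members of $\mc{I}$.

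At stage $\alpha<\continuum$ let $L_\alpha=\mathrm{span}_{\QQ}\{x_\beta:\beta<\alpha\}$, which is countable since $\alpha$ is countable under CH. I would choose $x_\alpha$ outside $L_\alpha \cup F_\alpha$, where
\[
F_\alpha=\bigcup_{\gamma\leq\alpha}\bigcup_{q\in\QQ\bez\{0\}}\bigcup_{y\in L_\alpha}\bigl(q^{-1}I_\gamma - q^{-1}y\bigr).
\]
By scale invariance each $q^{-1}I_\gamma$ lies in $\mc{I}$; by translation invariance so does each $q^{-1}I_\gamma - q^{-1}y$; and $F_\alpha$ is a countable union of such sets, hence stays in $\mc{I}$ by the $\sigma$-ideal property. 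Since $\RR\notin\mc{I}$, the choice is feasible, and the condition $x_\alpha\notin L_\alpha$ keeps the family $\QQ$-linearly independent.

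To verify that $L$ is $\mc{I}$-Luzin, fix $I\in\mc{I}$ and, using the Borel base, enlarge it to some $I_\gamma\in\mc{B}\cap\mc{I}$. Every $v\in L$ has a unique representation $v=q_0 x_{\alpha_0}+\dots+q_n x_{\alpha_n}$ with $\alpha_0<\dots<\alpha_n$ and nonzero rational $q_i$. If $\alpha_n\leq\gamma$ then $v\in L_{\gamma+1}$, a countable set. If $\alpha_n>\gamma$ then at stage $\alpha_n$ the point $x_{\alpha_n}$ was explicitly chosen outside $q_n^{-1}I_\gamma - q_n^{-1}(q_0 x_{\alpha_0}+\dots+q_{n-1}x_{\alpha_{n-1}})$, which is equivalent to $v\notin I_\gamma$. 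Hence $L\cap I_\gamma\subseteq L_{\gamma+1}$ is countable, and clearly $|L|=\continuum$. The only delicate point is verifying $F_\alpha\in\mc{I}$ at each stage, which is precisely where all four standing hypotheses on $\mc{I}$ (Borel base, translation invariance, scale invariance, $\sigma$-ideal) enter simultaneously; everything else is bookkeeping about finite $\QQ$-linear combinations.
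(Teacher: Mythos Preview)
Your argument is correct. The paper does not give its own proof of this proposition (it is cited as folklore from \cite{Kucz}), but your construction is essentially the same as the paper's later proof of Theorem~\ref{luzinliniowap}: there one also builds a $\QQ$-linearly independent sequence $\langle l_\alpha\rangle$ by choosing $l_\xi\notin\QQ\cdot\bigcup_{\alpha<\xi}I_\alpha+\mathrm{span}(\{l_\alpha:\alpha<\xi\})$, and the resulting $\mathrm{span}(L)$ is exactly the linear-subspace $\mc{I}$-Luzin set you produce.
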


Let us notice that the existance of a Luzin set which is a linear subspace of $\RR$ does not imply $CH.$

\begin{myth}
 It is consistent that $2^\omega=\omega_2$ and there is  a Luzin set which is a linear subspace of $\RR.$
\end{myth}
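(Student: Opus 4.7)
The plan is to work in a Cohen extension. Start with a ground model $V$ satisfying CH, and let $\mathbb{P} = \mathrm{Fn}(\omega_2 \times \omega, 2)$ be the finite-support product adding $\omega_2$ Cohen reals $\langle c_\alpha : \alpha < \omega_2\rangle$. Standard facts give $V[G]\models 2^\omega = \omega_2$. In $V[G]$, define
$$
L = \mathrm{span}_{\QQ}\{c_\alpha : \alpha < \omega_2\}.
$$
This is by construction a $\QQ$-linear subspace of $\RR$, so the task reduces to showing $L$ is a Luzin set.

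First I would verify that $|L| = \omega_2$. Genericity gives that the family $\{c_\alpha : \alpha < \omega_2\}$ is $\QQ$-linearly independent: any nontrivial relation $\sum_{i\le n} q_i c_{\alpha_i} = 0$ with $\alpha_n$ the largest index and $q_n \neq 0$ would place $c_{\alpha_n}$ inside $V[G\restriction(\omega_2 \setminus\{\alpha_n\})]$, contradicting the product Cohen property that $c_{\alpha_n}$ is Cohen-generic over this submodel.

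Next, I would prove $|L \cap M| \le \aleph_0$ for every meager Borel set $M \in V[G]$. By ccc-absoluteness of Borel codes and the usual nice-name argument, $M$ has a Borel code in $V[G\restriction A]$ for some countable $A\subset\omega_2$. Consider a nonzero element $x = q_1 c_{\alpha_1} + \dots + q_n c_{\alpha_n}$ of $L$ with all $q_i\neq 0$ and $\alpha_1<\dots<\alpha_n$. If $\alpha_n \notin A$, let $B = A\cup\{\alpha_1,\dots,\alpha_{n-1}\}$; then $c_{\alpha_n}$ is Cohen over $V[G\restriction B]$, and the set
$$
N = \{ y \in \RR : q_1 c_{\alpha_1}+\dots+q_{n-1}c_{\alpha_{n-1}} + q_n y \in M \}
$$
is a meager Borel set coded in $V[G\restriction B]$ (translate and rescale $M$). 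By Cohen-genericity $c_{\alpha_n}\notin N$, hence $x\notin M$. Therefore every $x\in L\cap M$ must be a $\QQ$-combination of $\{c_\alpha : \alpha\in A\}$, and there are only countably many such combinations.

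The construction is the essentially the standard "Cohen reals form a Luzin set" argument, and the main obstacle is simply checking that the Luzin property is stable under taking the $\QQ$-span, which is exactly what the translation $M \mapsto N$ above achieves: translating a meager Borel set and rescaling by a nonzero rational keeps it meager Borel and still coded in the relevant intermediate submodel, so Cohen-genericity of the top-index real still applies.
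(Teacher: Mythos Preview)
Your proof is correct and follows essentially the same approach as the paper: add $\omega_2$ Cohen reals over a model of CH, take $L$ to be their $\QQ$-span, and argue that any $x\in L$ whose support contains an index outside the countable set $A$ coding the meager set must avoid that set by Cohen genericity. One cosmetic slip: you isolate the \emph{largest} index $\alpha_n$, but the conclusion ``every $x\in L\cap M$ lies in $\mathrm{span}_{\QQ}\{c_\alpha:\alpha\in A\}$'' requires isolating an \emph{arbitrary} index $\alpha_j\notin A$ and setting $B=A\cup\{\alpha_i:i\neq j\}$ (the same argument then goes through verbatim), which is precisely how the paper phrases it.
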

\begin{proof}
 Let us work in a model $V'$ obtained from a model $V$ of $CH$ by adding $\omega_2$ Cohen reals $\{ c_\alpha:\ \alpha<\omega_2\}$. Set 
 $$
 L=span_{\mathbb Q}(\{c_\alpha:\ \alpha<\omega_2\}).
 $$ 
 We claim that $L$ is a Luzin set. Indeed, take any meager Borel set $A$. Its Borel code is in $V[\{c_\alpha:\ \alpha\in \tilde{A}\}],$ where $\tilde{A}$ is a countable subset of $\omega_2.$ We claim that 
$$L\cap A\subseteq span_{\mathbb{Q}}(\{c_\alpha:\ \alpha\in\tilde{A}\}).
$$ 
Let $x\in L\setminus span_{\mathbb{Q}}(\{c_\alpha:\ \alpha\in\tilde{A}\}).$ Then $x=y+qc_\beta$, where $q\in\mathbb{Q}\setminus\{0\}$ and $\beta\notin\tilde{A}$ and $y\in Span_{\mathbb Q}(\{c_\alpha:\ \alpha\neq\beta\}.$ Notice that $c_\beta\notin\frac{1}{q}(A-y),$ what implies that $x\notin A.$
\end{proof}

The above observation leads us to a natural question.
\begin{problem}
 Does the existance of a Luzin set imply the existance of a Luzin set which is a linear subspace of R?
\end{problem}

The following theorem shows how partition $\RR$ into translations of an $\mc{I}$-Luzin set.
\begin{myth}
Let $L$ be a linearly independent $\mc{I}$-Luzin set. Then there exists a set $X$ such that $\{x+L: x\in X\}$ is a partition of $\RR$.
\end{myth}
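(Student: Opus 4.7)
The plan is to first reduce to partitioning $V := span_{\QQ}(L)$ into translates of $L$, then construct this partition of $V$ by a transfinite recursion of length $\continuum$ that exploits the linear independence of $L$.

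Extend $L$ to a Hamel basis $L \cup B$ of $\RR$ over $\QQ$ and set $W = span_{\QQ}(B)$, so that $\RR = V \oplus W$. If $X_0 \subseteq V$ is such that $\{x_0 + L : x_0 \in X_0\}$ partitions $V$, then $X := X_0 + W$ works for $\RR$: every $r \in \RR$ writes uniquely as $r = v + w$ with $v \in V$, $w \in W$; in turn $v = x_0 + l$ uniquely with $x_0 \in X_0$ and $l \in L$; then $r = (x_0 + w) + l$, and the two uniqueness claims combine to show that $r$ lies in the translate $(x_0 + w) + L$ for a unique element $x_0 + w \in X$.

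To build $X_0$, enumerate $V = \{v_\alpha : \alpha < \continuum\}$ (possible since $|V| = \continuum$). Recursively: if $v_\alpha$ is already covered by some $x_\beta + L$ with $\beta < \alpha$, do nothing; otherwise pick $l_\alpha \in L$ and set $x_\alpha := v_\alpha - l_\alpha$. Disjointness of the family built so far requires $x_\alpha - x_\beta \notin L - L$ for every previously defined $x_\beta$, equivalently $l_\alpha \notin v_\alpha - x_\beta + L - L$. The heart of the argument is the counting observation that for every $c \in V \setminus L$,
\[
|L \cap (c + L - L)| \leq 2.
\]
Indeed, writing $l = c + l_1 - l_2 \in L$ as $c = l - l_1 + l_2$ and expanding in the basis $L$, linear independence forces $supp(c) \subseteq \{l, l_1, l_2\}$ and coefficient sum equal to $1$; a short case split on $|supp(c)|$ shows that either $c \in L$ (excluded), or no solution exists, or $c$ has coefficient pattern $(+1,+1,-1)$ on a three-element support (in which case $l$ is one of two candidates), or pattern $(2,-1)$ on a two-element support (in which case $l$ is uniquely determined).

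The hypothesis that $v_\alpha$ is not yet covered gives $v_\alpha - x_\beta \notin L$ for every previously defined $x_\beta$, so the counting lemma applies to each $\beta < \alpha$, and the forbidden set of $l_\alpha$'s has size at most $2|\alpha| < \continuum = |L|$, leaving legal choices. After $\continuum$ stages, covering is guaranteed by the inductive invariant and disjointness by the choice condition at each stage, so $X_0 = \{x_\alpha : x_\alpha \text{ defined}\}$ partitions $V$. The main technical obstacle is the counting lemma — specifically, the degenerate cases where $l, l_1, l_2$ coincide — but these are handled uniformly by tracking the coefficient-sum functional $S : V \to \QQ$ sending every $h \in L$ to $1$: it takes value $1$ on $L$ and $0$ on $L - L$, ruling out most potential solutions by simple arithmetic.
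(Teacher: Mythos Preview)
Your proof is correct and follows the same core strategy as the paper's: build $X$ by transfinite recursion, at each stage choosing $l\in L$ so that the new translate is disjoint from all earlier ones, with linear independence of $L$ bounding the set of forbidden $l$'s below $\continuum$.

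The paper does not make your preliminary decomposition $\RR = V \oplus W$; it works directly over $\RR$, setting $x_\xi = r_\xi - l$ and observing that if $(x_\xi+L)\cap(x_\alpha+L)\ne\emptyset$ then $r_\xi - x_\alpha = l - l' + l''$, whence $l$ must lie in the finite support $Y_\alpha$ of $r_\xi - x_\alpha$ over $L$ (implicitly $Y_\alpha=\emptyset$ when $r_\xi - x_\alpha\notin span_\QQ(L)$), and then picks $l\in L\setminus\bigcup_{\alpha<\xi}Y_\alpha$. Your reduction to $V=span_\QQ(L)$ with $X = X_0 + W$ factors out exactly the cases the paper handles implicitly (distinct $W$-components give automatic disjointness), and your explicit bound $|L\cap(c+L-L)|\le 2$ via the coefficient-sum functional is sharper and more carefully justified than the paper's somewhat loose claim that the representation $l-l'+l''$ is ``unique'' --- in the $(1,1,-1)$ pattern there are in fact two choices for $l$, though the paper's conclusion that $l\in Y_\alpha$ remains valid. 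In substance the two arguments coincide; yours is tidier in its bookkeeping.
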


\begin{proof}
We shall construct inductively a sequence $\langle x_{\alpha}: \alpha<\continuum\rangle$. Let $x_{0}=r_{0}-l$ for some $l\in L$. At the step $\xi<\continuum$ we want to choose such a $x_{\xi}$ that for each $\alpha<\xi$ we gonna have:
$$
(x_{\xi}+L)\cap(x_{\alpha}+L)=\emptyset
$$
and $x_{\xi}+l=r_{\xi}$ for some $l\in L$. Let us fix some $l\in L$, set $x_{\xi}=r_{\xi}-l$ and suppose that for some $\alpha<\xi$:
$$
(x_{\xi}+L)\cap(x_{\alpha}+L)\neq\emptyset.
$$
It means that there exist some $l', l''\in L$ that:
\begin{eqnarray*}
r_{\xi}-l+l' & = & x_{\alpha}+l'',
\\
r_{\xi}-x_{\alpha} & = & l-l'+l''.
\end{eqnarray*}
Such a representation is unique. Let us denote for each $\alpha<\xi$ a set $Y_{\alpha}$ of these elements that take part in the representation of $r_{\xi}-x_{\alpha}$. Then it is possible and sufficient to choose $l\in L\bez\bigcup_{\alpha<\xi}Y_{\alpha}$ and set $x_{\xi}=r_{\xi}-l$.
\\
This finishes the construction and $X=\{x_{\alpha}: \alpha<\continuum\}$ is the set.

\end{proof}

\begin{myremark}
We could use linearly independent strong $\mc{I}$-Luzin set as well.
\end{myremark}

Assuming CH implies that $X$ also may be a $\mc{I}$-Luzin set.
\begin{myth}[CH] For each $\mc{I}$-Luzin set $L$ there exists an $\mc{I}$-Luzin set $X$ such that $\{x+L: x\in X\}$ is a partition of $\RR$.
\end{myth}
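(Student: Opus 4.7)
The plan is to rerun the preceding theorem's transfinite recursion under CH, tightening the choice at each stage so that the resulting transversal $X$ is itself $\mc{I}$-Luzin. As in that theorem we implicitly take $L$ linearly independent over $\QQ$, which we may arrange by passing to a linearly independent $\mc{I}$-Luzin set via the earlier lemma if necessary. Under CH, fix enumerations $\RR=\{r_\alpha:\alpha<\omega_1\}$ and $\mc{B}\cap\mc{I}=\{I_\alpha:\alpha<\omega_1\}$, and note $|L|=\aleph_1$.

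I would construct $\langle x_\alpha:\alpha<\omega_1\rangle$ recursively. At stage $\alpha$: if $r_\alpha\in\bigcup_{\beta<\alpha}(x_\beta+L)$, set $x_\alpha$ equal to the unique $x_\beta$ whose translate already covers $r_\alpha$. Otherwise, pick $l\in L$ so that, upon setting $x_\alpha:=r_\alpha-l$, both (i) $x_\alpha+L$ is disjoint from every earlier $x_\beta+L$, and (ii) $x_\alpha\notin\bigcup_{\gamma\leq\alpha}I_\gamma$. By the analysis in the previous theorem, (i) forbids only a finite subset $Y_\beta\subseteq L$ for each $\beta<\alpha$ (using linear independence of $L$), contributing countably many forbidden $l$ in total. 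Condition (ii) translates to $l\notin r_\alpha-\bigcup_{\gamma\leq\alpha}I_\gamma$; the latter set lies in $\mc{I}$ by translation invariance and the $\sigma$-ideal property, so $L$ meets it in a countable set by the $\mc{I}$-Luzin property of $L$. The total forbidden subset of $L$ is countable, while $|L|=\aleph_1$, so a valid $l$ is available.

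Put $X:=\{x_\alpha:\alpha<\omega_1\}$. Then $\{x+L:x\in X\}$ consists of pairwise disjoint translates covering every $r_\alpha$, so it forms a partition of $\RR$. For each $\gamma$, the clause (ii) applied at every active $\alpha\geq\gamma$ gives $X\cap I_\gamma\subseteq\{x_\beta:\beta<\gamma\}$, which is countable. To see $|X|=\continuum$, observe that if $X$ were countable then $(X+L)\cap I=\bigcup_{x\in X}\bigl(x+(L\cap(I-x))\bigr)$ would be countable for every $I\in\mc{I}$ (by translation invariance and the $\mc{I}$-Luzin property of $L$), and since $X+L=\RR$, every Borel member of $\mc{I}$ would be countable; this contradicts the existence of uncountable Borel sets in $\mc{I}$, which follows from the paper's standing assumptions (in particular, the Steinhaus property forces $\mc{I}$ to contain perfect-type Borel sets).

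The main obstacle is satisfying the disjointness and Luzin clauses simultaneously at every recursion stage. The first contributes only finitely many forbidden $l$'s per earlier translate (thanks to linear independence), and the second contributes only countably many (thanks to the $\mc{I}$-Luzin-ness of $L$); their union stays countable at each of the $\omega_1$ stages, which is precisely what keeps the recursion from terminating prematurely.
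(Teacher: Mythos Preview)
Your argument follows the paper's almost verbatim: both run the transfinite recursion under CH, writing $x_\xi=r_\xi-l$ and choosing $l\in L$ outside a countable forbidden set split into the ``disjointness'' part (the paper's $Y_\alpha$) and the ``Luzin'' part (the paper's $Z=\{l\in L:r_\xi-l\in\bigcup_{\alpha<\xi}I_\alpha\}$, countable because $L$ is $\mc{I}$-Luzin and $\mc{I}$ is translation invariant). You are in fact a bit more careful than the paper on two points: you explicitly handle the stages where $r_\alpha$ is already covered, and you supply an argument that $|X|=\continuum$; the paper is silent on both.

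The one genuine gap is your treatment of linear independence. The theorem is stated for an \emph{arbitrary} $\mc{I}$-Luzin set $L$, so ``passing to a linearly independent $\mc{I}$-Luzin set via the earlier lemma'' is not legitimate---that would partition $\RR$ by translates of the replacement set, not of the given $L$. The paper's proof carries the same tacit assumption (its $Y_\alpha$ is lifted from the preceding theorem, which needed independence to make each $Y_\alpha$ finite), so you are no worse off than the paper here; but your attempted repair does not work. Indeed the assertion as stated cannot hold for every $L$: under CH one can build, by the recipe of the paper's later ``$L+L=\RR$'' construction, an $\mc{I}$-Luzin set with $L-L=\RR$, and for such $L$ any two translates $x+L$, $x'+L$ intersect, so no partition of $\RR$ into translates of $L$ exists at all. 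Thus both your proof and the paper's should be read as establishing the result for linearly independent $L$.
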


\begin{proof}
We will construct inductively a sequence $\langle x_{\alpha}: \alpha<\continuum \rangle$. For each $\xi<\continuum$ we choose $x_{\xi}$ such that:
\begin{enumerate}
\item $x_{\xi}\notin \bigcup_{\alpha<\xi}M_{\alpha}$,
\item $(\forall \alpha<\xi)((x_{\xi}+L)\cap(x_{\alpha}+L)=\emptyset)$,
\item $x_{\xi}+l=r_{\xi}$ for some $l\in L$.
\end{enumerate}
To meet these conditions let us consider some helpful sets. Similarly to previous proof we define for each $\alpha<\xi$ a set $Y_{\alpha}$ of these elements that take part in a representation of $r_{\xi}-x_{\alpha}$. Moreover, denote $Z=\{l\in L: r_{\xi}-l\in\bigcup_{\alpha<\xi}M_{\alpha}\}$. Then we can set $x_{\xi}=r_{\xi}-l$ for some $l\in L\bez(\bigcup_{\alpha<\xi}Y_{\alpha}\cup Z)$.

This finishes the construction and $X=\{x_{\alpha}: \alpha<\continuum\}$ is the set.
\end{proof}

\begin{myth}[CH] 
There exists an $\mc{I}$-Luzin set $L$ such that $L+L$ is an $\mc{I}$-Luzin set.
\end{myth}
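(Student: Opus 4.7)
The plan is to build $L=\{l_\xi:\xi<\continuum\}$ by transfinite recursion of length $\omega_1$, at each stage simultaneously diagonalising against $L\cap I_\alpha$ (to make $L$ itself $\mc{I}$-Luzin) and against $(L+L)\cap I_\alpha$ (to make $L+L$ an $\mc{I}$-Luzin set). I would use the enumeration $\{I_\alpha:\alpha<\continuum\}$ of Borel members of $\mc{I}$ fixed at the start of this subsection.

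At stage $\xi<\continuum$, with $\{l_\beta:\beta<\xi\}$ already chosen, define the forbidden set
$$
F_\xi=\{l_\beta:\beta<\xi\}\;\cup\;\bigcup_{\alpha\leq\xi}I_\alpha\;\cup\;\bigcup_{\alpha\leq\xi}\tfrac{1}{2}I_\alpha\;\cup\;\bigcup_{\alpha\leq\xi,\,\beta<\xi}(I_\alpha-l_\beta)
$$
and pick $l_\xi\in\RR\bez F_\xi$. Translation invariance puts every $I_\alpha-l_\beta$ in $\mc{I}$, scale invariance puts every $\tfrac12 I_\alpha$ in $\mc{I}$, and since $\xi$ is countable the whole union $F_\xi$ lies in $\mc{I}$ by the $\sigma$-ideal property. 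Nontriviality of $\mc{I}$ then guarantees that $\RR\bez F_\xi\neq\emptyset$, so the recursion can be carried through.

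To verify that $L=\{l_\xi:\xi<\continuum\}$ works, note first that $|L|=\continuum$ because the clause $l_\xi\notin\{l_\beta:\beta<\xi\}$ makes the points distinct. For a fixed $\alpha<\continuum$ the condition $l_\xi\notin I_\alpha$ for $\xi\geq\alpha$ yields $L\cap I_\alpha\subseteq\{l_\xi:\xi<\alpha\}$, which is countable, so $L$ is $\mc{I}$-Luzin. Now suppose $l_\beta+l_\gamma\in I_\alpha$ with $\beta\leq\gamma$. If $\beta<\gamma$, then $l_\gamma\in I_\alpha-l_\beta$, so the stage-$\gamma$ exclusion (applied to the indices $\alpha\leq\gamma$, $\beta<\gamma$) forces $\gamma<\alpha$; if $\beta=\gamma$, then $l_\gamma\in\tfrac12 I_\alpha$, and the same exclusion again forces $\gamma<\alpha$. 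In either case both indices lie below $\alpha$, so $(L+L)\cap I_\alpha$ comes from countably many pairs and is therefore countable. Combined with $|L+L|=\continuum$ (immediate from $L\subset L+L$ after shifting, or from $|L|^2=\continuum$), this shows $L+L$ is $\mc{I}$-Luzin.

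The only real obstacle is verifying that every piece appearing in $F_\xi$ is genuinely an $\mc{I}$-set, which is exactly what translation invariance, scale invariance, and the $\sigma$-ideal property hand us; the crucial ingredient without which the argument fails is scale invariance, needed to handle the diagonal sums $2l_\xi$ via $\tfrac12 I_\alpha$.
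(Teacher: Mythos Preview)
Your proof is correct and follows essentially the same approach as the paper: the paper chooses $l_\xi\notin\bigl(\bigcup_{\alpha<\xi}I_\alpha-(\{l_\alpha:\alpha<\xi\}\cup\{0\})\bigr)\cup\tfrac{1}{2}\bigcup_{\alpha<\xi}I_\alpha$, which is the same forbidden set you wrote down (the $-\{0\}$ term plays the role of your separate $\bigcup I_\alpha$ summand). Your version is a little more careful in explicitly enforcing distinctness of the $l_\xi$ and in spelling out the verification that $(L+L)\cap I_\alpha$ is countable, but the underlying construction is identical.
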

\begin{proof}
Let us choose a sequence $\langle l_{\alpha}: \alpha<\continuum\rangle$ such that for each $\xi<\continuum$:
$$
	l_{\xi}\notin(\bigcup_{\alpha<\xi}I_{\alpha}-(\{l_{\alpha}: \alpha<\xi\}\cup\{0\}))\cup\frac{1}{2}\bigcup_{\alpha<\xi}I_{\alpha}.
$$
Then $L=\{l_{\alpha}: \alpha<\continuum\}$ is the set.

\end{proof}

The following theorem is actually a folklore, but in the name of completeness we give its (short) proof.
\begin{myth}[CH] 
There exists an $\mc{I}$-Luzin set $L$ such that $L+L=\RR$.
\end{myth}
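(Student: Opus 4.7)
The plan is to run a transfinite recursion of length $\continuum = \omega_1$ (using CH) that simultaneously kills all Borel members of $\mc{I}$ and forces every real to appear as a sum. Fixing the enumerations $\{I_\alpha:\alpha<\continuum\}$ of Borel sets in $\mc{I}$ and $\{r_\alpha:\alpha<\continuum\}$ of $\RR$ already provided in the excerpt, at stage $\xi<\continuum$ I would pick $a_\xi,b_\xi\in\RR$ with
\[
a_\xi+b_\xi=r_\xi,\qquad a_\xi,b_\xi\notin\bigcup_{\alpha\leq\xi}I_\alpha.
\]

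The point is that this choice is always available. Under CH, $|\xi+1|\leq\aleph_0$, so $J_\xi:=\bigcup_{\alpha\leq\xi}I_\alpha\in\mc{I}$ because $\mc{I}$ is a $\sigma$-ideal. Since $\mc{I}$ is translation and scale invariant, $r_\xi-J_\xi\in\mc{I}$, and hence $J_\xi\cup(r_\xi-J_\xi)\in\mc{I}$; being a proper subset of $\RR$ (nontriviality), its complement contains some $a_\xi$, and one sets $b_\xi:=r_\xi-a_\xi$, which automatically avoids $J_\xi$ as well.

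I would then define $L=\{a_\xi,b_\xi:\xi<\continuum\}$ and verify the three required properties. First, $L+L\supseteq\{r_\xi:\xi<\continuum\}=\RR$, so $L+L=\RR$. Second, $|L|=\continuum$: the inequality $|L+L|\leq|L|^2$ together with $L+L=\RR$ forces $|L|\geq\continuum$, while $|L|\leq 2\cdot\continuum=\continuum$. Third, $L$ is $\mc{I}$-Luzin: any $I\in\mc{I}$ is contained in some Borel $I_{\alpha_0}\in\mc{I}$ by the Borel base assumption, and for every $\xi\geq\alpha_0$ both $a_\xi$ and $b_\xi$ lie outside $I_{\alpha_0}\supseteq I$; therefore $L\cap I\subseteq\{a_\beta,b_\beta:\beta<\alpha_0\}$, a set of cardinality at most $2\cdot|\alpha_0|\leq\aleph_0$ since $\alpha_0<\omega_1$.

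There is essentially no hard step: the construction is a one-variable variant of the pattern already exploited in Theorem \ref{homeosuperhamel} and the preceding additive theorems, and the only place invariance of $\mc{I}$ is used is in the observation that $r_\xi-J_\xi\in\mc{I}$. The subtlest bookkeeping point is noticing that pairs $(a_\xi,b_\xi)$ and later pairs may accidentally coincide as elements of $L$, but this is harmless: the cardinality of $L$ is controlled by $L+L=\RR$ rather than by distinctness of the chosen points.
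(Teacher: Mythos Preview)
Your proof is correct and follows essentially the same approach as the paper: at each stage $\xi$ pick a pair summing to $r_\xi$ while avoiding the (countable under CH) union of Borel $\mc{I}$-sets seen so far, using translation/scale invariance to see that the forbidden set $J_\xi\cup(r_\xi-J_\xi)$ lies in $\mc{I}$. Your write-up is in fact more careful than the paper's, since you explicitly verify $|L|=\continuum$ and the $\mc{I}$-Luzin property via the Borel base.
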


\begin{proof}
Let us choose two sequences $\langle l_{\alpha}: \alpha<\continuum\rangle$ and $\langle l'_{\alpha}: \alpha<\continuum\rangle$ such that for each $\xi<\continuum$:
\begin{enumerate}
\item $l_{\xi}, l'_{\xi}\notin\bigcup_{\alpha<\xi}I_{\alpha}$,
\item $l_{\xi}+l'_{\xi}=r_{\xi}$.
\end{enumerate}
Condition (1) implies that $l'_{\xi}=r_{\xi}-l_{\xi}$ and by (2) we should pick:
$$
l_{\xi}\notin\bigcup_{\alpha<\xi}I_{\alpha}\cup(r_{\xi}-\bigcup_{\alpha<\xi}I_{\alpha}).
$$
This finishes the construction and $L=\{l_{\alpha}: \alpha<\continuum\}\cup\{l'_{\alpha}: \alpha<\continuum\}$ is the set.

\end{proof}

\begin{mylemma}
For each $A$, $B$:
$$
\bigoplus^{n}(A\cup B)=\bigcup_{k=0}^{n}(\bigoplus^{k}A+\bigoplus^{n-k}B).
$$
\end{mylemma}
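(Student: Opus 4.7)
The plan is to prove this combinatorial identity by a direct double inclusion, which is essentially bookkeeping on representations as Minkowski sums. The only preliminary point is to adopt the convention $\bigoplus^{0}C=\{0\}$ so that the $k=0$ and $k=n$ terms of the right-hand side make sense and agree with $\bigoplus^{n}B$ and $\bigoplus^{n}A$ respectively.

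For the inclusion $\bigoplus^{n}(A\cup B)\subseteq\bigcup_{k=0}^{n}(\bigoplus^{k}A+\bigoplus^{n-k}B)$, I would take an arbitrary $x\in\bigoplus^{n}(A\cup B)$ and write $x=x_{1}+\cdots+x_{n}$ with each $x_{i}\in A\cup B$. Partition the index set $\{1,\ldots,n\}$ into $I_{A}=\{i:x_{i}\in A\}$ and $I_{B}=\{1,\ldots,n\}\setminus I_{A}$, breaking ties arbitrarily for indices with $x_{i}\in A\cap B$. Setting $k=|I_{A}|$, the partial sum over $I_{A}$ lies in $\bigoplus^{k}A$, the partial sum over $I_{B}$ lies in $\bigoplus^{n-k}B$, so $x\in\bigoplus^{k}A+\bigoplus^{n-k}B$, which sits inside the right-hand side.

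For the reverse inclusion, fix any $k\in\{0,\ldots,n\}$ and any $y\in\bigoplus^{k}A+\bigoplus^{n-k}B$. Writing $y=a_{1}+\cdots+a_{k}+b_{1}+\cdots+b_{n-k}$ with $a_{i}\in A$ and $b_{j}\in B$, each summand is an element of $A\cup B$, so $y$ is expressed as a sum of $n$ elements of $A\cup B$, hence $y\in\bigoplus^{n}(A\cup B)$.

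There is no real obstacle; induction on $n$ using $\bigoplus^{n+1}C=\bigoplus^{n}C+C$ and the distributivity $X+(Y\cup Z)=(X+Y)\cup(X+Z)$ provides an equally clean alternative. The only thing requiring a moment's care is the convention for $\bigoplus^{0}$ and verifying that the resulting indexing reshuffles correctly in the inductive step; once that is fixed, the identity collapses to the binomial-style expansion above.
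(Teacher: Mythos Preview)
Your proof is correct; the double-inclusion argument with the convention $\bigoplus^{0}C=\{0\}$ is exactly what is needed, and there are no gaps. The paper itself states this lemma without proof, so there is nothing further to compare; your write-up simply supplies the routine verification the authors left to the reader.
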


\begin{myth}[CH] 
For each $n\in\NN\bez\{0\}$ There exists an $\mc{I}$-Luzin set $L$  such that $\bigoplus^{n}L$ is an $\mc{I}$-Luzin set and $\bigoplus^{n+1}L=\RR$.
\end{myth}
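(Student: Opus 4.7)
The plan is to push the transfinite recursion of the two previous theorems one stage further, adding $n+1$ new points to $L$ at each of the $\mf{c}$ steps rather than one or two. Using the enumerations fixed at the start of this subsection, and working under CH, I would build a double sequence $\langle l_{\xi,i}:\xi<\continuum,\ 1\le i\le n+1\rangle$ and take $L=\{l_{\xi,i}:\xi<\continuum,\ 1\le i\le n+1\}$, writing $L_{<\xi}:=\bigcup_{\zeta<\xi}\{l_{\zeta,i}:1\le i\le n+1\}$. At stage $\xi$ I would impose:
(i) $\sum_{i=1}^{n+1}l_{\xi,i}=r_\xi$, which immediately forces $r_\xi\in\bigoplus^{n+1}L$ and hence $\bigoplus^{n+1}L=\RR$;
(ii) $l_{\xi,i}\notin\bigcup_{\alpha<\xi}I_\alpha$ for each $i$, making $L$ itself an $\mc{I}$-Luzin set;
(iii) for every tuple $(c_1,\dots,c_{n+1})\in\NN^{n+1}$ with $\sum c_i\le n$ and $\max c_i\ge 1$, every multiset $M\subseteq L_{<\xi}$ of size $n-\sum c_i$, and every $\alpha<\xi$, the element $\sum_{i=1}^{n+1}c_il_{\xi,i}+\sum_{x\in M}x$ lies outside $I_\alpha$.
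Clause (iii) says exactly that every new element of $\bigoplus^n L$ produced at stage $\xi$ avoids every $\mc{I}$-set enumerated so far, whence $\bigoplus^n L\cap I_\alpha\subseteq\bigoplus^n L_{\le\alpha}$, a countable set under CH, so $\bigoplus^{n}L$ is $\mc{I}$-Luzin.

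The stage-$\xi$ choice is carried out by substituting $l_{\xi,n+1}=r_\xi-\sum_{i=1}^n l_{\xi,i}$ and picking $l_{\xi,1},\dots,l_{\xi,n}$ in order. After substitution, each occurrence of (iii) becomes
$$\sum_{i=1}^n(c_i-c_{n+1})\,l_{\xi,i}\notin I_\alpha-c_{n+1}r_\xi-\sum_{x\in M}x,$$
whose right-hand side lies in $\mc{I}$ by translation invariance. The algebraic observation that makes everything go through is that $(c_1-c_{n+1},\dots,c_n-c_{n+1})$ cannot be the zero vector: if it were, then $c_1=\dots=c_{n+1}=c$, so $(n+1)c=\sum c_i\le n$ would force $c=0$, contradicting $\max c_i\ge 1$. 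Setting $j^*(\vec c)=\max\{j\le n:c_j\ne c_{n+1}\}$, the displayed condition becomes---once $l_{\xi,1},\dots,l_{\xi,j^*-1}$ have already been fixed---a single-variable exclusion $l_{\xi,j^*}\notin J$ for some translated and rescaled copy $J$ of $I_\alpha$, still a member of $\mc{I}$ by translation and scale invariance. Thus when picking $l_{\xi,j}$ I only need to avoid a countable union of $\mc{I}$-sets (the constraints with $j^*=j$, the $I_\alpha$'s coming from (ii), and a countable set of points keeping the $l_{\xi,i}$ pairwise distinct from each other and from $L_{<\xi}$), which is itself in $\mc{I}$ and hence a proper subset of $\RR$; the choice is viable.

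The verification afterwards is routine and follows the bullet points of the first paragraph. The main obstacle is therefore not a new idea but the combinatorial bookkeeping: at each substep one must keep track of all tuples $(\vec c,M,\alpha)$ whose target index $j^*$ equals the variable currently being chosen, and must check that the substitution always leaves at least one nonzero coefficient among the still-free variables. Once the one-line arithmetic ``$(n+1)c\ne m$ for integers $c\ge 1$ and $1\le m\le n$'' is isolated, the recursion proceeds in the exact spirit of the two $n=1$ theorems preceding this one, which correspond to dropping one or the other of the conclusions concerning $\bigoplus^n L$ and $\bigoplus^{n+1}L$.
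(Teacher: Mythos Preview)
Your proposal is correct and follows essentially the same transfinite recursion as the paper: at stage $\xi$ pick $n+1$ new points summing to $r_\xi$, while ensuring that every $n$-fold sum that uses at least one new point avoids $\bigcup_{\alpha<\xi}I_\alpha$. The paper parameterizes these constraints by the pair $(j,n-i)$ where $j$ is the largest index of a new point appearing and $n-i$ its multiplicity, then unwinds recursively via the lemma $\bigoplus^{n}(A\cup B)=\bigcup_{k}(\bigoplus^{k}A+\bigoplus^{n-k}B)$; you parameterize instead by the full multiplicity vector $\vec c$ and isolate the last free variable $j^*(\vec c)$ with nonzero coefficient after the substitution $l_{\xi,n+1}=r_\xi-\sum_{i\le n}l_{\xi,i}$. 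Your arithmetic observation that $(c_1-c_{n+1},\dots,c_n-c_{n+1})\neq 0$ (else $(n+1)c\le n$ with $c\ge 1$) is exactly the content of the paper's verification that the leading coefficient never vanishes, just packaged more cleanly.

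One small gap to patch: clause~(ii) for $i=n+1$ is not handled by your description of ``the $I_\alpha$'s coming from (ii)'' when picking $l_{\xi,j}$, since $l_{\xi,n+1}$ is not picked but determined. After substitution this becomes $\sum_{j=1}^{n}l_{\xi,j}\notin r_\xi-I_\alpha$, a constraint with coefficient vector $(1,\dots,1)$ and hence $j^*=n$; simply add it to the list of exclusions when choosing $l_{\xi,n}$. (Alternatively, note that clause~(ii) is in fact redundant: if $\bigoplus^{n}L$ is $\mc{I}$-Luzin then so is $L$, since $l\mapsto nl$ injects $L\cap I$ into $\bigoplus^{n}L\cap nI$ and $nI\in\mc{I}$ by scale invariance.)
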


\begin{proof}
Let us choose for each $\xi<\continuum$ a sequence $\langle l_{k}^{\xi}: 1\leq k\leq n+1 \rangle$ such that for all $i$, $0\leq i \leq n-1$ and for all $J$, $1\leq j \leq n+1$:
\begin{itemize}
\item $(n-i)l_{j}^{\xi}+\bigoplus^{i}(L_{\xi}\cup\{l_{k}^{\xi}: 1\leq k\leq j-1\})\subset(\bigcup_{\alpha<\xi}I_{\alpha})^{c}$,
\item $\Sigma_{k=1}^{n+1}l_{k}^{\xi}=r_{\xi}$,
\end{itemize}
where $L_{\xi}=\{l_{k}^{\alpha}: 1\leq k \leq n+1 \land \alpha<\xi\}$. Let us check what are the conditions for $l_{n+1}^{\xi}$. For each $i$, $0\leq i \leq n-1$:
$$
(n-i)(r_{\xi}-\sum_{k=1}^{n}l_{k}^{\xi})+\bigoplus^{i}(L_{\xi}\cup\{l^{\xi}_{1}, \ldots, l^{\xi}_{n}\})\subset(\bigcup_{\alpha<\xi}I_{\alpha})^{c}.
$$
Since:
\begin{eqnarray*}
\bigoplus^{i}(L_{\xi}\cup\{l^{\xi}_{1}, \ldots, l^{\xi}_{n}\}) & = & \bigcup_{j=0}^{i}(\bigoplus^{j}(L_{\xi}\cup\{l^{\xi}_{1}, \ldots,l^{\xi}_{n-1}\})+\bigoplus^{i-j}\{l^{\xi}_{n}\})
\\
& = & \bigcup_{j=0}^{i}(\bigoplus^{j}(L_{\xi}\cup\{l^{\xi}_{1}, \ldots,l^{\xi}_{n-1}\})+(i-j)l^{\xi}_{n})
\end{eqnarray*}
we have:
\begin{eqnarray*}
(n-i)(r_{\xi}-\sum_{k=1}^{n}l_{k}^{\xi})+\bigoplus^{i}(L_{\xi}\cup\{l^{\xi}_{1}, \ldots, l^{\xi}_{n}\}) & =
\\ =(n-i)(r_{\xi}-\sum_{k=1}^{n}l_{k}^{\xi})+\bigcup_{j=0}^{i}(\bigoplus^{j}(L_{\xi}\cup\{l^{\xi}_{1}, \ldots,l^{\xi}_{n-1}\})+(i-j)l^{\xi}_{n})= & 
\\
=\bigcup_{j=0}^{i}((n-i)(r_{\xi}-\sum_{k=1}^{n}l_{k}^{\xi})+\bigoplus^{j}(L_{\xi}\cup\{l^{\xi}_{1}, \ldots,l^{\xi}_{n-1}\})+(i-j)l^{\xi}_{n}).
\end{eqnarray*}
The latest is a subset of $(\bigcup_{\alpha<\xi}I_{\alpha})^{c}$ if for all $j$, $ 0\leq j\leq i$:
\begin{eqnarray*}
(n-i)(r_{\xi}-\sum_{k=1}^{n}l_{k}^{\xi})+\bigoplus^{j}(L_{\xi}\cup\{l^{\xi}_{1}, \ldots,l^{\xi}_{n-1}\})+(i-j)l^{\xi}_{n} & \subset & (\bigcup_{\alpha<\xi}I_{\alpha})^{c},
\\
(n-i)(r_{\xi}-\sum_{k=1}^{n}l_{k}^{\xi})+(i-j)l^{\xi}_{n} & \notin & \bigcup_{\alpha<\xi}I_{\alpha}-\bigoplus^{j}(L_{\xi}\cup\{l^{\xi}_{1}, \ldots,l^{\xi}_{n-1}\})
\end{eqnarray*}
and finally:
$$
(n-j)l_{n}^{\xi}\notin\bigcup_{\alpha<\xi}M_{\alpha}-\bigoplus^{j}(L_{\xi}\cup\{l^{\xi}_{1}, \ldots,l^{\xi}_{n-1}\})+(i-n)r_{\xi}+(n-i)\sum_{k=1}^{n-1}l_{k}^{\xi},
$$
so if we are able to choose rest of the points, then we may also pick proper $l_{n}^{\xi}$. Let us check then the conditions for $l_{k}^{\xi}$, $k<n$. We have that for all $j$, $ 0\leq j\leq n-1$:
\begin{eqnarray*}
(n-j)l_{k}^{\xi}+\bigoplus^{j}(L_{\xi}\cup\{l_{1}^{\xi}, \ldots, l_{k}^{\xi}\})&\subset&(\bigcup_{\alpha<\xi}I_{\alpha})^{c},
\\
(n-j)l_{k}^{\xi}+\bigcup_{i=0}^{j}(\bigoplus^{i}(L_{\xi}\cup\{l_{1}^{\xi}, \ldots, l_{k-1}^{\xi}\})+\bigoplus^{j-i}l_{k}^{\xi})&\subset&(\bigcup_{\alpha<\xi}I_{\alpha})^{c},
\\
\bigcup_{i=0}^{j}(\bigoplus^{i}(L_{\xi}\cup\{l_{1}^{\xi}, \ldots, l_{k-1}^{\xi}\})+\bigoplus^{j-i}l_{k}^{\xi})&\subset&(\bigcup_{\alpha<\xi}I_{\alpha})^{c},
\\
\bigcup_{i=0}^{j}(\bigoplus^{i}(L_{\xi}\cup\{l_{1}^{\xi}, \ldots, l_{k-1}^{\xi}\})+(n-i)l_{k}^{\xi})&\subset&(\bigcup_{\alpha<\xi}I_{\alpha})^{c}.
\end{eqnarray*}
and the latest is true if for all $i$, $ 0\leq i\leq j$:
$$
(n-i)l_{k}^{\xi}\notin\bigcup_{\alpha<\xi}M_{\alpha}-\bigoplus^{i}(L_{\xi}\cup\{l_{1}^{\xi}, \ldots, l_{k-1}^{\xi}\}).
$$
It means that we are also able to choose each $l_{k}^{\xi}$ if only we may choose points of subindexes lower than $k$. And since our only condition for $l_{1}^{\xi}$ is that for all $i$, $0\leq i\leq n-1$:
$$
(n-i)l_{1}^{\xi}\notin\bigcup_{\alpha<\xi}I_{\alpha}-L_{\xi}
$$
we may choose the whole sequence.
\\
This finishes the construction and $L=\bigcup_{\alpha<\continuum}\{l_{1}^{\alpha}, l_{2}^{\alpha}, \ldots, l_{n+1}^{\xi}\}$ is the set.

\end{proof}

\begin{myth}[CH]
\label{luzinliniowap}
There exists an $\mc{I}$-Luzin set $L$ such that $span(L)$ is an $\mc{I}$-Luzin set.
\end{myth}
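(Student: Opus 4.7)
The plan is to carry out a transfinite recursion of length $\cm$ that mimics the previous theorems in this section, but now blocks $l_\xi$ from every $\QQ$-linear combination that, together with the older chosen points, would land inside a previously listed $I_\gamma$. Fix the enumerations $\{I_\alpha:\alpha<\cm\}$ and $\{r_\alpha:\alpha<\cm\}$ from the beginning of the subsection. At stage $\xi<\cm$, set
$$
V_\xi=\mathrm{span}_{\QQ}(\{l_\alpha:\alpha<\xi\}),
$$
and choose
$$
l_\xi\in\RR\bez\Bigl(V_\xi\cup\bigcup_{\gamma\leq\xi}\bigcup_{v\in V_\xi}\bigcup_{q\in\QQ\bez\{0\}}\tfrac{1}{q}(I_\gamma-v)\Bigr).
$$

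Under CH every $\xi<\cm$ is countable, so $V_\xi$ is countable, and therefore the parenthesised forbidden set is a countable union of translates and rescalings of Borel sets in $\mc{I}$. By translation invariance, scale invariance and the fact that $\mc{I}$ is a $\sigma$-ideal, this forbidden set belongs to $\mc{I}$; since $\RR\notin\mc{I}$, the choice is always feasible. Put $L=\{l_\xi:\xi<\cm\}$. Because $l_\xi\notin V_\xi$, the family $\{l_\xi:\xi<\cm\}$ is $\QQ$-linearly independent, hence $|L|=\cm$ and every element of $\mathrm{span}(L)$ has a unique representation as a finite nonzero $\QQ$-combination of the $l_\xi$'s.

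To verify that $\mathrm{span}(L)$ is $\mc{I}$-Luzin, it suffices, by the Borel base assumption on $\mc{I}$, to check countability of $\mathrm{span}(L)\cap I_\gamma$ for each $\gamma<\cm$. Take $x\in\mathrm{span}(L)\cap I_\gamma$ and write $x=q_1 l_{\xi_1}+\dots+q_n l_{\xi_n}$ with $\xi_1<\dots<\xi_n$ and $q_i\in\QQ\bez\{0\}$. If $\xi_n\geq\gamma$, then with $v=\sum_{i<n}q_i l_{\xi_i}\in V_{\xi_n}$ the defining condition for $l_{\xi_n}$ rules out $l_{\xi_n}\in\tfrac{1}{q_n}(I_\gamma-v)$, i.e. rules out $x\in I_\gamma$, a contradiction. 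Hence $\xi_n<\gamma$, so $x\in V_\gamma$, and $\mathrm{span}(L)\cap I_\gamma\subseteq V_\gamma$ is countable. Since $L\subseteq\mathrm{span}(L)$ this gives that $L$ itself is an $\mc{I}$-Luzin set.

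The only substantive point is the ``highest-index'' bookkeeping in the preceding paragraph, and it is this observation that dictates the shape of the forbidden set in the recursion: one must include, at each stage, one translate-scale $\tfrac{1}{q}(I_\gamma-v)$ for every $v\in V_\xi$, every $q\in\QQ\bez\{0\}$ and every $\gamma\leq\xi$. Once this is in place the construction succeeds because CH keeps all three index sets countable, so the forbidden set collapses into $\mc{I}$; everything else is routine.
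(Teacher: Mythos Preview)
Your proof is correct and follows essentially the same transfinite recursion as the paper: at stage $\xi$ you exclude $l_\xi$ from $span(\{l_\alpha:\alpha<\xi\})$ and from $\frac{1}{q}(I_\gamma-v)$ for all earlier $I_\gamma$, $v\in V_\xi$, $q\in\QQ\setminus\{0\}$, which is exactly the paper's conditions (1) and (2) unpacked. Your verification via the ``highest-index'' argument is in fact more explicit than the paper's, which simply asserts that $L$ is the desired set without spelling out why $span(L)\cap I_\gamma$ is countable.
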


\begin{proof}
We shall choose a sequnce $\langle l_{\alpha}: \alpha<\continuum\rangle$ such that for each $\xi<\continuum$:
\begin{enumerate}
\item $\{l_{\xi}\cup\{l_{\alpha}: \alpha<\xi\}$ is linearly independent,
\item $(\forall q\in\QQ, q\neq 0)(\forall l\in span(\{l_{\alpha}: \alpha<\xi\})(ql_{\xi}+l\notin\bigcup_{\alpha<\xi}I_{\alpha})$
\end{enumerate}
(1) implies that $l_{\xi}\notin span(\{l_{\alpha}: \alpha<\xi\})$, (2) follows that $l_{\xi}\notin\QQ\cdot\bigcup_{\alpha<\xi}I_{\alpha}+span(\{l_{\alpha}: \alpha<\xi\})$. Both sets are from $\mc{I}$, so such choice can be made.
\\
This finishes the construction and $L=\{l_{\alpha}: \alpha<\continuum\}$ is the set.

\end{proof}

\begin{mycoro}[CH] 
\begin{enumerate}
\item There exists an $\mc{I}$-Luzin set $L$ such that $\bigoplus^{n+1}L$ is an $\mc{I}$-Luzin for each $n\in\NN$,
\item There exists an $\mc{I}$-Luzin set $L$ such that $L+L=L$,
\item There exists an $\mc{I}$-Luzin set $L$ such that $\langle\bigoplus^{n+1}L: n\in\NN\rangle$ is a ascending sequence of $\mc{I}$-Luzin sets.
\end{enumerate}
\end{mycoro}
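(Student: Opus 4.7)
The plan is to obtain all three clauses from a single application of Theorem \ref{luzinliniowap}. I will fix an $\mc{I}$-Luzin set $L_0$ whose $\QQ$-span is again $\mc{I}$-Luzin, and put $L:=span(L_0)$; then $L$ is itself an $\mc{I}$-Luzin set which is, in addition, a $\QQ$-linear subspace of $\RR$, so in particular $0\in L$ and $L+L\subseteq L$.

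From $L+L\subseteq L$ together with the trivial inclusion $L=L+\{0\}\subseteq L+L$ it follows that $L+L=L$, which is clause (2). An easy induction on $n$ using the same two inclusions then gives $\bigoplus^{n+1}L=L$ for every $n\in\NN$. Since each term of $\langle\bigoplus^{n+1}L:n\in\NN\rangle$ equals the $\mc{I}$-Luzin set $L$, this proves clause (1) and realizes clause (3) as the (constant, hence non-strictly) ascending chain $L\subseteq L\subseteq\cdots$ of $\mc{I}$-Luzin sets.

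No step presents a real obstacle once Theorem \ref{luzinliniowap} is granted; the only interpretive point worth flagging is that ``ascending'' in clause (3) must be read non-strictly, since $L+L=L$ forces the sequence to be constant. If one insisted on a strictly ascending chain, clauses (2) and (3) would be incompatible for a single $L$, so the stated formulation is the only reasonable one.
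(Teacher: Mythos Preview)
Your argument is correct: taking $L=\mathrm{span}(L_0)$ with $L_0$ as in Theorem~\ref{luzinliniowap} yields a $\QQ$-linear subspace which is $\mc{I}$-Luzin, and then $\bigoplus^{n+1}L=L$ for every $n$, which settles all three clauses at once (with ``ascending'' read as $\subseteq$).

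The paper proceeds a bit differently: it keeps the linearly independent set $L$ from Theorem~\ref{luzinliniowap} and uses three distinct witnesses --- $L$ itself for (1), $\mathrm{span}(L)$ for (2), and $L\cup\{0\}$ for (3). The point of the last choice is that, because $L$ is $\QQ$-linearly independent, the chain $\langle\bigoplus^{n+1}(L\cup\{0\}):n\in\NN\rangle$ is \emph{strictly} ascending (e.g.\ a sum of $n+1$ distinct elements of $L$ cannot be rewritten as a sum of at most $n$ of them), while each term sits inside $\mathrm{span}(L)$ and hence is $\mc{I}$-Luzin. Your single-witness route is tidier, but it collapses (3) to a constant sequence and thereby makes (3) an immediate by-product of (2); the paper's choice of $L\cup\{0\}$ shows that a genuinely increasing chain is available, which is presumably the intended content of (3). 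Your remark that (2) and (3) are incompatible for a \emph{single} $L$ under a strict reading is well taken, but note that the corollary does not demand the same $L$ in all clauses.
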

\begin{proof}
Let $L$ be a $\mc{I}$-Luzin set as in Theorem \ref{luzinliniowap}.
\begin{enumerate}
\item $L$ satisfies that.
\item $span(L)$ has such a property.
\item It suffices to take $L\cup\{0\}$.
\end{enumerate}
\end{proof}

\begin{myth}[CH]
There exists a Luzin set $L$ such that $L+L$ is a Bernstein set.
\end{myth}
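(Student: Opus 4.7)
Under CH, enumerate the perfect subsets of $\RR$ as $\{P_{\alpha}:\alpha<\omega_{1}\}$ and the Borel meager subsets of $\RR$ as $\{M_{\alpha}:\alpha<\omega_{1}\}$. I would build $L$ by a transfinite recursion of length $\omega_{1}$, choosing at every stage $\alpha$ four points $a_{\alpha},b_{\alpha},c_{\alpha},p_{\alpha}$ with $c_{\alpha},p_{\alpha}\in P_{\alpha}$ distinct and $a_{\alpha}+b_{\alpha}=c_{\alpha}$, and finally put $L=\{a_{\alpha},b_{\alpha}:\alpha<\omega_{1}\}$. The point $c_{\alpha}$ witnesses $(L+L)\cap P_{\alpha}\neq\emptyset$, while $p_{\alpha}$ is a ``guard point'' in $P_{\alpha}$ that must never enter $L+L$.

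\textbf{The recursive step.} At stage $\alpha$ write $L_{\alpha}=\{a_{\beta},b_{\beta}:\beta<\alpha\}$ (countable under CH) and $G_{\alpha}=\bigcup_{\beta\leq\alpha}M_{\beta}$ (meager). First pick $c_{\alpha}\in P_{\alpha}\bez\{p_{\beta}:\beta<\alpha\}$, then $p_{\alpha}\in P_{\alpha}\bez(\{c_{\alpha}\}\cup(L_{\alpha}+L_{\alpha}))$; both are possible because $P_{\alpha}$ is uncountable and the excluded sets are countable. Next choose $a_{\alpha}$ outside
$$
E_{\alpha}=G_{\alpha}\cup(c_{\alpha}-G_{\alpha})\cup L_{\alpha}\cup(c_{\alpha}-L_{\alpha})\cup\{c_{\alpha}/2\}\cup F_{\alpha},
$$
where
$$
F_{\alpha}=\bigl\{p_{\gamma}-x,\ c_{\alpha}+x-p_{\gamma}:\gamma\leq\alpha,\ x\in L_{\alpha}\bigr\}\cup\bigl\{\tfrac{1}{2}p_{\gamma},\ c_{\alpha}-\tfrac{1}{2}p_{\gamma}:\gamma\leq\alpha\bigr\},
$$
and set $b_{\alpha}=c_{\alpha}-a_{\alpha}$. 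Since $E_{\alpha}$ is the union of a meager set and a countable set, its complement is nonempty; by construction $a_{\alpha},b_{\alpha}$ are new and distinct, so at the end $|L|=\aleph_{1}=\continuum$.

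\textbf{Verification.} For any Borel meager $M=M_{\beta}$ and any $\alpha>\beta$ we forced $a_{\alpha},b_{\alpha}\notin G_{\alpha}\supseteq M_{\beta}$, hence $L\cap M_{\beta}\subseteq\{a_{\gamma},b_{\gamma}:\gamma\leq\beta\}$ is countable, so $L$ is Luzin. The inclusion $c_{\alpha}=a_{\alpha}+b_{\alpha}\in(L+L)\cap P_{\alpha}$ gives the ``hit'' half of the Bernstein property. For the ``miss'' half suppose $p_{\alpha}=u+v$ with $u,v\in L$ and let $\delta$ be the larger of their stages. If $\delta<\alpha$ then $u+v\in L_{\alpha}+L_{\alpha}$, contradicting the choice of $p_{\alpha}$; if $\delta\geq\alpha$ the sum has one of the forms $a_{\delta}+x,\ b_{\delta}+x,\ 2a_{\delta},\ 2b_{\delta}$ with $x\in L_{\delta}$, or $a_{\delta}+b_{\delta}=c_{\delta}$, and each is excluded either by the $\gamma=\alpha$ entries of $F_{\delta}$ or by $c_{\delta}\neq p_{\alpha}$ (guaranteed in the choice of $c_{\delta}$). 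Hence $p_{\alpha}\in P_{\alpha}\bez(L+L)$.

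\textbf{Main obstacle.} The delicate ingredient is the bookkeeping in $F_{\alpha}$: one must identify in advance every way a future sum from $L$ could equal $p_{\alpha}$ and bar those finitely-many shapes of values, while keeping the forbidden set countable. Since every element of $L+L$ either comes from two earlier-stage points (handled once and for all by $p_{\alpha}\notin L_{\alpha}+L_{\alpha}$) or involves at least one of $a_{\delta},b_{\delta}$ for some $\delta\geq\alpha$, the five-case list above is complete and each case contributes at most countably many forbidden values at each later stage. The symmetric construction $b_{\alpha}=c_{\alpha}-a_{\alpha}$, together with the inclusion of both $G_{\alpha}$ and $c_{\alpha}-G_{\alpha}$ (and the paired entries of $F_{\alpha}$) in $E_{\alpha}$, ensures that $b_{\alpha}$ automatically inherits every constraint imposed on $a_{\alpha}$, which is the only non-obvious technical point.
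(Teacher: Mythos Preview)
Your proof is correct and genuinely more elementary than the paper's. The paper also builds $L$ as $\{l_\alpha,l'_\alpha:\alpha<\omega_1\}$ with $l_\alpha+l'_\alpha\in P_\alpha$ and with guard points $p_\alpha\in P_\alpha$ kept out of $L+L$, but it does \emph{not} commit in advance to a specific target $c_\alpha\in P_\alpha$. Instead it first tries to pick $l'_\alpha$ outside a meager set and then needs $l_\alpha\in (P_\alpha-l'_\alpha)\cap M_1^c\setminus C$; since $P_\alpha-l'_\alpha$ is merely a perfect (possibly nowhere dense) set, it is not automatic that it meets the comeager set $M_1^c$, and the paper resolves this by passing to a generic extension with $cov(\mc{M})\ge\omega_2$ and invoking Shoenfield absoluteness to pull the witness back. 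Your idea of choosing $c_\alpha\in P_\alpha$ \emph{first} (avoiding only countably many forbidden points) decouples the two constraints: once $c_\alpha$ is fixed, the requirements on $a_\alpha$ become ``avoid the meager set $G_\alpha\cup(c_\alpha-G_\alpha)$ together with a countable set,'' which is trivially satisfiable in $\RR$. This sidesteps the absoluteness argument entirely. One cosmetic point: when $\delta=\alpha$ the inequality $c_\delta\neq p_\alpha$ is secured by your choice of $p_\alpha$ (not of $c_\delta$), but the claim itself is correct.
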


\begin{proof}
Let $\{P_{\alpha}: \alpha<\continuum\}$ be an enumeration of perfect sets. We choose sequences $\{l_{\alpha}: \alpha<\continuum\}$, $\{l'_{\alpha}: \alpha<\continuum\}$ and $\{p_{\alpha}: \alpha<\continuum\}$ such that for each $\xi<\continuum$:
\begin{enumerate}
\item $l_{\xi}, l'_{\xi}\notin\bigcup_{\alpha<\xi}M_{\alpha}$,
\item $(\bigcup_{\alpha\leq\xi}\{l_{\alpha}, l'_{\alpha}\}+\bigcup_{\alpha\leq\xi}\{l_{\alpha}, l'_{\alpha}\})\cap\{p_{\alpha}: \alpha<\xi\}=\emptyset$,
\item $l_{\xi}+l'_{\xi}\in P_{\xi}$,
\item $p_{\xi}\in P_{\xi}$.
\end{enumerate}
Suppose that we are at the step $\xi<\continuum$. Let us observe that for fixed:
$$
l'_{\xi}\notin\bigcup_{\alpha<\xi}M_{\alpha}\cup(\{p_{\alpha}: \alpha<\xi\}-(\{l_{\alpha}:\alpha<\xi\}\cup\{l'_{\alpha}:\alpha<\xi\}))\cup\frac{1}{2}\{p_{\alpha}:\alpha<\xi\}
$$
it is sufficient to choose:
$$
l_{\xi} \in (\bigcup_{\alpha<\xi}M_{\alpha})^{c}\cap(P_{\xi}-l'_{\xi})\bez
((\{p_{\alpha}: \alpha<\xi\}-(\{l_{\alpha}: \alpha<\xi\}\cup\{l'_{\alpha}: \alpha\leq\xi\}))\cup\frac{1}{2}\{p_{\alpha}:\alpha<\xi\}).
$$
Let us denote:
\begin{eqnarray*}
M_{1}& = &\bigcup_{\alpha<\xi}M_{\alpha},
\\
M_{2}& = &\bigcup_{\alpha<\xi}M_{\alpha}\cup(\{p_{\alpha}: \alpha<\xi\}-(\{l_{\alpha}:\alpha<\xi\}\cup\{l'_{\alpha}:\alpha<\xi\}))\cup\frac{1}{2}\{p_{\alpha}:\alpha<\xi\},
\\
P& = &P_{\xi},
\\
C& = & ((\{p_{\alpha}: \alpha<\xi\}-(\{l_{\alpha}: \alpha<\xi\}\cup\{l'_{\alpha}: \alpha\leq\xi\}))\cup\frac{1}{2}\{p_{\alpha}:\alpha<\xi\}).
\end{eqnarray*}
It is clear now that we may reduce our task to the following: does there exist  $l'\in M_{2}^{c}$ such that a set $M_{1}^{c}\cap(P-l')$ has cardinality $\continuum$?

To prove it let us call our universe $V$. We can extend it via generic extension to $V'$ such that $V'\models cov(\mc{M})\ge\omega_2.$ We will work in $V'$. $M_1$ and $M_2$ are now Borel meager sets decoded in $V'$ and $P$ is a perfect set decoded in $V'$. (Their codes are from $V$.) Let us now fix a set $A\subseteq P$ of cardinality $\omega_1.$ Notice that
for every $a\in A$ a set $\{l:\; a-l\in M_1^c\}=-M_1^c+a$ is comeager. Since $cov(\mc{M})>\omega_1$ we get that 
a set 
$$\bigcap_{a\in A}\{l:\; a-l\in M_1^c\}\cap M_2^c\neq\emptyset.$$
It shows that $V'\models \exists l'\in M_2^c |M_1^c\cap (P-l')|\ge\omega_1.$ A set $M_1^c\cap (P-l')$ is uncountable Borel set, so it contains a perfect set. It gives us an  observation that $V'$ models the following sentence:
$$
(\exists l'\in\RR)(\exists T\in Perf)(\forall x\in \RR )(l'\in M_2^c\land (x\in T\rightarrow x\in M_1^c\;\land\; x+l'\in P))
$$
The latter sentence is a $\Sigma^1_2$-sentence, so by Shoenfield absoluteness theorem it is also true in $V,$ what completes the proof.
  
\end{proof}

\begin{myth}[CH]
There exists a Luzin set $S$ such that $S+S$ is a Bernstein set.
\end{myth}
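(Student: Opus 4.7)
The stated theorem is formulated word-for-word like the preceding one — only the letter for the set has changed from $L$ to $S$ — so my plan is to run the same construction with only cosmetic changes. Under CH, I fix enumerations $\{P_{\alpha}: \alpha<\continuum\}$ of perfect sets and $\{M_{\alpha}: \alpha<\continuum\}$ of Borel meager sets, and recursively build three sequences $\langle s_{\xi}: \xi<\continuum\rangle$, $\langle s'_{\xi}: \xi<\continuum\rangle$, $\langle p_{\xi}: \xi<\continuum\rangle$ satisfying, at stage $\xi$: (i) $s_{\xi}, s'_{\xi}\notin\bigcup_{\alpha<\xi}M_{\alpha}$; (ii) $\bigl(\bigcup_{\alpha\le\xi}\{s_{\alpha},s'_{\alpha}\}+\bigcup_{\alpha\le\xi}\{s_{\alpha},s'_{\alpha}\}\bigr)\cap\{p_{\alpha}: \alpha<\xi\}=\emptyset$; (iii) $s_{\xi}+s'_{\xi}\in P_{\xi}$ and $p_{\xi}\in P_{\xi}$. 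Then $S=\{s_{\xi}, s'_{\xi}: \xi<\continuum\}$ is a Luzin set by (i); (iii) guarantees that $S+S$ hits every perfect set $P_{\xi}$ at $s_{\xi}+s'_{\xi}$; and (ii) ensures that $\{p_{\alpha}: \alpha<\continuum\}$ — a continuum-sized selector that itself meets every perfect set — is disjoint from $S+S$. Together these give that $S+S$ is Bernstein.

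For the successor step at $\xi$ I would first pick $s'_{\xi}$ outside the meager pile $\bigcup_{\alpha<\xi}M_{\alpha}$, outside the countable shifts $\{p_{\alpha}-s_{\beta},\, p_{\alpha}-s'_{\beta}: \alpha,\beta<\xi\}$, and outside $\tfrac{1}{2}\{p_{\alpha}: \alpha<\xi\}$, all of which are countable discards from a comeager set. Next I need $s_{\xi}\in(P_{\xi}-s'_{\xi})\cap M_{1}^{c}$ avoiding a further countable set imposed by (ii), where $M_{1}=\bigcup_{\alpha<\xi}M_{\alpha}$. This reduces to producing an admissible $s'_{\xi}$ for which $M_{1}^{c}\cap(P_{\xi}-s'_{\xi})$ has cardinality $\continuum$.

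The main obstacle — and the reason CH by itself is not obviously enough — is exactly this cardinality requirement; I expect to dispatch it with the Shoenfield-absoluteness trick from the preceding theorem. Pass to a generic extension $V'$ in which $cov(\mc{M})\ge\omega_{2}$, fix any $A\subseteq P_{\xi}$ of size $\omega_{1}$, and note that each set $a-M_{1}^{c}$ is comeager, so $\bigcap_{a\in A}(a-M_{1}^{c})\cap M_{2}^{c}\neq\emptyset$ in $V'$, where $M_{2}$ denotes the meager pile of forbidden positions for $s'_{\xi}$. Any $s'_{\xi}$ in this intersection makes $M_{1}^{c}\cap(P_{\xi}-s'_{\xi})$ an uncountable Borel set, hence it contains a perfect subset $T$. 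The existence of $s'_{\xi}\in M_{2}^{c}$ together with a perfect $T\subseteq M_{1}^{c}\cap(P_{\xi}-s'_{\xi})$ is a $\Sigma^{1}_{2}$-statement in Borel codes from $V$, and so by Shoenfield absoluteness it reflects back to $V$. Once this reflection is in hand, the remaining bookkeeping required to meet (i)--(iii) at stage $\xi$ is routine, and the induction carries through to produce the desired Luzin set $S$.
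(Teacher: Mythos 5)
You have taken the statement at face value, but the letter $S$ and the paper's own proof make clear that ``Luzin'' here is a misprint for ``Sierpi\'nski'': this theorem is meant to be the measure-theoretic dual of the preceding one, and the paper's argument accordingly works with Borel null sets, conull complements and the hypothesis $cov(\mc{N})\ge\omega_2$ in the generic extension. Your proposal simply re-runs the category construction of the previous theorem with the letters renamed, so what you prove is a verbatim duplicate of the result already established for Luzin sets; it does not prove the intended assertion that there is a Sierpi\'nski set $S$ with $S+S$ Bernstein. As a proof of the literal (misprinted) statement your argument is correct --- it is exactly the paper's previous proof --- but measured against the paper's own proof of this theorem it misses the point of the dualization.

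The adaptation you would need is routine and follows your own template. Enumerate perfect sets $\{P_\alpha:\alpha<\continuum\}$ and Borel null sets $\{N_\alpha:\alpha<\continuum\}$, and require $s_\xi,s'_\xi\notin\bigcup_{\alpha<\xi}N_\alpha$ together with your conditions (ii) and (iii); avoiding countably many null sets at each stage keeps $S$ Sierpi\'nski. The only nontrivial step is again the cardinality claim: one must find $s'$ in the conull set $N_2^c$ (the complement of the null pile of forbidden positions, which under CH is a countable union of null sets plus countable sets, hence null) such that $N_1^c\cap(P_\xi-s')$ has cardinality $\continuum$, where $N_1=\bigcup_{\alpha<\xi}N_\alpha$. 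For this, pass to a generic extension satisfying $cov(\mc{N})\ge\omega_2$; for each $a$ in a fixed set $A\subseteq P_\xi$ of size $\omega_1$ the translate $a-N_1^c$ is conull, so $cov(\mc{N})>\omega_1$ gives a point $s'\in\bigcap_{a\in A}(a-N_1^c)\cap N_2^c$, which makes $N_1^c\cap(P_\xi-s')$ an uncountable Borel set and hence one containing a perfect subset; the existence of such an $s'$ together with a perfect witness is a $\Sigma^1_2$ statement in codes from $V$ and reflects back by Shoenfield, exactly as in your category argument.
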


\begin{proof}
It goes almost identically to the previous one. In crucial moment we have to find out whether there exists $s'\in N_{2}^{c}$, $N_{2}^{c}$- conull set, such that $N_{1}^{c}\cap(P-s')$ has a cardinality $\continuum$, where $N_{1}^{c}$ is conull and $P$ is perfect.

It can be proven using assumption that $cov(\mc{N})\ge\omega_2$ and Shoenfield absoluteness theorem.

\end{proof}

Simillar reasoning may be found in \cite{KRSZ}.




\begin{myfact}
\label{invariant+}
Let $A$ and $B$ be sets and $X=\bigcup_{n\in\NN}(A+\bigoplus^{n}B)$. Then $X+B\subset X$.
\end{myfact}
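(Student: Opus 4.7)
The plan is to unfold both sides from the definitions and observe that adding one more copy of $B$ to a summand of the form $A+\bigoplus^{n}B$ simply increments the index $n$, landing inside the union that defines $X$.

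Concretely, I would start with an arbitrary $x \in X+B$ and write $x = y+b$ with $y \in X$ and $b \in B$. By the definition of $X$ as the union $\bigcup_{n\in\NN}(A+\bigoplus^{n}B)$, there exists some $n \in \NN$ with $y \in A+\bigoplus^{n}B$, so I can decompose $y = a + z$, where $a \in A$ and $z \in \bigoplus^{n}B$ (that is, $z$ is a sum of $n$ elements of $B$).

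Then I would compute $x = y+b = a + (z+b)$. Since $z$ is a sum of $n$ elements of $B$ and $b \in B$, the element $z+b$ is a sum of $n+1$ elements of $B$, so $z+b \in \bigoplus^{n+1}B$. Consequently $x \in A + \bigoplus^{n+1}B \subset \bigcup_{m\in\NN}(A+\bigoplus^{m}B) = X$, which gives the desired inclusion $X+B \subset X$.

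There is no real obstacle here — the proposition is essentially an unwinding of the definition of Minkowski sum together with the absorption property $\bigoplus^{n}B + B = \bigoplus^{n+1}B$. The only very small point worth stating explicitly (should one want to be pedantic) is that $n+1 \in \NN$ whenever $n \in \NN$, so the index $m = n+1$ is indeed a legitimate index in the union defining $X$.
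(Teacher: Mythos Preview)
Your argument is correct; the paper itself states this proposition without proof, treating it as immediate from the definitions, and your unwinding is exactly the natural verification.
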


\begin{mylemma}
\label{invariantcomeager}
There exists a comeager null set $R$ and perfect nowhere dense null set $P$ such that $R+P\subset R$.
\end{mylemma}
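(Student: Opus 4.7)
The plan is to apply Proposition \ref{invariant+} with a carefully chosen comeager null set $A$ and perfect nowhere dense null set $B = P$, and then set $R = \bigcup_{n \in \NN}(A + \bigoplus^n P)$. By the proposition, $R + P \subseteq R$ holds automatically, and if we arrange that $0 \in P$ then $R \supseteq A$ is comeager for free. The entire difficulty is to keep $R$ Lebesgue null.

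First, I would choose $P$ to be a perfect nowhere dense compact set of upper box-counting dimension zero. A concrete example is a symmetric Cantor-type set of the form $P = \{\sum_{n \geq 1} a_n \cdot 3^{-n!} : a_n \in \{0,1\}\}$, translated so that $0 \in P$; it is covered at level $n$ by $2^n$ intervals of length on the order of $3^{-(n+1)!}$, yielding $\overline{\dim}_B(P) = 0$. Since box dimension is subadditive under Minkowski sums, $\overline{\dim}_B(\bigoplus^n P) \leq n \cdot \overline{\dim}_B(P) = 0$ for every $n$, so each iterated sumset $\bigoplus^n P$ has box dimension zero and in particular is Lebesgue null and nowhere dense.

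Next I would take $A$ to be a comeager Lebesgue-null set of Hausdorff dimension zero; the canonical choice is the set of Liouville numbers, a dense $G_\delta$ with $\dim_H(A) = 0$ and $\lambda(A) = 0$. Invoking the standard sumset inequality from geometric measure theory, $\dim_H(A + Q) \leq \dim_H(A) + \overline{\dim}_B(Q)$ for every compact $Q$, and applying it with $Q = \bigoplus^n P$, one obtains $\dim_H(A + \bigoplus^n P) \leq 0 + 0 = 0$ for every $n$. Hence each $A + \bigoplus^n P$ has Lebesgue measure zero, and $R$ is a countable union of null sets, so it is null. Combined with the comeagerness of $R \supseteq A$ and the Proposition's invariance $R + P \subseteq R$, this gives the claim.

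The main obstacle is precisely this measure-zero verification for $R$. Sumsets of null sets can fail badly to be null (the classical example being the middle-thirds Cantor set $C$, for which $C + C = [0,2]$), so one cannot merely take an arbitrary perfect null $P$. The box-dimension-zero hypothesis on $P$ together with the Hausdorff-dimension-zero hypothesis on $A$ is what keeps every iterated sumset $A + \bigoplus^n P$ null, and this quantitative ``thinness'' is really the heart of the construction.
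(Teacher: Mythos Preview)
Your argument is correct, and the overall skeleton matches the paper's: both of you invoke Proposition~\ref{invariant+} with a thin comeager null $A$ and a thin perfect $P$, set $R=\bigcup_n(A+\bigoplus^n P)$, and the only real work is checking that every iterated sumset $A+\bigoplus^n P$ remains null.

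Where you diverge is in how that nullity is verified. The paper builds both pieces by hand: it writes $A=\bigcap_n\bigcup_k B(q_k,r_{n,k})$ and a Cantor set $C=\bigcap_n C_n$ with explicitly chosen lengths $d_n=2^{-(2^n n+2n+1)}$ and radii $r_{n,k}=2^{n-2}d_{n+k}$, then does a bare-hands covering estimate for $\lambda\bigl(\bigcup_k(B(q_k,r_{n,k})+\bigoplus^{2^j}C)\bigr)$ and shows it tends to $0$ along a subsequence of $n$'s. Your route replaces this computation with dimension theory: pick $P$ of upper box dimension $0$ (so $\overline{\dim}_B(\bigoplus^n P)=0$ by subadditivity), pick $A$ to be the Liouville numbers (a classical dense $G_\delta$ with $\dim_H(A)=0$), and appeal to $\dim_H(A+Q)\le\dim_H(A)+\overline{\dim}_B(Q)$ to conclude every sumset has Hausdorff dimension $0$, hence is null. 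The inequality you quote follows from the standard product bound $\dim_H(A\times Q)\le\dim_H(A)+\overline{\dim}_B(Q)$ together with the fact that addition is Lipschitz, so your appeal to it is legitimate.

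Your approach is shorter and more conceptual but imports outside facts (the Hausdorff dimension of the Liouville numbers and the product/sum dimension inequality); the paper's proof is entirely self-contained at the cost of an explicit and somewhat delicate numerical estimate. Two cosmetic remarks: your $P$ already contains $0$ without translation (take all $a_n=0$), and you should note that $R$ is not asserted to be Borel in the lemma, so the fact that $A+\bigoplus^n P$ is merely analytic is harmless.
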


\begin{proof}
Let $\langle q_{n}: n\in\NN\rangle$ be an enumeration of $\QQ$. We shall construct a set $A=\bigcap_{n\in\NN}G_{n}$, where $G_{n}=\bigcup_{k\in\NN}B(q_{k}, r_{n,k})$, and a Cantor type set $C=\bigcap_{n\in\NN}C_{n}$, where $C_{0}=[0, 1]$ and for $n>0$ a set $C_{n}$ is made through erasing an open interval from the middle of each closed interval that $C_{n-1}$ is consisted of, so every $C_{n}$ is a sum of $2^{n}$ closed intervals and $\langle C_{n}: n\in\NN\rangle$ is a sequence of compact nested sets.
\\
Let us denote the length of a closed interval in $C_{n}$ by $d_{n}$. We want to set the values of $r_{n,k}$ and $d_{n}$ so that for each $j\in\NN$ set $A+\bigoplus^{j}C$ is null. Since $0\in C$ it is sufficient to only take care of $A+\bigoplus^{2^{j}}C$ for every $j\in\NN$. Let us observe that:
$$
A+\bigoplus^{2^{j}}C\subset\bigcap_{n\in\NN}\bigcup_{k\in\NN}(B(q_{k},r_{n,k})+\bigoplus^{2^{j}}C).
$$
Let us fix some $i\in\NN$. A set $C_{i}$ is a sum of $m=2^{i}$ closed intervals of length $d_{i}$ each. Since for every interval $I$ set $I+I$ is an interval with double length, $C_{i}+C_{i}$ is a sum of $\frac{(m+1)m}{2}$ closed intervals, each of length $2d_{i}$. It follows that we can give upper bound for the number of intervals that $\bigoplus^{2^{j}}C_{i}$ is consisted of and it is $m^{2^{j}}=2^{2^{j}i}$, each of length $2^{j}d_{i}$. Let us note also that if we take an interval $(a, b)$ of length $\geq 2^{j-1}d_{i}$ then:
$$
\lambda((a, b)+\bigoplus^{2^{j}}C)\leq\lambda((a, b)+\bigoplus^{2^{j}}C_{i}).
$$
Now let us set for each $n, k\in\NN$:
\begin{eqnarray*}
d_{n}&=&\frac{1}{2^{2^{n}n+2n+1}},
\\
r_{n,k}&=&2^{n-2}d_{n+k}.
\end{eqnarray*}
Then for fixed $j\in\NN$ we have:
\begin{eqnarray*}
\lambda(A+\bigoplus^{2^{j}}C) & \leq & \lambda(\bigcap_{n\in\NN}\bigcup_{k\in\NN}(B(q_{k},r_{n,k})+\bigoplus^{2^{j}}C))
\\
 & = & \lambda(\bigcap_{n\geq j}\bigcup_{k\in\NN}(B(q_{k},r_{n,k})+\bigoplus^{2^{j}}C)),
\end{eqnarray*}
so for fixed $n=j+i$ we have:
\begin{eqnarray*}
\lambda(\bigcup_{k\in\NN}(B(q_{k},r_{j+i,k})+\bigoplus^{2^{j}}C)) & \leq & \lambda(\bigcup_{k\in\NN}(B(q_{k},r_{j+i,k})+\bigoplus^{2^{j+i}}C_{j+i+k}))
\\
 & \leq & \sum_{k=0}^{\infty}(4r_{j+i,k}2^{2^{j+i}(j+i+k)}+2^{2^{j+i}(j+i+k)}2^{j+i}d_{j+i+k})
\\
& = & \sum_{k=0}^{\infty}2^{2^{j+i}(j+i+k)}2^{j+i+1}d_{j+i+k}
\\
& = & \sum_{k=0}^{\infty}\frac{2^{2^{j+i}(j+i+k)}2^{j+i+1}}{2^{2^{j+i+k}(j+i+k)}2^{2j+2i+2k+1}}
\\
& \leq & \frac{1}{2^{j+i}}\sum_{k=0}^{\infty}\frac{1}{2^{k}},
\end{eqnarray*}
what tends to $0$ if $i\rightarrow\infty$. This proofs that for each $n\in\NN$ a set $A+\bigoplus^{n}C$ is null and since $A$ is comeager we may put by Lemma \ref{invariant+} $R=\bigcup_{n\in\NN}(A+\bigoplus^{n}C)$ and $P=C$ which are the desired sets.
\end{proof}

A little stronger version of the previous lemma can be proven. However, the proof uses absoluteness theorem.
\begin{mylemma}
 Let $A$ be a null set. We can find a perfect set $P$ such that for every $n$
 $$ A+\underbrace{P+P+\cdots +P}_n\in \mc{N}.
 $$
\end{mylemma}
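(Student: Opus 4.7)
The plan is to apply Shoenfield absoluteness. First, we may assume $A$ is a Borel null set: any null set is contained in a Borel null set, and a perfect $P$ witnessing the conclusion for a null superset witnesses it for $A$ as well. Fix a Borel code $c$ for $A$. The target sentence
$$
\phi(c)\equiv \exists P \text{ perfect } \forall n \in \omega \bigl(A + \underbrace{P+\cdots+P}_n \in \mathcal{N}\bigr)
$$
is $\Sigma^1_2$ in $c$: a perfect set is coded by a real $T$ (a perfect tree), and ``$A + n[T] \in \mathcal{N}$'' is witnessed by a real $e_n$ coding a Borel null set $N_n \supseteq B_c + n[T]$. The predicates ``$T$ codes a perfect tree'', ``$e_n$ codes a null Borel set'', and the set-theoretic containment are all $\Pi^1_1$ or simpler in the relevant reals, so the whole statement has $\Sigma^1_2$ complexity in $c$. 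By Shoenfield's absoluteness theorem it suffices to verify $\phi(c)$ in some generic extension $V'=V[G]$ of $V$.

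I would take $V'$ to be the extension of $V$ by $\omega_2$ Cohen reals, so $V'\models \mathrm{cov}(\mathcal{M})\ge \omega_2$. In $V'$ the strategy is to adapt the proof of the previous lemma so that its comeager null set $R$ actually contains the interpretation of $A$. Replace the enumeration of $\mathbb{Q}$ used to define $G_n=\bigcup_k B(q_k,r_{n,k})$ by a countable sequence $(y_k)$ of centers together with double-exponentially decaying radii $(r_{n,k})$, chosen so that (i) the measure estimates of the previous lemma still go through --- they depend only on $(r_{n,k})$ and the Cantor parameters $(d_m)$, not on the centers --- and (ii) $A\subseteq\bigcap_n \bigcup_k B(y_k,r_{n,k})$. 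Condition (ii) is a Vitali-type covering of the null set $A$ by balls around the sequence $(y_k)$; the wealth of new reals in $V'$ makes it straightforward to pick $(y_k)$ adapted to $A$. With these adjustments, the construction yields a perfect set $P$ and a null set $R\supseteq A$ with $R+nP\subseteq R$ for every $n$, so $A+nP\subseteq R\in\mathcal{N}$, verifying $\phi(c)$ in $V'$.

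The main obstacle is step (ii): ensuring that the fast-decaying radii $(r_{n,k})$ still suffice to cover $A$ when the centers are from a fixed countable subset of $\mathbb{R}$. For $\sigma$-compact null $A$ this is direct from compactness, but in general one must enumerate $(y_k)$ and $(r_{n,k})$ in tandem, exploiting $\lambda(A)=0$ to supply successful covers at each stage. If this direct approach runs into trouble, one can fall back on a softer argument: for any $A_1\subseteq A$ of cardinality $\le\omega_1$ the intersection $\bigcap_{a\in A_1}(R-a)$ is a nonempty comeager set (fewer than $\mathrm{cov}(\mathcal{M})$ many comeagers), so $A_1+nP_0\subseteq R-t\in\mathcal{N}$ for the $P_0$ from the previous lemma and a suitable $t$ --- and a Shoenfield-style reflection then descends the existence of such $P$ from $V'$ to $V$ via the $\Sigma^1_2$ form of $\phi(c)$.
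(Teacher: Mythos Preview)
Your high-level plan---reduce to Borel $A$, express the conclusion as a $\Sigma^1_2$ sentence in a code for $A$, and verify it in a generic extension---is exactly the paper's framework. The trouble is that neither of your two strategies inside $V'$ actually establishes $\phi(c)$ there.

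The fallback has a decisive gap. In $V'$ the decoded Borel set $A$ (if uncountable) has cardinality $\mathfrak{c}^{V'}$, and one always has $\mathrm{cov}(\mathcal{M})\le\mathfrak{c}$. So $\bigcap_{a\in A}(R-a)$ is an intersection of continuum-many comeager sets, and $\mathrm{cov}(\mathcal{M})\ge\omega_2$ does not make it nonempty. Your argument only produces, for each $A_1\subseteq A$ of size $\le\omega_1$, some $t$ with $A_1+nP_0$ null; there is no $\Sigma^1_2$ reflection that upgrades this to a single perfect $P$ working for all of $A$. The primary approach is also problematic: asking that $A\subseteq\bigcap_n\bigcup_k B(y_k,r_{n,k})$ with the rapidly decaying radii forced by the previous lemma is essentially a strong-measure-zero condition on $A$, which already fails for the middle-thirds Cantor set. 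You could try to adjust the Cantor parameters $(d_m)$ together with the radii, but then the measure estimate of Lemma~\ref{invariantcomeager} no longer transfers verbatim---reworking it for arbitrary null $A$ is the whole content of the lemma you are proving. Note too that Cohen reals are irrelevant to this line: whether such a cover of $A$ exists is absolute between $V$ and $V'$.

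The paper instead forces $\mathrm{add}(\mathcal{N})\ge\omega_3$. In that extension, for any null $A'$ and any $X$ of size $\omega_2$ the set $A'+X=\bigcup_{x\in X}(A'+x)$ is null, hence contained in some Borel null $B$; then $\{x:A'+x\subseteq B\}$ is coanalytic of size $\ge\omega_2$, so contains a perfect $P'$ with $A'+P'$ null. Iterating (replacing $A$ by $A+P_0+\cdots+P_{n-1}$ at stage $n$) and fusing the resulting perfect sets by preserving the first $n$ splitting levels at each step yields a single perfect $P$ with $A+nP$ null for every $n$; Shoenfield then reflects this back to $V$. The key missing ingredient in your attempt is this use of $\mathrm{add}(\mathcal{N})$ (rather than $\mathrm{cov}(\mathcal{M})$) together with the perfect-set property for large coanalytic sets.
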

\begin{proof}
 We can assume that $A$ is Borel. Let $V$ be our universe. We enlarge it (via forcing) to $V'$ satisfying $V'\models add(\mc{N})=\omega_3.$ 
 
 Let us work now in $V'.$ Take $X\subseteq \RR$ of cardinality $\omega_2.$ Then $A+X\in \mc{N},$ so we can find a null Borel set $B,$ such that $A+X\subseteq B.$ Notice that $\{x:\ x+A\subseteq B\}$ is a coanalytic set of cardinality $\omega_2,$ hence, it contains a perfect set $P_0.$
 
 Now, set $A_1=A_0+P_0.$ We want to find a perfect set $P_1\subseteq P_0$ such that $A_1+P_1\in \mc{N}.$ Moreover, we require that the first splitting node in $P_0$ is still a splitting node in $P_1$.
 We procced by a simple induction on n-th step finding for a null set $A_n$ and a perfect set $P_n$ a perfect set $P_{n+1}\subseteq P_n$ such that $A_{n+1}=A_n+P_{n+1}$ is null and all splitting nodes from first $n+1$ levels in $P_n$ remains splitting nodes in $P_{n+1}$.
 
 Let us notice that this construction gives us a sequence of perfect sets $(P_n, n\in\omega)$ such that $P=\bigcap_{n\in\omega}P_n$ is a perfect set. Moreover, we can find a null $G_\delta$ $B$ such that $B\supseteq\bigcup_{n\in\omega}A_n.$
Notice that
 $$
 V'\models (\exists P\in Perf)(\exists B)(\forall n)(\forall x)(\forall a)(\forall b)
 (B\text{ is null $G_\delta$} \land 
 $$
 $$
 \quad (a\in A\land b\notin B\land x_0,x_1,\ldots,x_n\in P\rightarrow a+x_0+\cdots x_n\neq b)),
 $$
 where $x_0, x_1,\ldots, x_n$ are naturally coded by $x$ e.g. by the formula $x_i(k)=x(kn+i).$
 
 Above formula is $\Sigma^1_2$, so by Shoenfield absoluteness theorem it holds also in $V,$ what finishes the proof. 
\end{proof}

In a similar way we can prove a lemma for meager sets.
\begin{mylemma}
 Let $A$ be a meager set. We can find a perfect set $P$ such that for every $n$
 $$ A+\underbrace{P+P+\cdots +P}_n\in \mc{M}.
 $$
\end{mylemma}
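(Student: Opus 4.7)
The plan is to mimic the argument just given for null sets, replacing $\mc{N}$ by $\mc{M}$ and the cardinal $add(\mc{N})$ by $add(\mc{M})$ throughout. Without loss of generality we may assume $A$ is a Borel (say $F_\sigma$) meager set, since enlarging $A$ only strengthens the conclusion. Call the ground universe $V$ and extend it via a suitable forcing to $V'$ in which $add(\mc{M})=\omega_3$.

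Working in $V'$, the plan is to build a decreasing sequence of perfect sets $P_0 \supseteq P_1 \supseteq P_2 \supseteq \cdots$ whose intersection is perfect, and such that at stage $n$ the set $A_n = A+\underbrace{P_n+\cdots+P_n}_{n}$ is meager. The base step proceeds as follows. Fix $X\subseteq \RR$ of cardinality $\omega_2$. Because $A+X = \bigcup_{x\in X}(A+x)$ is a union of $\omega_2 < add(\mc{M})$ meager sets, it is meager, hence contained in some meager $F_\sigma$ set $B$. The set $\{x:\ x+A\subseteq B\}$ is coanalytic, contains $X$, and therefore has cardinality $\omega_2 > \omega_1$, so by the classical perfect set theorem for coanalytic sets it contains a perfect set $P_0$; then $A+P_0\subseteq B$ is meager. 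The inductive step is identical in spirit: starting from a meager $A_n$ and a perfect set $P_n$, apply the same argument inside $P_n$ to find a perfect $P_{n+1}\subseteq P_n$ with $A+P_{n+1}+\cdots+P_{n+1}$ ($n+1$ times) meager, while arranging that every splitting node of $P_n$ at the first $n+1$ levels remains a splitting node of $P_{n+1}$ so that $P = \bigcap_n P_n$ is perfect.

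The conclusion then follows from Shoenfield absoluteness. In $V'$ we have the $\Sigma^1_2$ sentence
$$
(\exists P\in Perf)(\exists B)(\forall n)(\forall x)(\forall a)(\forall b)\bigl(B\text{ is meager }F_\sigma \wedge (a\in A \wedge b\notin B \wedge x_0,\ldots,x_n\in P \to a+x_0+\cdots+x_n\neq b)\bigr),
$$
where $x_0,x_1,\ldots,x_n$ are coded by $x$ exactly as in the null case. Since this sentence is $\Sigma^1_2$ and holds in $V'$, by Shoenfield absoluteness it holds in $V$ as well, yielding the required perfect $P\in V$.

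The main obstacle is essentially the same one present in the null version: guaranteeing that at each inductive step one can pick the next perfect set while controlling both its splitting structure (so the limit $P$ is perfect) and the meagerness of the growing sum $A_{n+1}$. The coanalytic perfect set theorem handles the existence, and $add(\mc{M})\ge\omega_3$ is exactly what is needed to ensure that the appropriate $\omega_2$-sized unions remain meager. Once these are in place, the $\Sigma^1_2$ reflection step is completely parallel to the argument for null sets.
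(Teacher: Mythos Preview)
Your proposal is correct and follows exactly the approach the paper intends: the paper's own ``proof'' of this lemma consists of the single sentence ``In a similar way we can prove a lemma for meager sets,'' referring back to the null-set version, and you have carried out precisely that translation, replacing $\mc{N}$ by $\mc{M}$, $add(\mc{N})$ by $add(\mc{M})$, and null $G_\delta$ covers by meager $F_\sigma$ covers, with the same inductive construction of $P_0\supseteq P_1\supseteq\cdots$ and the same $\Sigma^1_2$ absoluteness reflection at the end. One small wording remark: the phrase ``the classical perfect set theorem for coanalytic sets'' is slightly imprecise, since PSP for $\Pi^1_1$ is not a ZFC theorem; what is actually being used (here and in the paper's null-set proof) is that any $\Pi^1_1$ set of cardinality ${>}\omega_1$ contains a perfect set, which follows from the decomposition of a coanalytic set into $\aleph_1$ Borel sets.
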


The following theorem follows from a stronger assertion that for L - Luzin set and S - Sierpi\'nski set a set $L\times S$ is Menger (see \cite{Babin}), and so is $L+S$ and Menger sets are not Bernstein. We give a simple proof of this statement.
\begin{myth}
There are no Luzin set $L$ and Sierpi\'nski set $S$ such that $L+S$ is a Bernstein set.
\end{myth}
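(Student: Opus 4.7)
The plan is to follow the outline sketched by the authors right before the theorem: show that $L+S$ has the Menger covering property, and then show that a Menger subset of $\RR$ cannot be Bernstein. The first ingredient is the result of Babinkostova \cite{Babin} that for any Luzin set $L$ and Sierpi\'nski set $S$ the product $L\times S\subseteq \RR^2$ is Menger, which I would simply invoke. Since the Menger property is preserved by continuous surjections and the addition map $\sigma\colon \RR^2\to\RR$, $\sigma(x,y)=x+y$, is continuous, $L+S=\sigma(L\times S)$ is a Menger subspace of $\RR$.

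So it suffices to show that no Menger subset $B$ of $\RR$ is Bernstein. Passing to the irrationals, $B':=B\setminus\QQ$ remains Menger (removing countably many points preserves the Menger property) and it still meets every uncountable closed subset of $\RR\setminus\QQ$, because every such set contains a Cantor subset of $\RR$ that a Bernstein set must hit. Via the standard homeomorphism $\RR\setminus\QQ\cong \omega^\omega$ I transport $B'$ to a subspace $B''\subseteq \omega^\omega$ that is Menger and meets every compact perfect subset of the Baire space.

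Finally, I apply the Menger property to the sequence of open covers $\mathcal U_n=\{U_{n,k}:k\in\omega\}$, where $U_{n,k}=\{g\in\omega^\omega:g(n)<k\}$. This produces finite subfamilies $\mathcal V_n\subseteq \mathcal U_n$ with $B''\subseteq\bigcup_n\bigcup\mathcal V_n$. Setting $h(n)=\max\{k\in\omega:U_{n,k}\in\mathcal V_n\}$, every $g\in B''$ satisfies $g(n)<h(n)$ for some $n$. But the compact perfect set $K:=\prod_n\{h(n),h(n)+1\}\subseteq\omega^\omega$, homeomorphic to $2^\omega$, consists entirely of functions $g$ with $g(n)\ge h(n)$ for all $n$, so $K\cap B''=\emptyset$, contradicting the conclusion that $B''$ meets every compact perfect subset of $\omega^\omega$.

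I expect the main obstacle to lie outside the proof itself, namely in the invoked Babinkostova product theorem, which is the only non-routine ingredient; the Menger-versus-Bernstein implication is then delivered by a single well-chosen sequence of clopen covers in the Baire space, together with the routine passage through the irrationals.
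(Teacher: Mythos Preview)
Your overall strategy is exactly the one the paper mentions (and then deliberately bypasses): cite Babinkostova--Scheepers to get that $L\times S$ is Menger, push forward along addition, and then argue that a Menger subset of $\RR$ cannot be Bernstein. The last step, however, contains a genuine slip. You write that ``removing countably many points preserves the Menger property''; this is false. The line $\RR$ is $\sigma$-compact, hence Menger, while $\RR\setminus\QQ\cong\omega^\omega$ is the paradigm of a non-Menger space. So passing from $B$ to $B\setminus\QQ$ is not justified, and the rest of the argument, which lives in $\omega^\omega$, does not get off the ground.

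The repair is easy and keeps your argument intact. Since $B=L+S$ is assumed Bernstein, its complement is dense; choose a countable dense $D\subseteq\RR\setminus B$. Then $B\subseteq\RR\setminus D$, and $\RR\setminus D$ is homeomorphic to $\omega^\omega$. No points of $B$ are discarded, so $B$ itself (not a proper subset) is carried to a Menger subspace $B''\subseteq\omega^\omega$, and your cover argument with $\mathcal U_n=\{\,\{g:g(n)<k\}:k\in\omega\}$ now legitimately produces $h\in\omega^\omega$ and a compact perfect $K=\prod_n\{h(n),h(n)+1\}$ disjoint from $B''$; its pullback is a perfect subset of $\RR$ missed by $B$, contradicting Bernstein.

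For comparison, the paper's own proof avoids the Menger machinery entirely. Using Lemma~\ref{invariantcomeager} it fixes a comeager null set $R$ and a perfect nowhere dense null set $P$ with $R+P\subseteq R$, sets $A=-R$ and $B=-A^{c}$, and decomposes
\[
L+S=\bigl((L\cap A)+(S\cap B)\bigr)\cup\bigl((L\cap A)+(S\cap B^{c})\bigr)\cup\bigl((L\cap A^{c})+(S\cap B)\bigr)\cup\bigl((L\cap A^{c})+(S\cap B^{c})\bigr).
\]
The first piece lies in $A+B\subseteq P^{c}$; the remaining three are, respectively, Luzin, Sierpi\'nski, and countable, so each meets the nowhere dense null set $P$ only countably. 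Hence some perfect $P'\subseteq P$ misses $L+S$. This argument is self-contained and elementary, at the cost of the explicit construction in Lemma~\ref{invariantcomeager}; your route is shorter once the Babinkostova--Scheepers theorem is granted, but leans on that external result.
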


\begin{proof}
Let $L$ and $S$ be Luzin and Sierpi\'nski sets respectively and let $R$ and $P$ be sets as in Lemma \ref{invariantcomeager}. Let us denote $A=-R$ and $B=-A^{c}$. Then $A-P\subset A$ and so for any $p\in P$ we have $(A-p)\cap(-B)=\emptyset$. It follows that for every $a\in A$ and $b\in B$ $a-p\neq -b$, hence $a+b\neq p$ and therefore $P\subset (A+B)^{c}$. We will show that $L+S$ also contains some perfect set.
\begin{eqnarray*}
L+S&=&((L\cap A)\cup(L\cap A^{c}))+((S\cap B)\cup(S\cap B^{c}))
\\
& = &((L\cap A)+(S\cap B))\cup((L\cap A)+(S\cap B^{c}))\cup
\\
&&\cup((L\cap A^{c})+(S\cap B))\cup((L\cap A^{c})+(S\cap B^{c}))
\end{eqnarray*}
$(L\cap A)+(S\cap B)\subset A+B$ and sets $(L\cap A)+(S\cap B^{c})$, $(L\cap A^{c})+(S\cap B)$ and $(L\cap A^{c})+(S\cap B^{c})$ are Luzin, Sierpi\'nski and countable respectively, so they have countable intersection with $P$. It follows that there exists a perfect set $P'\subset P$ such that $(L+S)\cap P'=\emptyset$ and therefore $L+S$ cannot be a Bernstein set.

\end{proof}

The last theorem clearly implies that:
\begin{mycoro}
\label{l+snieprosta}
There are no Luzin set $L$ and Sierpi\'nski set $S$ such that $L+S=\RR$.
\end{mycoro}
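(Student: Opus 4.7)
The plan is to observe that the corollary is essentially immediate from the preceding theorem, and in fact from its proof rather than only its statement. The previous theorem asserts that $L+S$ is not Bernstein, but the argument establishes the stronger fact that $L+S$ is disjoint from a perfect set $P'$. Since a perfect set is in particular nonempty, this already rules out $L+S = \RR$.

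More concretely, I would proceed as follows. Suppose toward a contradiction that $L$ is a Luzin set, $S$ a Sierpi\'nski set, and $L+S=\RR$. Apply Lemma \ref{invariantcomeager} to obtain a comeager null set $R$ and a perfect nowhere dense null set $P$ with $R+P\subset R$. Put $A=-R$ and $B=-A^{c}$; just as in the previous proof, one checks that $P\subset(A+B)^{c}$. Decompose
\begin{eqnarray*}
L+S & = & ((L\cap A)+(S\cap B))\cup((L\cap A)+(S\cap B^{c}))\cup \\
    &   & \cup((L\cap A^{c})+(S\cap B))\cup((L\cap A^{c})+(S\cap B^{c})).
\end{eqnarray*}
The first summand is contained in $A+B$, hence disjoint from $P$. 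The remaining three summands are respectively a Luzin set (contained in $L$, since $S\cap B^{c}$ is a subset of the null set $-B=A^{c}$, wait—let me redo this), and here one uses that $P$ is null and meager: intersections of $P$ with a Luzin set and with a Sierpi\'nski set are countable, and the last summand is countable outright. Pruning these countably many points from $P$ yields a perfect subset $P'\subset P$ with $(L+S)\cap P'=\emptyset$.

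In particular, any point of $P'$ lies outside $L+S$, so $L+S\neq\RR$, which is the desired contradiction. There is no real obstacle here; the work was already done in the preceding theorem, and the only thing to make explicit is that the perfect set $P'$ witnesses not merely the failure of Bernstein-ness but the failure of surjectivity of the Minkowski sum. Thus the statement is a direct corollary, and I would simply write: "If $L+S=\RR$, then in particular $L+S\supset P'$, contradicting the conclusion $(L+S)\cap P'=\emptyset$ obtained in the proof of the previous theorem."
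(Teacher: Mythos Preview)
Your proposal is correct and matches the paper's approach: the paper simply states that the corollary ``clearly'' follows from the preceding theorem, and you have made explicit precisely the right point, namely that the proof (not merely the statement) of that theorem produces a perfect set $P'$ disjoint from $L+S$, which immediately gives $L+S\neq\RR$. Aside from the self-correcting aside in the middle (which you should clean up), the argument is exactly what is intended.
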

This result also follows from the fact that for every Luzin set $L$ and a meager set $M$, $L+M\neq\RR$ (see \cite{Gal}) and for every Sierpi\'nski set $S$ and a null set $N$, $L+N\neq\RR$ (see \cite{Paw}).

\end{document}